\newtheorem{theorem}{Theorem}
\theoremstyle{plain}
\newtheorem{axiom}{Axiom}
\newtheorem{conjecture}{Conjecture}
\newtheorem{corollary}{Corollary}
\newtheorem{definition}{Definition}
\newtheorem{example}{Example}
\newtheorem{exercise}{Exercise}
\newtheorem{lemma}{Lemma}
\newtheorem{proposition}{Proposition}
\newtheorem{remark}{Remark}
\numberwithin{equation}{section}
\numberwithin{theorem}{section}
\numberwithin{algorithm}{section}
\numberwithin{axiom}{section}
\numberwithin{case}{section}
\numberwithin{claim}{section}
\numberwithin{conclusion}{section}
\numberwithin{condition}{section}
\numberwithin{conjecture}{section}
\numberwithin{corollary}{section}
\numberwithin{criterion}{section}
\numberwithin{definition}{section}
\numberwithin{example}{section}
\numberwithin{exercise}{section}
\numberwithin{lemma}{section}
\numberwithin{notation}{section}
\numberwithin{problem}{section}
\numberwithin{proposition}{section}
\numberwithin{remark}{section}
\numberwithin{solution}{section}
\chardef\@x10\chardef\@xv60
\def\tcitime{
\def\@time{%
  \@minute\time\@hour\@minute\divide\@hour\@xv
  \ifnum\@hour<\@x 0\fi\the\@hour:%
  \multiply\@hour\@xv\advance\@minute-\@hour
  \ifnum\@minute<\@x 0\fi\the\@minute
  }}%
\def\QCTOpt[#1]#2{%
  \def\QCTOptB{#1}
  \def\QCTOptA{#2}
}
\def\QCTNOpt#1{%
  \def\QCTOptA{#1}
  \let\QCTOptB\empty
}
\def\Qct{%
  \@ifnextchar[{%
    \QCTOpt}{\QCTNOpt}
}
\def\QCBOpt[#1]#2{%
  \def\QCBOptB{#1}
  \def\QCBOptA{#2}
}
\def\QCBNOpt#1{%
  \def\QCBOptA{#1}
  \let\QCBOptB\empty
}
\def\Qcb{%
  \@ifnextchar[{%
    \QCBOpt}{\QCBNOpt}
}
\def\PrepCapArgs{%
  \ifx\QCBOptA\empty
    \ifx\QCTOptA\empty
      {}%
    \else
      \ifx\QCTOptB\empty
        {\QCTOptA}%
      \else
        [\QCTOptB]{\QCTOptA}%
      \fi
    \fi
  \else
    \ifx\QCBOptA\empty
      {}%
    \else
      \ifx\QCBOptB\empty
        {\QCBOptA}%
      \else
        [\QCBOptB]{\QCBOptA}%
      \fi
    \fi
  \fi
}
\def\GRAPHICSPS#1{%
 \ifcase\GRAPHICSTYPE
   \special{ps: #1}%
 \or
   \special{language "PS", include "#1"}%
 \fi
}%
\def\graffile#1#2#3#4{%
    \bgroup
    \leavevmode
    \@ifundefined{bbl@deactivate}{\def~{\string~}}{\activesoff}
    \raise -#4 \BOXTHEFRAME{%
        \hbox to #2{\raise #3\hbox to #2{\null #1\hfil}}}%
    \egroup
}%
\def\draftbox#1#2#3#4{%
 \leavevmode\raise -#4 \hbox{%
  \frame{\rlap{\protect\tiny #1}\hbox to #2%
   {\vrule height#3 width\z@ depth\z@\hfil}%
  }%
 }%
}%
\newif\ifwasdraft
\def\GRAPHIC#1#2#3#4#5{%
 \ifnum\draft=\@ne\draftbox{#2}{#3}{#4}{#5}%
  \else\graffile{#1}{#3}{#4}{#5}%
  \fi
 }%
\def\addtoLaTeXparams#1{%
    \edef\LaTeXparams{\LaTeXparams #1}}%
\newif\ifBoxFrame \BoxFramefalse
\newif\ifOverFrame \OverFramefalse
\newif\ifUnderFrame \UnderFramefalse
\def\BOXTHEFRAME#1{%
   \hbox{%
      \ifBoxFrame
         \frame{#1}%
      \else
         {#1}%
      \fi
   }%
}
\def\doFRAMEparams#1{\BoxFramefalse\OverFramefalse\UnderFramefalse\readFRAMEparams#1\end}%
\def\readFRAMEparams#1{%
 \ifx#1\end%
  \let\next=\relax
  \else
  \ifx#1i\dispkind=\z@\fi
  \ifx#1d\dispkind=\@ne\fi
  \ifx#1f\dispkind=\tw@\fi
  \ifx#1t\addtoLaTeXparams{t}\fi
  \ifx#1b\addtoLaTeXparams{b}\fi
  \ifx#1p\addtoLaTeXparams{p}\fi
  \ifx#1h\addtoLaTeXparams{h}\fi
  \ifx#1X\BoxFrametrue\fi
  \ifx#1O\OverFrametrue\fi
  \ifx#1U\UnderFrametrue\fi
  \ifx#1w
    \ifnum\draft=1\wasdrafttrue\else\wasdraftfalse\fi
    \draft=\@ne
  \fi
  \let\next=\readFRAMEparams
  \fi
 \next
 }%
\def\IFRAME#1#2#3#4#5#6{%
      \bgroup
      \let\QCTOptA\empty
      \let\QCTOptB\empty
      \let\QCBOptA\empty
      \let\QCBOptB\empty
      #6%
      \parindent=0pt%
      \leftskip=0pt
      \rightskip=0pt
      \setbox0 = \hbox{\QCBOptA}%
      \@tempdima = #1\relax
      \ifOverFrame
          \typeout{This is not implemented yet}%
          \show\HELP
      \else
         \ifdim\wd0>\@tempdima
            \advance\@tempdima by \@tempdima
            \ifdim\wd0 >\@tempdima
               \textwidth=\@tempdima
               \setbox1 =\vbox{%
                  \noindent\hbox to \@tempdima{\hfill\GRAPHIC{#5}{#4}{#1}{#2}{#3}\hfill}\\%
                  \noindent\hbox to \@tempdima{\parbox[b]{\@tempdima}{\QCBOptA}}%
               }%
               \wd1=\@tempdima
            \else
               \textwidth=\wd0
               \setbox1 =\vbox{%
                 \noindent\hbox to \wd0{\hfill\GRAPHIC{#5}{#4}{#1}{#2}{#3}\hfill}\\%
                 \noindent\hbox{\QCBOptA}%
               }%
               \wd1=\wd0
            \fi
         \else
            \ifdim\wd0>0pt
              \hsize=\@tempdima
              \setbox1 =\vbox{%
                \unskip\GRAPHIC{#5}{#4}{#1}{#2}{0pt}%
                \break
                \unskip\hbox to \@tempdima{\hfill \QCBOptA\hfill}%
              }%
              \wd1=\@tempdima
           \else
              \hsize=\@tempdima
              \setbox1 =\vbox{%
                \unskip\GRAPHIC{#5}{#4}{#1}{#2}{0pt}%
              }%
              \wd1=\@tempdima
           \fi
         \fi
         \@tempdimb=\ht1
         \advance\@tempdimb by \dp1
         \advance\@tempdimb by -#2%
         \advance\@tempdimb by #3%
         \leavevmode
         \raise -\@tempdimb \hbox{\box1}%
      \fi
      \egroup%
}%
\def\DFRAME#1#2#3#4#5{%
 \begin{center}
     \let\QCTOptA\empty
     \let\QCTOptB\empty
     \let\QCBOptA\empty
     \let\QCBOptB\empty
     \ifOverFrame 
        #5\QCTOptA\par
     \fi
     \GRAPHIC{#4}{#3}{#1}{#2}{\z@}
     \ifUnderFrame 
        \nobreak\par\nobreak#5\QCBOptA
     \fi
 \end{center}%
 }%
\def\FFRAME#1#2#3#4#5#6#7{%
 \begin{figure}[#1]%
  \let\QCTOptA\empty
  \let\QCTOptB\empty
  \let\QCBOptA\empty
  \let\QCBOptB\empty
  \ifOverFrame
    #4
    \ifx\QCTOptA\empty
    \else
      \ifx\QCTOptB\empty
        \caption{\QCTOptA}%
      \else
        \caption[\QCTOptB]{\QCTOptA}%
      \fi
    \fi
    \ifUnderFrame\else
      \label{#5}%
    \fi
  \else
    \UnderFrametrue%
  \fi
  \begin{center}\GRAPHIC{#7}{#6}{#2}{#3}{\z@}\end{center}%
  \ifUnderFrame
    #4
    \ifx\QCBOptA\empty
      \caption{}%
    \else
      \ifx\QCBOptB\empty
        \caption{\QCBOptA}%
      \else
        \caption[\QCBOptB]{\QCBOptA}%
      \fi
    \fi
    \label{#5}%
  \fi
  \end{figure}%
 }%
\def\makeactives{
  \catcode`\"=\active
  \catcode`\;=\active
  \catcode`\:=\active
  \catcode`\'=\active
  \catcode`\~=\active
}
   \gdef\activesoff{%
      \def"{\string"}
      \def;{\string;}
      \def:{\string:}
      \def'{\string'}
      \def~{\string~}
    }
\def\FRAME#1#2#3#4#5#6#7#8{%
 \bgroup
 \ifnum\draft=\@ne
   \wasdrafttrue
 \else
   \wasdraftfalse%
 \fi
 \def\LaTeXparams{}%
 \dispkind=\z@
 \def\LaTeXparams{}%
 \doFRAMEparams{#1}%
 \ifnum\dispkind=\z@\IFRAME{#2}{#3}{#4}{#7}{#8}{#5}\else
  \ifnum\dispkind=\@ne\DFRAME{#2}{#3}{#7}{#8}{#5}\else
   \ifnum\dispkind=\tw@
    \edef\@tempa{\noexpand\FFRAME{\LaTeXparams}}%
    \@tempa{#2}{#3}{#5}{#6}{#7}{#8}%
    \fi
   \fi
  \fi
  \ifwasdraft\draft=1\else\draft=0\fi{}%
  \egroup
 }%
\def\TEXUX#1{"texux"}
\def\func#1{\mathop{\rm #1}\nolimits}%
\long\def\QQQ#1#2{%
     \long\expandafter\def\csname#1\endcsname{#2}}%
\long\def\QQA#1#2{}%
\def\QTR#1#2{{\csname#1\endcsname #2}}
\def\EXPAND#1[#2]#3{}%
\def\NOEXPAND#1[#2]#3{}%
\def\LaTeXparent#1{}%
\def\ChildStyles#1{}%
\def\ChildDefaults#1{}%
\def\QTagDef#1#2#3{}%
  \providecommand{\UNICODE}[2][]{}
\def\QQfnmark#1{\footnotemark}
 \def\abstract{%
  \if@twocolumn
   \section*{Abstract (Not appropriate in this style!)}%
   \else \small 
   \begin{center}{\bf Abstract\vspace{-.5em}\vspace{\z@}}\end{center}%
   \quotation 
   \fi
  }%
   \def\registered{\relax\ifmmode{}\r@gistered
                    \else$\m@th\r@gistered$\fi}%
 \def\r@gistered{^{\ooalign
  {\hfil\raise.07ex\hbox{$\scriptstyle\rm\text{R}$}\hfil\crcr
  \mathhexbox20D}}}}{}%
\newdimen\theight
\def\Column{%
 \vadjust{\setbox\z@=\hbox{\scriptsize\quad\quad tcol}%
  \theight=\ht\z@\advance\theight by \dp\z@\advance\theight by \lineskip
  \kern -\theight \vbox to \theight{%
   \rightline{\rlap{\box\z@}}%
   \vss
   }%
  }%
 }%
\def\qed{%
 \ifhmode\unskip\nobreak\fi\ifmmode\ifinner\else\hskip5\p@\fi\fi
 \hbox{\hskip5\p@\vrule width4\p@ height6\p@ depth1.5\p@\hskip\p@}%
 }%
\def\miss{\hbox{\vrule height2\p@ width 2\p@ depth\z@}}%
\def\tcol#1{{\baselineskip=6\p@ \vcenter{#1}} \Column}  %
\def\newfmtname{LaTeX2e}
  \DeclareOldFontCommand{\rm}{\normalfont\rmfamily}{\mathrm}
  \DeclareOldFontCommand{\sf}{\normalfont\sffamily}{\mathsf}
  \DeclareOldFontCommand{\tt}{\normalfont\ttfamily}{\mathtt}
  \DeclareOldFontCommand{\bf}{\normalfont\bfseries}{\mathbf}
  \DeclareOldFontCommand{\it}{\normalfont\itshape}{\mathit}
  \DeclareOldFontCommand{\sl}{\normalfont\slshape}{\@nomath\sl}
  \DeclareOldFontCommand{\sc}{\normalfont\scshape}{\@nomath\sc}
\def\alpha{{\Greekmath 010B}}%
\def\beta{{\Greekmath 010C}}%
\def\gamma{{\Greekmath 010D}}%
\def\delta{{\Greekmath 010E}}%
\def\epsilon{{\Greekmath 010F}}%
\def\zeta{{\Greekmath 0110}}%
\def\eta{{\Greekmath 0111}}%
\def\theta{{\Greekmath 0112}}%
\def\iota{{\Greekmath 0113}}%
\def\kappa{{\Greekmath 0114}}%
\def\lambda{{\Greekmath 0115}}%
\def\mu{{\Greekmath 0116}}%
\def\nu{{\Greekmath 0117}}%
\def\xi{{\Greekmath 0118}}%
\def\pi{{\Greekmath 0119}}%
\def\rho{{\Greekmath 011A}}%
\def\sigma{{\Greekmath 011B}}%
\def\tau{{\Greekmath 011C}}%
\def\upsilon{{\Greekmath 011D}}%
\def\phi{{\Greekmath 011E}}%
\def\chi{{\Greekmath 011F}}%
\def\psi{{\Greekmath 0120}}%
\def\omega{{\Greekmath 0121}}%
\def\varepsilon{{\Greekmath 0122}}%
\def\vartheta{{\Greekmath 0123}}%
\def\varpi{{\Greekmath 0124}}%
\def\varrho{{\Greekmath 0125}}%
\def\varsigma{{\Greekmath 0126}}%
\def\varphi{{\Greekmath 0127}}%
\def\nabla{{\Greekmath 0272}}
\def\FindBoldGroup{%
   {\setbox0=\hbox{$\mathbf{x\global\edef\theboldgroup{\the\mathgroup}}$}}%
}
\def\Greekmath#1#2#3#4{%
    \if@compatibility
        \ifnum\mathgroup=\symbold
           \mathchoice{\mbox{\boldmath$\displaystyle\mathchar"#1#2#3#4$}}%
                      {\mbox{\boldmath$\textstyle\mathchar"#1#2#3#4$}}%
                      {\mbox{\boldmath$\scriptstyle\mathchar"#1#2#3#4$}}%
                      {\mbox{\boldmath$\scriptscriptstyle\mathchar"#1#2#3#4$}}%
        \else
           \mathchar"#1#2#3#4%
        \fi 
    \else 
        \FindBoldGroup
        \ifnum\mathgroup=\theboldgroup 
           \mathchoice{\mbox{\boldmath$\displaystyle\mathchar"#1#2#3#4$}}%
                      {\mbox{\boldmath$\textstyle\mathchar"#1#2#3#4$}}%
                      {\mbox{\boldmath$\scriptstyle\mathchar"#1#2#3#4$}}%
                      {\mbox{\boldmath$\scriptscriptstyle\mathchar"#1#2#3#4$}}%
        \else
           \mathchar"#1#2#3#4%
        \fi     	    
	  \fi}
\newif\ifGreekBold  \GreekBoldfalse
\let\SAVEPBF=\pbf
\def\pbf{\GreekBoldtrue\SAVEPBF}%
  \newcounter{equationnumber}  
  \def\mathletters{%
     \addtocounter{equation}{1}
     \edef\@currentlabel{\theequation}%
     \setcounter{equationnumber}{\c@equation}
     \setcounter{equation}{0}%
     \edef\theequation{\@currentlabel\noexpand\alph{equation}}%
  }
    \def\BibTeX{{\rm B\kern-.05em{\sc i\kern-.025em b}\kern-.08em
                 T\kern-.1667em\lower.7ex\hbox{E}\kern-.125emX}}}{}%
\def\AmS{{\protect\usefont{OMS}{cmsy}{m}{n}%
                A\kern-.1667em\lower.5ex\hbox{M}\kern-.125emS}}}{}%
\def\@@eqncr{\let\@tempa\relax
    \ifcase\@eqcnt \def\@tempa{& & &}\or \def\@tempa{& &}%
      \else \def\@tempa{&}\fi
     \@tempa
     \if@eqnsw
        \iftag@
           \@taggnum
        \else
           \@eqnnum\stepcounter{equation}%
        \fi
     \fi
     \global\tag@false
     \global\@eqnswtrue
     \global\@eqcnt\z@\cr}
\def\TCItag{\@ifnextchar*{\@TCItagstar}{\@TCItag}}
\def\@TCItag#1{%
    \global\tag@true
    \global\def\@taggnum{(#1)}}
\def\@TCItagstar*#1{%
    \global\tag@true
    \global\def\@taggnum{#1}}
\let\DOTSI\relax
\def\RIfM@{\relax\ifmmode}%
\def\FN@{\futurelet\next}%
\def\iint{\DOTSI\intno@\tw@\FN@\ints@}%
\def\iiint{\DOTSI\intno@\thr@@\FN@\ints@}%
\def\iiiint{\DOTSI\intno@4 \FN@\ints@}%
\def\idotsint{\DOTSI\intno@\z@\FN@\ints@}%
\def\ints@{\findlimits@\ints@@}%
\newif\iflimtoken@
\newif\iflimits@
\def\findlimits@{\limtoken@true\ifx\next\limits\limits@true
 \else\ifx\next\nolimits\limits@false\else
 \limtoken@false\ifx\ilimits@\nolimits\limits@false\else
 \ifinner\limits@false\else\limits@true\fi\fi\fi\fi}%
\def\multint@{\int\ifnum\intno@=\z@\intdots@                          
 \else\intkern@\fi                                                    
 \ifnum\intno@>\tw@\int\intkern@\fi                                   
 \ifnum\intno@>\thr@@\int\intkern@\fi                                 
 \int}
\def\multintlimits@{\intop\ifnum\intno@=\z@\intdots@\else\intkern@\fi
 \ifnum\intno@>\tw@\intop\intkern@\fi
 \ifnum\intno@>\thr@@\intop\intkern@\fi\intop}%
\def\intic@{%
    \mathchoice{\hskip.5em}{\hskip.4em}{\hskip.4em}{\hskip.4em}}%
\def\negintic@{\mathchoice
 {\hskip-.5em}{\hskip-.4em}{\hskip-.4em}{\hskip-.4em}}%
\def\ints@@{\iflimtoken@                                              
 \def\ints@@@{\iflimits@\negintic@
   \mathop{\intic@\multintlimits@}\limits                             
  \else\multint@\nolimits\fi                                          
  \eat@}
 \else                                                                
 \def\ints@@@{\iflimits@\negintic@
  \mathop{\intic@\multintlimits@}\limits\else
  \multint@\nolimits\fi}\fi\ints@@@}%
\def\intkern@{\mathchoice{\!\!\!}{\!\!}{\!\!}{\!\!}}%
\def\plaincdots@{\mathinner{\cdotp\cdotp\cdotp}}%
\def\intdots@{\mathchoice{\plaincdots@}%
 {{\cdotp}\mkern1.5mu{\cdotp}\mkern1.5mu{\cdotp}}%
 {{\cdotp}\mkern1mu{\cdotp}\mkern1mu{\cdotp}}%
 {{\cdotp}\mkern1mu{\cdotp}\mkern1mu{\cdotp}}}%
\def\RIfM@{\relax\protect\ifmmode}
\def\text{\RIfM@\expandafter\text@\else\expandafter\mbox\fi}
\let\nfss@text\text
\def\text@#1{\mathchoice
   {\textdef@\displaystyle\f@size{#1}}%
   {\textdef@\textstyle\tf@size{\firstchoice@false #1}}%
   {\textdef@\textstyle\sf@size{\firstchoice@false #1}}%
   {\textdef@\textstyle \ssf@size{\firstchoice@false #1}}%
   \glb@settings}
\def\textdef@#1#2#3{\hbox{{%
                    \everymath{#1}%
                    \let\f@size#2\selectfont
                    #3}}}
\newif\iffirstchoice@
\def\Let@{\relax\iffalse{\fi\let\\=\cr\iffalse}\fi}%
\def\vspace@{\def\vspace##1{\crcr\noalign{\vskip##1\relax}}}%
\def\multilimits@{\bgroup\vspace@\Let@
 \baselineskip\fontdimen10 \scriptfont\tw@
 \advance\baselineskip\fontdimen12 \scriptfont\tw@
 \lineskip\thr@@\fontdimen8 \scriptfont\thr@@
 \lineskiplimit\lineskip
 \vbox\bgroup\ialign\bgroup\hfil$\m@th\scriptstyle{##}$\hfil\crcr}%
\def\Sb{_\multilimits@}%
\def\endSb{\crcr\egroup\egroup\egroup}%
\def\Sp{^\multilimits@}%
\newdimen\ex@
\def\rightarrowfill@#1{$#1\m@th\mathord-\mkern-6mu\cleaders
 \hbox{$#1\mkern-2mu\mathord-\mkern-2mu$}\hfill
 \mkern-6mu\mathord\rightarrow$}%
\def\leftarrowfill@#1{$#1\m@th\mathord\leftarrow\mkern-6mu\cleaders
 \hbox{$#1\mkern-2mu\mathord-\mkern-2mu$}\hfill\mkern-6mu\mathord-$}%
\def\leftrightarrowfill@#1{$#1\m@th\mathord\leftarrow
\mkern-6mu\cleaders
 \hbox{$#1\mkern-2mu\mathord-\mkern-2mu$}\hfill
 \mkern-6mu\mathord\rightarrow$}%
\def\overrightarrow{\mathpalette\overrightarrow@}%
\def\overrightarrow@#1#2{\vbox{\ialign{##\crcr\rightarrowfill@#1\crcr
 \noalign{\kern-\ex@\nointerlineskip}$\m@th\hfil#1#2\hfil$\crcr}}}%
\def\overleftarrow{\mathpalette\overleftarrow@}%
\def\overleftarrow@#1#2{\vbox{\ialign{##\crcr\leftarrowfill@#1\crcr
 \noalign{\kern-\ex@\nointerlineskip}$\m@th\hfil#1#2\hfil$\crcr}}}%
\def\overleftrightarrow{\mathpalette\overleftrightarrow@}%
\def\overleftrightarrow@#1#2{\vbox{\ialign{##\crcr
   \leftrightarrowfill@#1\crcr
 \noalign{\kern-\ex@\nointerlineskip}$\m@th\hfil#1#2\hfil$\crcr}}}%
\def\underrightarrow{\mathpalette\underrightarrow@}%
\def\underrightarrow@#1#2{\vtop{\ialign{##\crcr$\m@th\hfil#1#2\hfil
  $\crcr\noalign{\nointerlineskip}\rightarrowfill@#1\crcr}}}%
\def\underleftarrow{\mathpalette\underleftarrow@}%
\def\underleftarrow@#1#2{\vtop{\ialign{##\crcr$\m@th\hfil#1#2\hfil
  $\crcr\noalign{\nointerlineskip}\leftarrowfill@#1\crcr}}}%
\def\underleftrightarrow{\mathpalette\underleftrightarrow@}%
\def\underleftrightarrow@#1#2{\vtop{\ialign{##\crcr$\m@th
  \hfil#1#2\hfil$\crcr
 \noalign{\nointerlineskip}\leftrightarrowfill@#1\crcr}}}%
\def\qopnamewl@#1{\mathop{\operator@font#1}\nlimits@}
\let\nlimits@\displaylimits
\def\setboxz@h{\setbox\z@\hbox}
\def\varlim@#1#2{\mathop{\vtop{\ialign{##\crcr
 \hfil$#1\m@th\operator@font lim$\hfil\crcr
 \noalign{\nointerlineskip}#2#1\crcr
 \noalign{\nointerlineskip\kern-\ex@}\crcr}}}}
 \def\rightarrowfill@#1{\m@th\setboxz@h{$#1-$}\ht\z@\z@
  $#1\copy\z@\mkern-6mu\cleaders
  \hbox{$#1\mkern-2mu\box\z@\mkern-2mu$}\hfill
  \mkern-6mu\mathord\rightarrow$}
\def\leftarrowfill@#1{\m@th\setboxz@h{$#1-$}\ht\z@\z@
  $#1\mathord\leftarrow\mkern-6mu\cleaders
  \hbox{$#1\mkern-2mu\copy\z@\mkern-2mu$}\hfill
  \mkern-6mu\box\z@$}
\def\projlim{\qopnamewl@{proj\,lim}}
\def\injlim{\qopnamewl@{inj\,lim}}
\def\varinjlim{\mathpalette\varlim@\rightarrowfill@}
\def\varprojlim{\mathpalette\varlim@\leftarrowfill@}
\def\varliminf{\mathpalette\varliminf@{}}
\def\varliminf@#1{\mathop{\underline{\vrule\@depth.2\ex@\@width\z@
   \hbox{$#1\m@th\operator@font lim$}}}}
\def\varlimsup{\mathpalette\varlimsup@{}}
\def\varlimsup@#1{\mathop{\overline
  {\hbox{$#1\m@th\operator@font lim$}}}}
\def\align{\@verbatim \frenchspacing\@vobeyspaces \@alignverbatim
You are using the "align" environment in a style in which it is not defined.}
\let\csname endalign*\endcsname =\endtrivlist
\def\alignat{\@verbatim \frenchspacing\@vobeyspaces \@alignatverbatim
You are using the "alignat" environment in a style in which it is not defined.}
\let\csname endalignat*\endcsname =\endtrivlist
\def\xalignat{\@verbatim \frenchspacing\@vobeyspaces \@xalignatverbatim
You are using the "xalignat" environment in a style in which it is not defined.}
\let\csname endxalignat*\endcsname =\endtrivlist
\def\gather{\@verbatim \frenchspacing\@vobeyspaces \@gatherverbatim
You are using the "gather" environment in a style in which it is not defined.}
\let\csname endgather*\endcsname =\endtrivlist
\def\multiline{\@verbatim \frenchspacing\@vobeyspaces \@multilineverbatim
You are using the "multiline" environment in a style in which it is not defined.}
\let\csname endmultiline*\endcsname =\endtrivlist
\def\arrax{\@verbatim \frenchspacing\@vobeyspaces \@arraxverbatim
You are using a type of "array" construct that is only allowed in AmS-LaTeX.}
\def\tabulax{\@verbatim \frenchspacing\@vobeyspaces \@tabulaxverbatim
You are using a type of "tabular" construct that is only allowed in AmS-LaTeX.}
\let\csname endarrax*\endcsname =\endtrivlist
\let\csname endtabulax*\endcsname =\endtrivlist
 \def\endequation{%
     \ifmmode\ifinner 
      \iftag@
        \addtocounter{equation}{-1} 
        $\hfil
           \displaywidth\linewidth\@taggnum\egroup \endtrivlist
        \global\tag@false
        \global\@ignoretrue   
      \else
        $\hfil
           \displaywidth\linewidth\@eqnnum\egroup \endtrivlist
        \global\tag@false
        \global\@ignoretrue 
      \fi
     \else   
      \iftag@
        \addtocounter{equation}{-1} 
        \eqno \hbox{\@taggnum}
        \global\tag@false%
        $$\global\@ignoretrue
      \else
        \eqno \hbox{\@eqnnum}
        $$\global\@ignoretrue
      \fi
     \fi\fi
 } 
 \newif\iftag@ \tag@false
 \def\TCItag{\@ifnextchar*{\@TCItagstar}{\@TCItag}}
 \def\@TCItag#1{%
     \global\tag@true
     \global\def\@taggnum{(#1)}}
 \def\@TCItagstar*#1{%
     \global\tag@true
     \global\def\@taggnum{#1}}
     \def\tag{\@ifnextchar*{\@tagstar}{\@tag}}
     \def\@tag#1{%
         \global\tag@true
         \global\def\@taggnum{(#1)}}
     \def\@tagstar*#1{%
         \global\tag@true
         \global\def\@taggnum{#1}}
\begin{document}
\title{Paneitz operator for metrics near $S^{3}$}
\author{Fengbo Hang}
\address{Courant Institute, New York University, 251 Mercer Street, New York
NY 10012}
\email{fengbo@cims.nyu.edu}
\author{Paul C. Yang}
\address{Department of Mathematics, Princeton University, Fine Hall,
Washington Road, Princeton NJ 08544}
\email{yang@math.princeton.edu}
\date{}

\begin{abstract}
We derive the first and second variation formula for the Green's function
pole's value of Paneitz operator on the standard three sphere. In particular
it is shown that the first variation vanishes and the second variation is
nonpositively definite. Moreover, the second variation vanishes only at the
direction of conformal deformation. We also introduce a new invariant of the
Paneitz operator and illustrate its close relation with the second
eigenvalue and Sobolev inequality of Paneitz operator.
\end{abstract}

\maketitle

\section{Introduction\label{sec1}}

The fourth order $Q$ curvature equation (\cite{Br,P}) has attracted interest
due to the successful study in dimension four and its application to
conformal geometry in dimension four (\cite{CGY}). We are interested to
possibly extend this analysis to dimension three. The effort to understand $%
Q $ curvature in dimension three motivates many intriguing and challenging
problems. In this dimension, the functions in $H^{2}$ are actually $\frac{1}{%
2}$-Holder continuous, and hence the Green's function has well defined value
at its pole. The sign of this value turns out to be an important issue. For
the standard sphere, the Green's function is nonpositive everywhere but
vanishes exactly at the pole. Our purpose in this article is to study this
question for the conformal structures near the standard sphere.

Recall on a three manifold, the $Q$ curvature is given by%
\begin{equation}
Q=-\frac{1}{4}\Delta R-2\left\vert Rc\right\vert ^{2}+\frac{23}{32}R^{2},
\label{eq1.1}
\end{equation}%
and the fourth order Paneitz operator is defined as%
\begin{equation}
P\varphi =\Delta ^{2}\varphi +4\func{div}\left( Rc\left( \nabla \varphi
,e_{i}\right) e_{i}\right) -\frac{5}{4}\func{div}\left( R\nabla \varphi
\right) -\frac{1}{2}Q\varphi .  \label{eq1.2}
\end{equation}%
Here $Rc$ is the Ricci curvature, $R$ is the scalar curvature and $%
e_{1},e_{2},e_{3}$ is a local orthonormal frame with respect to the metric.
For any positive smooth function $\rho $, the operator satisfies%
\begin{equation}
P_{\rho ^{-4}g}\varphi =\rho ^{7}P_{g}\left( \rho \varphi \right) .
\label{eq1.3}
\end{equation}%
As a consequence we know $\ker P_{g}=0\Leftrightarrow \ker P_{\rho ^{-4}g}=0$
and under this assumption, the Green's functions of $P$ satisfy the
transformation law%
\begin{equation}
G_{\rho ^{-4}g}\left( p,q\right) =\rho \left( p\right) ^{-1}\rho \left(
q\right) ^{-1}G_{g}\left( p,q\right) .  \label{eq1.4}
\end{equation}

Let $\left( M,g\right) $ be a smooth compact three dimensional Riemannian
manifold, for $u,v\in C^{\infty }\left( M\right) $, we write%
\begin{eqnarray}
E\left( u,v\right) &=&\int_{M}Pu\cdot vd\mu  \label{eq1.5} \\
&=&\int_{M}\left( \Delta u\Delta v-4Rc\left( \nabla u,\nabla v\right) +\frac{%
5}{4}R\nabla u\cdot \nabla v-\frac{1}{2}Quv\right) d\mu  \notag
\end{eqnarray}%
and $E\left( u\right) =E\left( u,u\right) $. Here $\mu $ is the measure
associated with metric $g$. It is clear that $E\left( u,v\right) $ makes
sense for $u,v\in H^{2}\left( M\right) $.

The scaling invariant quantity $\mu \left( M\right) ^{\frac{1}{3}%
}\int_{M}Qd\mu $ satisfies%
\begin{equation*}
\mu _{\rho ^{-4}g}\left( M\right) ^{\frac{1}{3}}\int_{M}Q_{\rho ^{-4}g}d\mu
_{\rho ^{-4}g}=-2E_{g}\left( \rho \right) \left\Vert \rho ^{-1}\right\Vert
_{L^{6}\left( M,g\right) }^{2}.
\end{equation*}%
Hence%
\begin{eqnarray*}
\sup_{\widetilde{g}\in \left[ g\right] }\widetilde{\mu }\left( M\right) ^{%
\frac{1}{3}}\int_{M}\widetilde{Q}d\widetilde{\mu } &=&-2\inf_{\rho \in
C^{\infty },\rho >0}E\left( \rho \right) \left\Vert \rho ^{-1}\right\Vert
_{L^{6}}^{2} \\
&=&-2\inf_{u\in H^{2}\left( M\right) ,u>0}E\left( u\right) \left\Vert
u^{-1}\right\Vert _{L^{6}}^{2}.
\end{eqnarray*}%
Here $\left[ g\right] $ is the conformal class of metrics associated with $g$%
. As in \cite{HY1}, we write%
\begin{equation}
I_{4}\left( u\right) =E\left( u\right) \left\Vert u^{-1}\right\Vert
_{L^{6}}^{2}  \label{eq1.6}
\end{equation}%
and%
\begin{equation}
Y_{4}\left( g\right) =\inf_{u\in H^{2}\left( M\right) ,u>0}E\left( u\right)
\left\Vert u^{-1}\right\Vert _{L^{6}}^{2}.  \label{eq1.7}
\end{equation}%
From above discussion we see $Y_{4}\left( \widetilde{g}\right) =Y_{4}\left(
g\right) $ for $\widetilde{g}\in \left[ g\right] $.

The question of whether $Y_{4}\left( g\right) $ is finite and achieved by
some particular metrics was considered in \cite{HY1,YZ}. This inequality is
analytically different from the one of Yamabe invariant $Y\left( g\right) $
(see \cite{LP}) due to the negative power involved.

\cite{HY1} shows that when $\ker P=0$, the value of the Green's function at
pole plays a crucial role. In particular based on explicit calculation of
this value on Berger's sphere, we were able to show $Y_{4}\left( g\right) $
is achieved on all Berger spheres. In general such an explicit formula is
not available. On the other hand, properties of Paneitz operator on the
standard three sphere are well understood.

On standard $S^{3}$, we have%
\begin{equation}
Pu=\Delta ^{2}u+\frac{1}{2}\Delta u-\frac{15}{16}u.  \label{eq1.8}
\end{equation}%
Let $N$ be the north pole, $\pi _{N}:S^{3}\backslash \left\{ N\right\}
\rightarrow \mathbb{R}^{3}$ be the stereographic projection, using $x=\pi
_{N}$ as the coordinates, the Green's function of $P$ with pole at $N$ is
given by%
\begin{equation}
G_{N}=-\frac{1}{4\pi }\frac{1}{\sqrt{\left\vert x\right\vert ^{2}+1}}.
\label{eq1.9}
\end{equation}%
In particular $G_{N}\left( N\right) =0$.

\begin{proposition}
\label{prop1.1}Let $g$ be the standard metric on $S^{3}$, then for any $p\in
S^{3}$ and any smooth symmetric $\left( 0,2\right) $ tensor $h$,%
\begin{equation*}
\left. \partial _{t}\right\vert _{t=0}G_{g+th}\left( p,p\right) =0.
\end{equation*}%
Here $G_{g+th}$ is the Green's function of the Paneitz operator $P_{g+th}$.
\end{proposition}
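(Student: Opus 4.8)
The plan is to reduce the statement, in two conformal steps, to the vanishing of the first variation of the total $Q$-curvature at a flat metric. Write $g_t=g+th$, $\Gamma_t=G_{g_t}(p,\cdot)$ and $F(t)=G_{g_t}(p,p)$. On the round $S^3$ one has $\ker P_g=0$: by \eqref{eq1.8}, $P$ acts on the $k$-th eigenspace of $-\Delta$ (eigenvalue $k(k+2)$, $k\ge 0$) as multiplication by $(k(k+2))^2-\tfrac12 k(k+2)-\tfrac{15}{16}$, which never vanishes. Hence $P_{g_t}\colon H^2(S^3)\to H^{-2}(S^3)$ stays an isomorphism for small $t$ and depends smoothly on $t$; since in dimension three $H^2\hookrightarrow C^0$ (indeed $C^{0,1/2}$), the evaluation $\delta_p$ lies in $H^{-2}$, so $\Gamma_t=P_{g_t}^{-1}\delta_p$ is a $C^1$ curve in $H^2$ and $F(t)=\Gamma_t(p)$ is $C^1$ with $F'(0)=\dot\Gamma(p)$, $\dot\Gamma:=\partial_t|_{t=0}\Gamma_t$. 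The defining relation of the Green's function reads $E_{g_t}(\phi,\Gamma_t)=\phi(p)$ for all $\phi\in H^2$ and all $t$. Differentiating this at $t=0$ with $\phi$ fixed, then (by density of $C^\infty$ in $H^2$ and continuity on $H^2$ of the two functionals involved) taking $\phi=\Gamma_0$, and using $E_{g_0}(\Gamma_0,\dot\Gamma)=\dot\Gamma(p)$, gives
\begin{equation*}
F'(0)=\dot\Gamma(p)=-\partial_t|_{t=0}\,E_{g_t}(\Gamma_0,\Gamma_0),
\end{equation*}
so it suffices to show the Paneitz energy of the \emph{fixed} function $\Gamma_0=G_g(p,\cdot)$ has vanishing first variation along $h$.

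Next I would pass to the flat model. By \eqref{eq1.9} together with homogeneity of $(S^3,g)$, in stereographic coordinates $x$ centered at $p$ one has $\Gamma_0=-\tfrac1{4\sqrt2\,\pi}\,w$, where $w=\bigl(\tfrac4{(1+|x|^2)^2}\bigr)^{1/4}$ is exactly the conformal factor with $w^{-4}g=\delta$, the Euclidean metric. Put $\gamma_t:=w^{-4}g_t=\delta+t\bar h$ on $\mathbb R^3$, with $\bar h:=w^{-4}h$; because $h$ is smooth on $S^3$, one checks $|\partial^j\bar h|=O(|x|^{-j})$ as $|x|\to\infty$, so $\gamma_t$ is a genuine metric for small $t$. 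The conformal covariance \eqref{eq1.3} with $\rho=w$ gives the pointwise identity $P_{g_t}w\cdot w\,d\mu_{g_t}=P_{\gamma_t}1\cdot 1\,d\mu_{\gamma_t}$ on $S^3\setminus\{p\}$; a standard integration by parts on $S^3\setminus B_\varepsilon(p)$ with $\varepsilon\to 0$, whose boundary terms at $p$ are $O(\varepsilon)$ since $w\in H^2$ and $w=O(\mathrm{dist}(p,\cdot))$ near $p$, then yields, with $c:=\tfrac1{32\pi^2}$ and $P_{\gamma_t}1=-\tfrac12 Q_{\gamma_t}$,
\begin{equation*}
E_{g_t}(\Gamma_0,\Gamma_0)=c\,E_{g_t}(w,w)=c\,E_{\gamma_t}(1,1)=-\tfrac c2\int_{\mathbb R^3}Q_{\gamma_t}\,d\mu_{\gamma_t}.
\end{equation*}

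It remains to differentiate this last integral at $t=0$. Since $R_\delta=0$ and $Rc_\delta=0$, every term in $\dot Q$ arising from the $|Rc|^2$ and $R^2$ contributions to \eqref{eq1.1} vanishes, and the measure term drops because $Q_\delta=0$; what survives is $\dot Q=-\tfrac14\Delta_\delta\dot R$, with $\dot R=\func{div}\func{div}\,\bar h-\Delta_\delta(\func{tr}\,\bar h)$ the flat linearization of scalar curvature. Therefore
\begin{equation*}
\partial_t|_{t=0}\int_{\mathbb R^3}Q_{\gamma_t}\,d\mu_{\gamma_t}=-\tfrac14\int_{\mathbb R^3}\Delta_\delta\dot R\,dx=-\tfrac14\lim_{\varrho\to\infty}\oint_{|x|=\varrho}\partial_\nu\dot R\,dS=0,
\end{equation*}
because $\dot R=O(|x|^{-2})$ and $\partial\dot R=O(|x|^{-3})$ make the flux through $\{|x|=\varrho\}$ of size $O(\varrho^{-1})$, while the uniform bound $|Q_{\gamma_t}|=O(|x|^{-4})$ for small $t$ justifies differentiating under the integral sign. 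Combined with the two previous steps this gives $F'(0)=0$.

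The hard part will be everything at the pole $p$. One must (i) make rigorous the differentiability of $t\mapsto\Gamma_t$ despite the conical singularity of $\Gamma_0$ there — done above by treating $P_{g_t}$ as an isomorphism $H^2\to H^{-2}$ rather than working pointwise — and (ii), more delicately, verify that conjugating the energy by the conformal factor $w$, which \emph{vanishes} at $p$, contributes nothing there, so that the first variation genuinely collapses onto $\partial_t\!\int Q\,d\mu$ at the flat metric. Both that collapse and its vanishing rely in an essential way on the \emph{exact} flatness of $w^{-4}g$ (not merely its $Q$-flatness) and on the decay of $\bar h=w^{-4}h$ inherited from the smoothness of $h$, which is precisely what kills the leftover boundary term at infinity; carrying out the two integration-by-parts arguments with these asymptotics is the technical core of the proof.
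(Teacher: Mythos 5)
Your proposal is correct and takes essentially the same route as the paper's proof of Proposition~\ref{prop3.1}: reduce the derivative $\left.\partial_t\right\vert_{t=0}G_{g+th}(p,p)$ to the first variation of the Paneitz energy of the Green's function, transfer by conformal covariance to $\mathbb{R}^3=S^3\setminus\{p\}$, and observe that the linearization of $Q$ at the flat metric is a pure Laplacian whose integral vanishes thanks to the decay of $\theta=\tau^4 h$. The packaging differs slightly --- you differentiate the quadratic form directly and identify the transferred quantity with $\int_{\mathbb{R}^3}Q_{\gamma_t}\,d\mu_{\gamma_t}$, whereas the paper works through Lemmas~\ref{lem2.1}--\ref{lem2.3} and the explicit formula $P_{\left\vert dx\right\vert^2,\theta}^{(1)}1=\tfrac18\Delta\left(\theta_{ijij}-\Delta \func{tr}\theta\right)$ --- but these agree, since $P_{g,h}^{(1)}1=-\tfrac12\left.\partial_t\right\vert_{t=0}Q_{g+th}$.
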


This calculation leads one to ask about the second variation. We have

\begin{theorem}
\label{thm1.1}Using the stereographic projection $\pi _{N}$ as the
coordinate, the standard metric $g$ on $S^{3}$ is written as%
\begin{equation}
g=\frac{4}{\left( \left\vert x\right\vert ^{2}+1\right) ^{2}}\left\vert
dx\right\vert ^{2}=\tau ^{-4}\left\vert dx\right\vert ^{2},  \label{eq1.10}
\end{equation}%
here%
\begin{equation}
\tau =\sqrt{\frac{\left\vert x\right\vert ^{2}+1}{2}}.  \label{eq1.11}
\end{equation}%
For any smooth symmetric $\left( 0,2\right) $ tensor $h$, denote%
\begin{equation}
\theta =\tau ^{4}h,  \label{eq1.12}
\end{equation}%
then we have%
\begin{eqnarray}
&&\left. \partial _{t}^{2}\right\vert _{t=0}G_{g+th}\left( N,N\right)
\label{eq1.13} \\
&=&-\frac{1}{64\pi ^{2}}\int_{\mathbb{R}^{3}}\left( \sum_{ij}\left( \theta
_{ikjk}+\theta _{jkik}-\left( tr\theta \right) _{ij}-\Delta \theta
_{ij}\right) ^{2}-\frac{3}{2}\left( \theta _{ijij}-\Delta tr\theta \right)
^{2}\right) dx.  \notag
\end{eqnarray}%
Here the derivatives $\theta _{ikjk}$ etc are taken with respect to the
standard metric on $\mathbb{R}^{3}$.
\end{theorem}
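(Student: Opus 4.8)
The plan is to use the conformal covariance (\ref{eq1.3})--(\ref{eq1.4}) to flatten the problem, together with the observation that the value of the Green's function at its pole is an energy, and then reduce everything to integration by parts on $\mathbb{R}^{3}$. First I would write $g_{t}=g+th=\tau ^{-4}\left( \left\vert dx\right\vert ^{2}+t\theta \right) =:\tau ^{-4}\overline{g}_{t}$, so that $\overline{g}_{t}=\left\vert dx\right\vert ^{2}+t\theta $ is a perturbation on $\mathbb{R}^{3}$ of the Euclidean metric, whose Paneitz operator is $\Delta ^{2}$. By (\ref{eq1.3}) with $\rho =\tau $ we have $P_{g_{t}}\varphi =\tau ^{7}P_{\overline{g}_{t}}\left( \tau \varphi \right) $ and $d\mu _{g_{t}}=\tau ^{-6}d\mu _{\overline{g}_{t}}$, hence the form (\ref{eq1.5}) satisfies $E_{g_{t}}\left( u,v\right) =E_{\overline{g}_{t}}\left( \tau u,\tau v\right) $. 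Since $N$ is not in the chart, $w_{t}:=G_{g_{t}}\left( \cdot ,N\right) $ obeys $P_{g_{t}}w_{t}=0$ on $S^{3}\backslash \left\{ N\right\} =\mathbb{R}^{3}$, so $\widetilde{w}_{t}:=\tau w_{t}$ solves the homogeneous equation $P_{\overline{g}_{t}}\widetilde{w}_{t}=0$ on $\mathbb{R}^{3}$; by (\ref{eq1.9}) and (\ref{eq1.11}) we get $\widetilde{w}_{0}=\tau G_{N}\equiv c$ with $c=-\frac{1}{4\pi \sqrt{2}}$, and since $\tau =\left\vert x\right\vert /\sqrt{2}+O\left( \left\vert x\right\vert ^{-1}\right) $ the pole value is recovered as $G_{g_{t}}\left( N,N\right) =w_{t}\left( N\right) =\sqrt{2}\lim_{\left\vert x\right\vert \rightarrow \infty }\widetilde{w}_{t}\left( x\right) /\left\vert x\right\vert $.

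Next, self-adjointness of $P$ gives $E_{g_{t}}\left( w_{t},v\right) =v\left( N\right) $ for every $v$ and $G_{g_{t}}\left( N,N\right) =E_{g_{t}}\left( w_{t},w_{t}\right) $. Differentiating the latter twice in $t$ and feeding in these identities (the first variation drops out, being exactly Proposition~\ref{prop1.1}) produces the working formula
\begin{equation*}
\left. \partial _{t}^{2}\right\vert _{t=0}G_{g_{t}}\left( N,N\right) =-\ddot{E}\left( c,c\right) -2\dot{E}\left( \widetilde{w}_{0}^{\prime },c\right) ,
\end{equation*}
where $\dot{E},\ddot{E}$ denote the first and second variations at $t=0$ of $\left( \widetilde{u},\widetilde{v}\right) \mapsto E_{\overline{g}_{t}}\left( \widetilde{u},\widetilde{v}\right) $ and $\widetilde{w}_{0}^{\prime }=\left. \partial _{t}\right\vert _{t=0}\widetilde{w}_{t}$. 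Because $c$ is constant, only the zeroth order term of (\ref{eq1.5}) survives when $c$ occupies a slot: $E_{\overline{g}_{t}}\left( \widetilde{u},c\right) =-\frac{c}{2}\int_{\mathbb{R}^{3}}Q_{\overline{g}_{t}}\widetilde{u}\,d\mu _{\overline{g}_{t}}$ and $E_{\overline{g}_{t}}\left( c,c\right) =-\frac{c^{2}}{2}\int_{\mathbb{R}^{3}}Q_{\overline{g}_{t}}\,d\mu _{\overline{g}_{t}}$. Using (\ref{eq1.1}) and the standard variations at a flat metric --- with $\dot{R}=\theta _{ijij}-\Delta tr\theta $ the linearized scalar curvature and $\dot{Rc}_{ij}=\frac{1}{2}\left( \theta _{ikjk}+\theta _{jkik}-\left( tr\theta \right) _{ij}-\Delta \theta _{ij}\right) $ the linearized Ricci tensor (these are precisely the expressions in (\ref{eq1.13})) --- one gets $\dot{Q}=-\frac{1}{4}\Delta \dot{R}$ and $\ddot{Q}=-\frac{1}{4}\left( 2\dot{\Delta}\dot{R}+\Delta \ddot{R}\right) -4|\dot{Rc}|^{2}+\frac{23}{16}\dot{R}^{2}$. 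For $\widetilde{w}_{0}^{\prime }$: the $O\left( t\right) $ part of $P_{\overline{g}_{t}}\widetilde{w}_{t}=0$ reads $\Delta ^{2}\widetilde{w}_{0}^{\prime }=-c\left. \partial _{t}\right\vert _{t=0}\left( P_{\overline{g}_{t}}1\right) =\frac{c}{2}\dot{Q}=-\frac{c}{8}\Delta \dot{R}$, and the constraint $\widetilde{w}_{0}^{\prime }=o\left( \left\vert x\right\vert \right) $ --- again Proposition~\ref{prop1.1} --- forces the harmonic ambiguity to vanish, so $\Delta \widetilde{w}_{0}^{\prime }=-\frac{c}{8}\dot{R}$.

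Hence $\dot{E}\left( \widetilde{w}_{0}^{\prime },c\right) =\frac{c}{8}\int \Delta \dot{R}\cdot \widetilde{w}_{0}^{\prime }=\frac{c}{8}\int \dot{R}\,\Delta \widetilde{w}_{0}^{\prime }=-\frac{c^{2}}{64}\int \dot{R}^{2}$, while $\ddot{E}\left( c,c\right) =-\frac{c^{2}}{2}\int_{\mathbb{R}^{3}}\left( \ddot{Q}+\dot{Q}\,tr\theta \right) dx$, the extra $tr\theta $ coming from the variation of $d\mu _{\overline{g}_{t}}$. Integrating by parts (all boundary contributions at infinity vanishing because $\theta =\tau ^{4}h$ is bounded with $\dot{R},\ddot{R},\dot{Rc}=O\left( \left\vert x\right\vert ^{-2}\right) $, a consequence of $\overline{g}_{t}=\tau ^{4}g_{t}$ with $g_{t}$ smooth on the compact $S^{3}$) gives $\int \dot{\Delta}\dot{R}=-\frac{1}{2}\int \left( \Delta tr\theta \right) \dot{R}$, $\int \Delta \ddot{R}=0$ and $\int \left( \Delta \dot{R}\right) tr\theta =\int \dot{R}\,\Delta tr\theta $, so that the $\Delta tr\theta $ terms cancel and $\int \left( \ddot{Q}+\dot{Q}\,tr\theta \right) dx=-4\int |\dot{Rc}|^{2}+\frac{23}{16}\int \dot{R}^{2}$. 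Assembling, $\left. \partial _{t}^{2}\right\vert _{t=0}G_{g_{t}}\left( N,N\right) =-2c^{2}\int |\dot{Rc}|^{2}+\frac{3c^{2}}{4}\int \dot{R}^{2}$, and with $c^{2}=\frac{1}{32\pi ^{2}}$, $|\dot{Rc}|^{2}=\frac{1}{4}\sum_{ij}\left( \theta _{ikjk}+\theta _{jkik}-\left( tr\theta \right) _{ij}-\Delta \theta _{ij}\right) ^{2}$ and $\dot{R}^{2}=\left( \theta _{ijij}-\Delta tr\theta \right) ^{2}$ this is exactly (\ref{eq1.13}).

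The main obstacle is analytic rather than algebraic: one must justify treating $N$, which sits at infinity in the stereographic chart, as above --- in particular that $G_{g_{t}}\left( N,N\right) =\sqrt{2}\lim \widetilde{w}_{t}/\left\vert x\right\vert $, that the double differentiation of $E_{g_{t}}\left( w_{t},w_{t}\right) $ is legitimate even though $w_{t}$ is only of class $C^{0,1}$ (the Green's function of a fourth order operator in dimension three is merely Hölder/Lipschitz at its pole), and that $\widetilde{w}_{0}^{\prime }$ is pinned down up to the harmless harmonic piece discarded above --- and one must establish the decay estimates $\dot{R},\ddot{R},\dot{Rc}=O\left( \left\vert x\right\vert ^{-2}\right) $ that render all the integrations by parts boundary-term free. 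Both points are controlled by the smoothness of $g_{t}$ on $S^{3}$ together with the relation $\overline{g}_{t}=\tau ^{4}g_{t}$.
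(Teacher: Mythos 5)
Your proof is correct (the final numerics check: with $c^{2}=\tfrac{1}{32\pi ^{2}}$ and $|\dot{Rc}|^{2}=\tfrac{1}{4}\sum _{ij}(\ldots)^{2}$, $-2c^{2}\int |\dot{Rc}|^{2}+\tfrac{3c^{2}}{4}\int \dot{R}^{2}$ does reproduce (\ref{eq1.13}); the cancellation of the $\Delta tr\theta $ cross terms between $\int \dot{\Delta }\dot{R}$ and $\int \dot{Q}\,tr\theta $ is also right), but it follows a genuinely different route from the paper. The paper expresses the second-order coefficient $II\left( N,N,h\right) $ via Lemma~\ref{lem2.3} as three integrals against $P_{g,h}^{\left( 1\right) }$ and $P_{g,h}^{\left( 2\right) }$, then evaluates each one by transferring to $\mathbb{R}^{3}$ using the explicit (and lengthy) expansions (\ref{eq2.3}), (\ref{eq2.5}) from the appendix, plus the explicit form of $I\left( N,q,h\right) $ from Lemma~\ref{lem3.1}. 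You instead start from the quadratic energy identity $G_{g_{t}}\left( N,N\right) =E_{g_{t}}\left( w_{t},w_{t}\right) $ and exploit $\tfrac{d}{dt}E_{g_{t}}\left( w_{t},v\right) =0$ for fixed $v$ to obtain $\partial _{t}^{2}G=-\ddot{E}-2\dot{E}\left( \tilde{w}_{0}^{\prime },c\right) $, and after flattening you only ever need the first and second $Q$-curvature variations $\dot{Q}$, $\ddot{Q}$ at a Euclidean background together with the standard $\dot{R},\dot{Rc}$ formulas. This is much more economical than the paper's computation: it never touches the full $P_{g,h}^{\left( 1\right) }$, $P_{g,h}^{\left( 2\right) }$ expansions. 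What the paper's route buys is that Lemma~\ref{lem2.3} and the $\eta _{\varepsilon }$-regularization argument handle all the distributional and decay issues systematically, whereas in your version the analytic work is displaced rather than avoided. Precisely the points you flag at the end are the ones that need to be filled in to make this rigorous: (i) differentiability of $t\mapsto w_{t}$ in $H^{2}\left( S^{3}\right) $ (the Green's function is merely Lipschitz at $N$, so one must check the energy $E_{g_{t}}\left( w_{t},w_{t}\right) $ and its $t$-derivatives are well-defined and exchange with integration); (ii) the normalization of $\tilde{w}_{0}^{\prime }$ --- the equation $\Delta ^{2}\tilde{w}_{0}^{\prime }=-\tfrac{c}{8}\Delta \dot{R}$ pins down $\Delta \tilde{w}_{0}^{\prime }+\tfrac{c}{8}\dot{R}$ only as a harmonic function, and one needs an a priori growth bound on $\tilde{w}_{0}^{\prime }=\tau I_{N}$ at infinity (which in the paper is extracted from Lemma~\ref{lem3.1}) to conclude it vanishes; (iii) the boundary-term-free integrations by parts need the $O^{\left( \infty \right) }\left( |x|^{-2}\right) $ decay of $\dot{R},\ddot{R},\dot{Rc}$, which in turn requires reducing to $h\left( N\right) =0$, $Dh\left( N\right) =0$ via Lemma~\ref{lem3.3} (with $\theta _{ij}=O^{\left( \infty \right) }\left( 1\right) $ alone, $\dot{R}=O\left( |x|^{-2}\right) $ holds but the decay claims on second variations need checking). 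None of these is a fatal gap, but each requires an argument; your write-up correctly identifies all of them as the remaining work.
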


Using formula (\ref{eq1.13}), we will show the second variation is always
nonpositive and it vanishes only in the direction generated by conformal
diffeomorphism. More precisely,

\begin{proposition}
\label{prop1.2}For any smooth symmetric $\left( 0,2\right) $ tensor $h$ on $%
S^{3}$ and $p\in S^{3}$,%
\begin{equation}
\left. \partial _{t}^{2}\right\vert _{t=0}G_{g+th}\left( p,p\right) \leq 0
\label{eq1.14}
\end{equation}%
Moreover, $\left. \partial _{t}^{2}\right\vert _{t=0}G_{g+th}\left(
p,p\right) =0$ if and only if $h=L_{X}g+f\cdot g$ for some smooth vector
fields $X$ and smooth function $f$ on $S^{3}$.
\end{proposition}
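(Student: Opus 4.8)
The plan is to reduce Proposition~\ref{prop1.2} to the integral formula \eqref{eq1.13} of Theorem~\ref{thm1.1}, together with Proposition~\ref{prop1.1}, by identifying the quadratic form on the right of \eqref{eq1.13} as (minus) a manifestly nonnegative expression and then characterizing its kernel. First I would fix $p=N$ without loss of generality, since the standard sphere is homogeneous and $G_{g+th}(p,p)$ transforms naturally under isometries (an isometry of $g$ moving $p$ to $N$ pulls $h$ back to another smooth symmetric tensor). Write $\theta=\tau^4 h$ as in \eqref{eq1.12} and set
\begin{equation*}
A_{ij}=\theta_{ikjk}+\theta_{jkik}-(\operatorname{tr}\theta)_{ij}-\Delta\theta_{ij},\qquad B=\theta_{ijij}-\Delta\operatorname{tr}\theta,
\end{equation*}
so that \eqref{eq1.13} reads $\partial_t^2|_{t=0}G_{g+th}(N,N)=-\frac{1}{64\pi^2}\int_{\mathbb R^3}(|A|^2-\frac32 B^2)\,dx$. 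Note $B=\operatorname{tr}A-B$, i.e. $\operatorname{tr}A=2B$, so $A$ has trace $2B$ and the traceless part $A^\circ=A-\frac13(\operatorname{tr}A)\delta=A-\frac23 B\delta$ satisfies $|A|^2=|A^\circ|^2+\frac13(\operatorname{tr}A)^2=|A^\circ|^2+\frac43 B^2$. Hence $|A|^2-\frac32 B^2=|A^\circ|^2-\frac16 B^2$, which is not obviously signed; the main work is to show the integral of this quantity is in fact nonnegative.

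The key idea is that $A$ is (up to sign and a divergence) the linearized Einstein tensor of the flat metric in the direction $\theta$: the map $\theta\mapsto\theta_{ikjk}+\theta_{jkik}-(\operatorname{tr}\theta)_{ij}-\Delta\theta_{ij}$ is, on $\mathbb R^3$, twice the linearized Ricci tensor plus lower-symbol corrections, and $B$ is (twice) the linearized scalar curvature. One then uses the second Bianchi identity at the linearized level: $\operatorname{div}$ of the linearized Einstein tensor vanishes, which forces algebraic relations among the Fourier symbols of $A$ and $B$. Concretely, passing to the Fourier transform $\widehat\theta(\xi)$, each of $A$, $B$ becomes a quadratic-in-$\xi$ linear expression in $\widehat\theta$; the linearized Bianchi identity says $\xi_i\widehat{A^{\mathrm{Ein}}}_{ij}=0$ where $A^{\mathrm{Ein}}_{ij}=A_{ij}-\frac12 B\,\delta_{ij}$ is the linearized Einstein tensor (check the exact constant). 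Decomposing $\widehat\theta$ at each frequency $\xi\ne 0$ into pieces along $\xi\otimes\xi$, $\xi\odot(\text{perp})$, $\delta$, and the transverse–traceless part, the constraint kills the "gauge" components and one is left with an expression in which $\int(|A|^2-\frac32 B^2)$ is a sum of squares of the transverse–traceless part of $\widehat\theta$ (weighted by $|\xi|^4$) — hence $\le 0$ after the overall minus sign. This Fourier/representation-theoretic computation is the heart of the argument and the step I expect to be the main obstacle: getting the constants right and correctly book-keeping the $SO(3)$-decomposition of symmetric $2$-tensors on $\mathbb R^3$, since in dimension $3$ the Weyl tensor vanishes and the transverse–traceless space has the smallest possible rank, which is exactly why the kernel turns out to be just the conformal directions.

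For the equality case, the computation above shows $\partial_t^2|_{t=0}G_{g+th}(N,N)=0$ iff the transverse–traceless part of $\widehat\theta(\xi)$ vanishes for all $\xi$, i.e. iff $\theta$ can be written as $\theta_{ij}=Y_{i,j}+Y_{j,i}+\varphi\,\delta_{ij}$ for some vector field $Y$ and function $\varphi$ on $\mathbb R^3$ (the standard splitting of a symmetric $2$-tensor into a Lie-derivative part and a pure-trace part, valid because the TT part is the obstruction to solving the relevant elliptic system). Translating back through $\theta=\tau^4 h$ and recalling that $g=\tau^{-4}|dx|^2$: a pure-trace term $\varphi\,\delta_{ij}$ in $\theta$ corresponds to $h=\tau^{-4}\varphi\,|dx|^2=(\varphi)\,g$, a conformal direction $f\cdot g$; and a flat Lie-derivative term $L_Y|dx|^2$ corresponds, via the conformal change, to $L_X g$ plus a conformal term, where $X$ is the vector field on $S^3$ that is $\pi_N$-related to $Y$ (the change of the conformal factor under the flow of $X$ contributes an extra $f\cdot g$). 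So $h=L_Xg+f\cdot g$, as claimed. Conversely, if $h=L_Xg+f\cdot g$ then by diffeomorphism invariance and conformal covariance \eqref{eq1.3}–\eqref{eq1.4} of $G$ one checks directly that $G_{g+th}(p,p)$ is $t$-independent to second order — indeed $\partial_t G_{\phi_t^*g}(p,p)$ and its conformal analogue vanish identically, not just at $t=0$ — so the second variation vanishes; one should make sure the smooth vector field $X$ on $S^3$ produced from $Y$ on $\mathbb R^3$ (and the function $f$) are genuinely smooth at $N$, which follows from the growth of $\tau$ and the conformal normalization, and is the only regularity point that needs care.
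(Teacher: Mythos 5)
Your plan — use Theorem~\ref{thm1.1}, pass to Fourier space, and show the resulting quadratic form has the TT tensors as its kernel, matching those against $\mathcal A=\{L_Xg+fg\}$ — is in the same spirit as the paper's proof, but there are two genuine gaps, one of which blocks the Fourier step and the other of which affects the equality characterization.

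The first and more serious gap: you cannot take the Fourier transform of $\theta=\tau^4 h$ without a preliminary reduction. For a generic smooth $h$ on $S^3$ one only has $h_{ij}=O^{(\infty)}(|x|^{-4})$, so $\theta_{ij}=O^{(\infty)}(1)$ is merely bounded and in general not in $L^2(\mathbb R^3)$; its Fourier transform is a tempered distribution (typically containing delta-type singularities at $\xi=0$ coming from the nonzero value $h(N)$), and the pointwise-in-$\xi$ decomposition of $\widehat\theta$ into $\xi\otimes\xi$, $\xi\odot(\mathrm{perp})$, trace, and TT parts simply does not make sense. The paper handles this by first proving (their Lemma~\ref{lem3.2}) that $II(\,\cdot\,,L_Xg+fg)=0$, then constructing (Lemma~\ref{lem3.3}, via a linear-algebra lemma on homogeneous quadratic vector fields) a smooth $X$ with $(h-L_Xg)(N)=0$ and $D(h-L_Xg)(N)=0$. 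After replacing $h$ by $h-L_Xg$ the tensor $\theta$ becomes $O^{(\infty)}(|x|^{-2})$ and hence lies in $L^2$; only then does Parseval apply. This normalization step is indispensable and is missing from your proposal. Once it is in place, the nonnegativity itself is actually simpler than the Bianchi-identity route you sketch: the paper writes the Parseval integrand in terms of $a=\widehat\theta$, conjugates $\xi$ to $(|\xi|,0,0)$ by an orthogonal matrix, sets $B=O^TaO$, and finds the integrand is $\bigl(\tfrac12|b_{22}-b_{33}|^2+2|b_{23}|^2\bigr)|\xi|^4\ge0$, with no Bianchi bookkeeping needed. Your observation that $\operatorname{tr}A=2B$ and your plan to exploit $\xi_iA_{ij}=\xi_jB$ are correct as algebra, but you flag yourself that you have not verified the constants and might not close the sum-of-squares; the paper's direct rotation avoids this uncertainty entirely.

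The second gap concerns the equality case. From $b_{22}=b_{33}$ and $b_{23}=0$ you get $\widehat\theta_{ij}=\alpha\delta_{ij}+\beta_i\xi_j+\beta_j\xi_i$, i.e. the TT part of $\widehat\theta$ vanishes. You then propose to write $\theta_{ij}=Y_{i,j}+Y_{j,i}+\varphi\delta_{ij}$ on $\mathbb R^3$ and pull back to $S^3$; but the $Y$ and $\varphi$ solving this elliptic system have no a priori control at infinity, so there is no reason the corresponding $X$ and $f$ extend smoothly across $N$ on $S^3$ — you mention this at the end but do not resolve it. The paper avoids the issue by never constructing $X,f$ explicitly: it invokes the orthogonal decomposition $\mathcal S^2S^3=\mathcal A\oplus\mathcal B$ with $\mathcal B$ the TT tensors on $S^3$ (Besse), and shows $h\perp\mathcal B$ by computing $\int_{S^3}\langle h,k\rangle\,d\mu=\int_{\mathbb R^3}\widehat\theta_{ij}\overline{\widehat{\tau^{-6}\kappa_{ij}}}\,d\xi$, which vanishes because $\widehat{\tau^{-6}\kappa_{ii}}=0$ and $\widehat{\tau^{-6}\kappa_{ij}}\xi_j=0$ while $\widehat\theta_{ij}$ is of pure-trace-plus-longitudinal type. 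This argument lives intrinsically on $S^3$ and sidesteps the regularity-at-$N$ problem; you should adopt it (or give a careful asymptotic analysis of $Y$ and $\varphi$ near infinity) to make the equality case rigorous.
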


It is worth pointing out that in \cite{HY2}, motivated from recent works 
\cite{GM,HR} for $Q$ curvature in dimension five or higher and Proposition %
\ref{prop1.1} and \ref{prop1.2} above, it was shown that for smooth compact
three manifold $\left( M,g\right) $ with positive scalar and $Q$ curvature,
the Paneitz operator must have zero kernel and the Green's function pole's
value is strictly negative except when $\left( M,g\right) $ is conformal
diffeomorphic to the standard $S^{3}$. Further developments can be found in 
\cite{HY3,HY4}.

The Sobolev inequality of Paneitz operator on $S^{3}$ was first verified in 
\cite{YZ}. Different proofs were given in \cite{H,HY1}. The new approach
motivates the condition NN and condition P for a Paneitz operator (see \cite[%
section 5]{HY1}). Here we will introduce a quantity for the Paneitz operator
whose sign corresponds to condition NN and P. Let $\left( M,g\right) $ be a
smooth compact three dimensional Riemannian manifold without boundary. For
any $p\in M$, denote (recall functions in $H^{2}\left( M\right) $ are $\frac{%
1}{2}$-Holder continuous)%
\begin{equation}
\nu \left( M,g,p\right) =\inf \left\{ \frac{E\left( u\right) }{%
\int_{M}u^{2}d\mu }:u\in H^{2}\left( M\right) \backslash \left\{ 0\right\}
,u\left( p\right) =0\right\} .  \label{eq1.15}
\end{equation}%
When no confusion could arise we denote it as $\nu \left( g,p\right) $ or $%
\nu _{p}$. We also write%
\begin{eqnarray}
\nu \left( M,g\right) &=&\inf_{p\in M}\nu \left( M,g,p\right)  \label{eq1.16}
\\
&=&\inf \left\{ \frac{E\left( u\right) }{\int_{M}u^{2}d\mu }:u\in
H^{2}\left( M\right) \backslash \left\{ 0\right\} ,u\left( p\right) =0\text{
for some }p\right\} .  \notag
\end{eqnarray}%
The importance of $\nu \left( M,g\right) $ lies in that $\left( M,g\right) $
satisfies condition P if and only if $\nu \left( M,g\right) >0$ and it
satisfies condition NN if and only if $\nu \left( M,g\right) \geq 0$. For
the standard metric $g$ on $S^{3}$, $\nu \left( S^{3},g\right) =0$, in fact
we have (see Example \ref{ex4.1}) $\nu \left( S^{3},g,p\right) =0$ for all $%
p\in S^{3}$.

\begin{theorem}
\label{thm1.2}Let $g$ be the standard metric on $S^{3}$, then for any $p\in
S^{3}$ and any smooth symmetric $\left( 0,2\right) $ tensor $h$,%
\begin{equation}
\left. \partial _{t}\right\vert _{t=0}\nu \left( g+th,p\right) =0
\label{eq1.17}
\end{equation}%
and%
\begin{equation}
\left. \partial _{t}^{2}\right\vert _{t=0}\nu \left( g+th,p\right)
=-16\left. \partial _{t}^{2}\right\vert _{t=0}G_{g+th}\left( p,p\right) .
\label{eq1.18}
\end{equation}%
In particular, $\left. \partial _{t}^{2}\right\vert _{t=0}\nu \left(
g+th,p\right) \geq 0$ and it vanishes if and only if $h=L_{X}g+f\cdot g$ for
some smooth vector fields $X$ and smooth functions $f$ on $S^{3}$.
\end{theorem}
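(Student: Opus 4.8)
The plan is to identify $\nu(g+th,p)$, for small $t$, with an implicitly defined ``Green's function eigenvalue'', and then to extract the two variations from the implicit equation together with Proposition~\ref{prop1.1} and Proposition~\ref{prop1.2}. I would write $g_{t}=g+th$; since $0\notin\func{spec}P_{g}$, for $(t,\nu)$ near $(0,0)$ the operator $P_{g_{t}}-\nu$ is invertible, and, because $H^{2}$ embeds into $C^{0,1/2}$ in dimension three, the Dirac mass $\delta_{p}$ lies in $H^{-2}$. I would then set
\begin{equation*}
\phi(t,\nu):=\left\langle \delta_{p},\left(P_{g_{t}}-\nu\right)^{-1}\delta_{p}\right\rangle =G_{g_{t},\nu}\left(p,p\right),
\end{equation*}
the pole value of the Green's function of $P_{g_{t}}-\nu$. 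This is smooth in $(t,\nu)$ near the origin, with $\phi(0,0)=G_{g}(p,p)=0$ (by (\ref{eq1.9}) and transitivity of the isometry group of $S^{3}$), $\phi(t,0)=G_{g_{t}}(p,p)$, and $\partial_{\nu}\phi(t,\nu)=\left\Vert \left(P_{g_{t}}-\nu\right)^{-1}\delta_{p}\right\Vert _{L^{2}(g_{t})}^{2}$. A short computation in the stereographic coordinate --- using $G_{N}=-(4\pi\sqrt{2}\,\tau)^{-1}$, $d\mu_{g}=\tau^{-6}dx$ and $\int_{\mathbb{R}^{3}}(1+|x|^{2})^{-4}dx=\pi^{2}/8$ --- is expected to give the crucial value $\partial_{\nu}\phi(0,0)=\left\Vert G_{g}(\cdot,p)\right\Vert _{L^{2}(g)}^{2}=1/16$. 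By the implicit function theorem there is then a smooth $\nu_{\ast}$ near $0$ with $\nu_{\ast}(0)=0$ and $\phi(t,\nu_{\ast}(t))\equiv0$, unique near the origin.

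The heart of the proof is to show $\nu(g_{t},p)=\nu_{\ast}(t)$ for $t$ near $0$. For the inequality $\nu(g_{t},p)\leq\nu_{\ast}(t)$ I would use the test function $u_{t}:=\left(P_{g_{t}}-\nu_{\ast}(t)\right)^{-1}\delta_{p}\in H^{2}\setminus\{0\}$, which satisfies $u_{t}(p)=\phi(t,\nu_{\ast}(t))=0$ and, since $P_{g_{t}}u_{t}=\delta_{p}+\nu_{\ast}(t)u_{t}$,
\begin{equation*}
E_{g_{t}}(u_{t})=\left\langle P_{g_{t}}u_{t},u_{t}\right\rangle =u_{t}(p)+\nu_{\ast}(t)\left\Vert u_{t}\right\Vert _{L^{2}(g_{t})}^{2}=\nu_{\ast}(t)\left\Vert u_{t}\right\Vert _{L^{2}(g_{t})}^{2},
\end{equation*}
so that its Rayleigh quotient equals $\nu_{\ast}(t)$. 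For the reverse inequality I would use that $P_{g}$ on $S^{3}$ has exactly one negative eigenvalue, namely $-15/16$, simple with constant eigenfunction, all others positive; hence for small $t$ the operator $A_{t}:=P_{g_{t}}-\nu_{\ast}(t)$ is invertible with exactly one (simple) negative eigenvalue, whose eigenfunction is $C^{0}$-close to a nonzero constant and so does not vanish at $p$. The ``bordered form'' criterion --- for a self-adjoint invertible operator $A$ with a single negative eigenvalue and a functional $\ell$ not annihilating the corresponding eigenvector, $A\geq0$ on $\ker\ell$ if and only if $\left\langle \ell,A^{-1}\ell\right\rangle \leq0$ --- applied with $A=A_{t}$, $\ell=\delta_{p}$ and $\left\langle \delta_{p},A_{t}^{-1}\delta_{p}\right\rangle =\phi(t,\nu_{\ast}(t))=0$, then gives $E_{g_{t}}(u)\geq\nu_{\ast}(t)\left\Vert u\right\Vert _{L^{2}(g_{t})}^{2}$ for all $u\in H^{2}$ with $u(p)=0$. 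This yields $\nu(g_{t},p)=\nu_{\ast}(t)$; in particular $t\mapsto\nu(g+th,p)$ is smooth near $0$ and $\nu(g,p)=0$.

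It then remains to differentiate the identities $\phi(t,\nu_{\ast}(t))\equiv0$ and $\phi(t,0)=G_{g_{t}}(p,p)$ at $t=0$. From the first, $\partial_{t}\phi(0,0)+\partial_{\nu}\phi(0,0)\,\nu_{\ast}'(0)=0$; since $\partial_{t}\phi(0,0)=\partial_{t}|_{t=0}G_{g+th}(p,p)=0$ by Proposition~\ref{prop1.1}, this forces $\nu_{\ast}'(0)=0$, which is (\ref{eq1.17}). Differentiating once more and using $\nu_{\ast}'(0)=0$ gives $\partial_{t}^{2}\phi(0,0)+\partial_{\nu}\phi(0,0)\,\nu_{\ast}''(0)=0$, that is
\begin{equation*}
\partial_{t}^{2}|_{t=0}\,G_{g+th}(p,p)+\tfrac{1}{16}\,\partial_{t}^{2}|_{t=0}\,\nu(g+th,p)=0,
\end{equation*}
which is (\ref{eq1.18}). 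Finally, Proposition~\ref{prop1.2} gives $\partial_{t}^{2}|_{t=0}G_{g+th}(p,p)\leq0$, with equality precisely when $h=L_{X}g+f\cdot g$, so with (\ref{eq1.18}) we conclude $\partial_{t}^{2}|_{t=0}\nu(g+th,p)\geq0$, vanishing under the same condition.

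The main obstacle is the identification $\nu(g_{t},p)=\nu_{\ast}(t)$ of the second paragraph: I must show that the constrained infimum (\ref{eq1.15}) is genuinely governed by this ``Green's function eigenvalue'' and not by some lower part of the spectrum. This rests on the precise spectral picture of $P_{g}$ on $S^{3}$, on perturbation stability of its simple ground state, and on the bordered-form criterion; care is also needed because $\delta_{p}$ lies only in $H^{-2}$, which is harmless here only because the dimension is three. The remaining pieces --- the constant $1/16$ and the chain-rule bookkeeping --- are routine.
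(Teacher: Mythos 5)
Your proposal is correct but takes a genuinely different route from the paper's own proof. The paper (Proposition~\ref{prop4.2} with equations (\ref{eq4.9})--(\ref{eq4.20})) differentiates the constrained eigenvalue problem directly: it Taylor-expands the minimizer $u_{p}$, the multiplier $\alpha_{p}$ and $\nu_{p}$ in $t$, extracts the formulas
\begin{equation*}
\nu _{p}^{(1)}=\int_{M}u_{p}P_{g,h}^{(1)}u_{p}\,d\mu ,\qquad
\nu _{p}^{(2)}=\int_{M}u_{p}P_{g,h}^{(2)}u_{p}\,d\mu +\int_{M}u_{p}^{(1)}P_{g,h}^{(1)}u_{p}\,d\mu +\tfrac{1}{2}\int_{M}u_{p}P_{g,h}^{(1)}u_{p}\cdot \mathrm{tr}\,h\,d\mu -\tfrac{1}{4}\nu _{p}^{(1)}\int_{M}u_{p}^{2}\mathrm{tr}\,h\,d\mu ,
\end{equation*}
uses $u_{N}=4G_{N}$ and $u_{N}^{(1)}=4I_{N}+\tfrac{1}{4}\alpha_{N}^{(1)}u_{N}$, and then matches term by term with the expression (\ref{eq3.7}) for $II(N,N,h)$. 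You instead introduce the shifted pole-value $\phi(t,\nu)=G_{g_{t},\nu}(p,p)$, use the implicit function theorem (with $\partial_{\nu}\phi(0,0)=\Vert G_{N}\Vert_{L^{2}}^{2}=1/16$, which is exactly the paper's Example~\ref{ex4.1}) to produce $\nu_{\ast}(t)$, identify $\nu(g_{t},p)=\nu_{\ast}(t)$ via the spectral picture of $P_{g}$ on $S^{3}$ (one negative eigenvalue, simple, constant eigenfunction) together with a bordered-form criterion, and then read off both variation formulas from the chain rule applied to $\phi(t,\nu_{\ast}(t))\equiv 0$. Your route makes the constant $-16=-1/\Vert G_{N}\Vert_{L^{2}}^{2}$ conceptually transparent, establishes at the same time that $t\mapsto\nu(g+th,p)$ is smooth near $0$ (which the paper's Taylor ansatz (\ref{eq4.9}) assumes implicitly), and avoids the explicit computation of $u_{N}^{(1)}$. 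The cost is that you rely on the bordered-form inertia lemma and on the stability of the simple ground eigenvalue under perturbation, whereas the paper's computation is elementary once the expansion formulas are set up. One small point to keep in mind when writing this up: the bordered-form criterion should be stated and verified carefully in the equality case $\langle\delta_{p},A_{t}^{-1}\delta_{p}\rangle =0$, since that is exactly the case you invoke; the index computation works, but the argument should record that the nonnegativity on $\ker\delta_{p}$ persists there (with a one-dimensional null space spanned by $A_{t}^{-1}\delta_{p}$), which is precisely what produces the minimizer $u_{t}$ in your forward inequality.
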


Roughly speaking Theorem \ref{thm1.2} tells us for Riemannian metrics near
the standard metric on $S^{3}$, as long as it is not conformal diffeomorphic
to the standard sphere, condition P is satisfied and hence $Y_{4}\left(
g\right) $ is achieved by \cite{HY1}. More related results can be found in 
\cite{HY2,HY3}.

In section \ref{sec2} below we will introduce technique simplifying various
calculations. In section \ref{sec3} we will derive the first and second
variation formulas and justify its nonpositivity. In section \ref{sec4}, we
will study the quantity $\nu \left( g\right) $ and its relations to $%
Y_{4}\left( g\right) $ and the second eigenvalue of Paneitz operator. Some
of the lengthy calculations are collected in the appendix to streamline the
discussions.

\section{Some preparations\label{sec2}}

Because the formula of $Q$ curvature and Paneitz operator are relatively
complicated, it is crucial to take advantage of the conformal covariant
property (\ref{eq1.3}) to simplify the calculation of first and second
variation of the Green's function pole's value. To achieve this we observe
that the Paneitz operator gives us a sequence of fourth order conformal
covariant operators. Indeed for smooth metric $g$ and symmetric $\left(
0,2\right) $ tensor $h$, we define the operator $P_{g,h}^{\left( k\right) }$
by the Taylor expansion%
\begin{equation}
P_{g+th}\varphi \sim \sum_{k=0}^{\infty }t^{k}P_{g,h}^{\left( k\right)
}\varphi .  \label{eq2.1}
\end{equation}%
Here $\sim $ means for any $m\geq 0$,%
\begin{equation*}
P_{g+th}\varphi =\sum_{k=0}^{m}t^{k}P_{g,h}^{\left( k\right) }\varphi
+O\left( t^{m+1}\right)
\end{equation*}%
as $t\rightarrow 0$.

\begin{lemma}
\label{lem2.1}For any smooth function $\varphi $ and positive smooth
function $\rho $,%
\begin{equation}
P_{\rho ^{-4}g,\rho ^{-4}h}^{\left( k\right) }\varphi =\rho
^{7}P_{g,h}^{\left( k\right) }\left( \rho \varphi \right) .  \label{eq2.2}
\end{equation}
\end{lemma}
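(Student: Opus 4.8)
The plan is to feed the perturbed metric directly into the conformal covariance identity (\ref{eq1.3}) and then match Taylor coefficients in $t$. Fix a positive smooth function $\rho$, which does not depend on $t$, and a smooth symmetric $(0,2)$ tensor $h$. For $\left\vert t\right\vert $ small the metric $g+th$ is nondegenerate, and since the coefficients of $P_{g+th}$ (via (\ref{eq1.1}), (\ref{eq1.2})) are built algebraically from $\left( g+th\right) ^{-1}$, $\det \left( g+th\right) $ and finitely many derivatives of the metric components $g_{ij}+th_{ij}$, the map $t\mapsto P_{g+th}\varphi $ is real-analytic near $t=0$ for every fixed smooth $\varphi $. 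In particular the asymptotic expansion (\ref{eq2.1}) is legitimate and the operators $P_{g,h}^{\left( k\right) }$ are uniquely determined by it (equivalently $P_{g,h}^{\left( k\right) }\varphi =\frac{1}{k!}\left. \partial _{t}^{k}\right\vert _{t=0}P_{g+th}\varphi $).

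Now apply (\ref{eq1.3}) with $g$ replaced by the metric $g+th$:
\begin{equation*}
P_{\rho ^{-4}\left( g+th\right) }\varphi =\rho ^{7}P_{g+th}\left( \rho \varphi \right) .
\end{equation*}
The key algebraic observation is simply that $\rho ^{-4}\left( g+th\right) =\rho ^{-4}g+t\left( \rho ^{-4}h\right) $, so the left-hand side is the Paneitz operator of the one-parameter family obtained by perturbing the base metric $\rho ^{-4}g$ in the direction of the tensor $\rho ^{-4}h$. Hence, by the very definition (\ref{eq2.1}) applied to this family,
\begin{equation*}
P_{\rho ^{-4}\left( g+th\right) }\varphi \sim \sum_{k=0}^{\infty }t^{k}P_{\rho ^{-4}g,\,\rho ^{-4}h}^{\left( k\right) }\varphi .
\end{equation*}

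On the other hand, expanding the right-hand side by (\ref{eq2.1}) for the family $g+th$ and using that $\rho $ is independent of $t$,
\begin{equation*}
\rho ^{7}P_{g+th}\left( \rho \varphi \right) \sim \sum_{k=0}^{\infty }t^{k}\rho ^{7}P_{g,h}^{\left( k\right) }\left( \rho \varphi \right) .
\end{equation*}
Comparing these two asymptotic expansions term by term — justified by the analyticity (or simply by differentiating the displayed substituted identity $k$ times at $t=0$) — and invoking uniqueness of Taylor coefficients yields (\ref{eq2.2}) for every $k\geq 0$. There is no serious obstacle here; the only point needing care is the bookkeeping, namely verifying that the family $\rho^{-4}(g+th)$ is literally the perturbation of $\rho^{-4}g$ by $\rho^{-4}h$ (immediate) and that the two expansions may be compared coefficientwise, which the analyticity remark secures.
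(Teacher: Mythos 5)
Your proof is correct and follows the same route as the paper: substitute $g+th$ into the conformal covariance identity (\ref{eq1.3}), observe that $\rho^{-4}(g+th)=\rho^{-4}g+t\rho^{-4}h$, expand both sides via (\ref{eq2.1}), and equate Taylor coefficients. The paper gives this argument more tersely; your added remarks on analyticity and uniqueness of coefficients merely make the comparison explicit and do not change the substance.
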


This is the conformal covariant property of $P_{g,h}^{\left( k\right) }$.
Indeed for $t$ near $0$, 
\begin{equation*}
P_{\rho ^{-4}\left( g+th\right) }\varphi =\rho ^{7}P_{g+th}\left( \rho
\varphi \right) =P_{\rho ^{-4}g+t\rho ^{-4}h}\varphi ,
\end{equation*}%
hence%
\begin{equation*}
\sum_{k=0}^{\infty }t^{k}P_{\rho ^{-4}g,\rho ^{-4}h}^{\left( k\right)
}\varphi \sim \rho ^{7}\sum_{k=0}^{\infty }t^{k}P_{g,h}^{\left( k\right)
}\left( \rho \varphi \right) .
\end{equation*}%
Equation (\ref{eq2.2}) follows.

Careful calculation shows (see appendix)%
\begin{eqnarray}
&&P_{g,h}^{\left( 1\right) }\varphi  \label{eq2.3} \\
&=&-h_{ij}\left( \Delta \varphi \right) _{ij}-\Delta \left( h_{ij}\varphi
_{ij}\right) -\frac{1}{2}\left( 2h_{ijj}-\left( trh\right) _{i}\right)
\left( \Delta \varphi \right) _{i}-\frac{1}{2}\Delta \left( \left(
2h_{ijj}-\left( trh\right) _{i}\right) \varphi _{i}\right)  \notag \\
&&+2\left( 2h_{ikjk}-\Delta h_{ij}-\left( trh\right) _{ij}\right) \varphi
_{ij}-8Rc_{ij}h_{ik}\varphi _{jk}+\frac{5}{4}Rh_{ij}\varphi _{ij}  \notag \\
&&-\frac{5}{4}\left( h_{ijij}-\Delta trh-Rc_{ij}h_{ij}\right) \Delta \varphi
-2Rc_{ij}\left( 2h_{ikj}-h_{ijk}\right) \varphi _{k}+\frac{5}{8}R\left(
2h_{ijj}-\left( trh\right) _{i}\right) \varphi _{i}  \notag \\
&&+\frac{3}{4}\left( h_{klkl}-\Delta trh-Rc_{kl}h_{kl}\right) _{i}\varphi
_{i}-\frac{3}{4}h_{ij}R_{i}\varphi _{j}+\frac{1}{8}\Delta \left(
h_{ijij}-\Delta trh-Rc_{ij}h_{ij}\right) \varphi  \notag \\
&&-\frac{1}{8}h_{ij}R_{ij}\varphi -\frac{1}{16}\left( 2h_{ijj}-\left(
trh\right) _{i}\right) R_{i}\varphi +Rc_{ij}\left( 2h_{ikjk}-\Delta
h_{ij}-\left( trh\right) _{ij}\right) \varphi  \notag \\
&&-2Rc_{ij}Rc_{ik}h_{jk}\varphi -\frac{23}{32}R\left( h_{ijij}-\Delta
trh-Rc_{ij}h_{ij}\right) \varphi .  \notag
\end{eqnarray}%
On the other hand the formula of $P_{g,h}^{\left( 2\right) }\varphi $ is
much more complicated, and we will not write it down here. Instead we
observe that $P_{g,h}^{\left( 2\right) }\varphi $ is a fourth order operator
in $\varphi $ and $P_{g,h}^{\left( 2\right) }1$ can be written down in a
reasonable way. Indeed (see appendix)%
\begin{eqnarray}
&&Q_{g+th}  \label{eq2.4} \\
&=&Q-\frac{t}{4}\Delta \left( h_{ijij}-\Delta trh-Rc_{ij}h_{ij}\right)
-2tRc_{ij}\left( 2h_{ikjk}-\left( trh\right) _{ij}-\Delta h_{ij}\right) 
\notag \\
&&+\frac{23t}{16}R\left( h_{ijij}-\Delta trh-Rc_{ij}h_{ij}\right) +\frac{t}{8%
}\left( 2h_{ijj}-\left( trh\right) _{i}\right) R_{i}+\frac{t}{4}%
h_{ij}R_{ij}+4tRc_{ij}Rc_{ik}h_{jk}  \notag \\
&&-\frac{t^{2}}{4}\Delta \left[ \left( \Delta h_{ij}+\left( trh\right)
_{ij}-h_{ikjk}-h_{ikkj}\right) h_{ij}\right] +\frac{t^{2}}{4}h_{ij}\left(
h_{klkl}-\Delta trh-Rc_{kl}h_{kl}\right) _{ij}  \notag \\
&&-\frac{t^{2}}{16}\Delta \sum_{ijk}\left( h_{ikj}+h_{jki}-h_{ijk}\right)
^{2}+\frac{t^{2}}{16}\Delta \sum_{i}\left( 2h_{ijj}-\left( trh\right)
_{i}\right) ^{2}  \notag \\
&&+\frac{t^{2}}{8}\left( 2h_{ijj}-\left( trh\right) _{i}\right) \left(
h_{klkl}-\Delta trh-Rc_{kl}h_{kl}\right) _{i}-\frac{t^{2}}{4}\Delta \left(
Rc_{ij}h_{ij}^{2}\right)  \notag \\
&&-\frac{t^{2}}{2}\sum_{ij}\left( h_{ikjk}+h_{jkik}-\left( trh\right)
_{ij}-\Delta h_{ij}\right) ^{2}+2t^{2}Rc_{ij}h_{kl}\left(
2h_{ikjl}-h_{klij}-h_{ijkl}\right)  \notag \\
&&+4t^{2}Rc_{ij}h_{ik}\left( h_{jlkl}+h_{kljl}-\left( trh\right)
_{jk}-\Delta h_{jk}\right) +\frac{23t^{2}}{32}\left( h_{ijij}-\Delta
trh-Rc_{ij}h_{ij}\right) ^{2}  \notag \\
&&+\frac{23t^{2}}{16}R\left( \Delta h_{ij}+\left( trh\right)
_{ij}-h_{ikjk}-h_{ikkj}\right) h_{ij}-t^{2}Rc_{ij}\left(
h_{ikl}+h_{kli}-h_{ilk}\right) \left( h_{jkl}+h_{klj}-h_{jlk}\right)  \notag
\\
&&+t^{2}Rc_{ij}\left( 2h_{ikj}-h_{ijk}\right) \left( 2h_{kll}-\left(
trh\right) _{k}\right) +\frac{23t^{2}}{64}R\sum_{ijk}\left(
h_{ikj}+h_{jki}-h_{ijk}\right) ^{2}  \notag \\
&&-\frac{23t^{2}}{64}R\sum_{i}\left( 2h_{ijj}-\left( trh\right) _{i}\right)
^{2}-\frac{t^{2}}{8}\left( 2h_{ikk}-\left( trh\right) _{i}\right)
h_{ij}R_{j}-\frac{t^{2}}{8}\left( 2h_{ikj}-h_{ijk}\right) h_{ij}R_{k}  \notag
\\
&&-\frac{t^{2}}{4}%
h_{ij}^{2}R_{ij}-4t^{2}Rc_{ij}Rc_{ik}h_{jk}^{2}-2t^{2}Rc_{ij}Rc_{kl}h_{ik}h_{jl}+%
\frac{23t^{2}}{16}R\cdot Rc_{ij}h_{ij}^{2}+O\left( t^{3}\right) .  \notag
\end{eqnarray}%
Because%
\begin{equation*}
P_{g+th}1=-\frac{1}{2}Q_{g+th},
\end{equation*}%
we deduce that%
\begin{eqnarray}
&&P_{g,h}^{\left( 2\right) }1  \label{eq2.5} \\
&=&-\frac{1}{8}h_{ij}\left( h_{klkl}-\Delta trh-Rc_{kl}h_{kl}\right) _{ij}+%
\frac{1}{8}\Delta \left[ \left( \Delta h_{ij}+\left( trh\right)
_{ij}-h_{ikjk}-h_{ikkj}\right) h_{ij}\right]  \notag \\
&&-\frac{1}{16}\left( 2h_{ijj}-\left( trh\right) _{i}\right) \left(
h_{klkl}-\Delta trh-Rc_{kl}h_{kl}\right) _{i}+\frac{1}{32}\Delta
\sum_{ijk}\left( h_{ikj}+h_{jki}-h_{ijk}\right) ^{2}  \notag \\
&&-\frac{1}{32}\Delta \sum_{i}\left( 2h_{ijj}-\left( trh\right) _{i}\right)
^{2}+\frac{1}{8}\Delta \left( Rc_{ij}h_{ij}^{2}\right) +\frac{1}{4}%
\sum_{ij}\left( h_{ikjk}+h_{jkik}-\left( trh\right) _{ij}-\Delta
h_{ij}\right) ^{2}  \notag \\
&&-Rc_{ij}h_{kl}\left( 2h_{ikjl}-h_{klij}-h_{ijkl}\right)
-2Rc_{ij}h_{ik}\left( h_{jlkl}+h_{kljl}-\left( trh\right) _{jk}-\Delta
h_{jk}\right)  \notag \\
&&-\frac{23}{64}\left( h_{ijij}-\Delta trh-Rc_{ij}h_{ij}\right) ^{2}-\frac{23%
}{32}R\left( \Delta h_{ij}+\left( trh\right) _{ij}-h_{ikjk}-h_{ikkj}\right)
h_{ij}  \notag \\
&&+\frac{1}{2}Rc_{ij}\left( h_{ikl}+h_{kli}-h_{ilk}\right) \left(
h_{jkl}+h_{klj}-h_{jlk}\right) -\frac{1}{2}Rc_{ij}\left(
2h_{ikj}-h_{ijk}\right) \left( 2h_{kll}-\left( trh\right) _{k}\right)  \notag
\\
&&-\frac{23}{128}R\sum_{ijk}\left( h_{ikj}+h_{jki}-h_{ijk}\right) ^{2}+\frac{%
23}{128}R\sum_{i}\left( 2h_{ijj}-\left( trh\right) _{i}\right) ^{2}  \notag
\\
&&+\frac{1}{8}\left[ h_{ij}^{2}R_{ij}+\frac{1}{2}\left( 2h_{ikk}-\left(
trh\right) _{i}\right) h_{ij}R_{j}+\frac{1}{2}\left( 2h_{ikj}-h_{ijk}\right)
h_{ij}R_{k}\right] +2Rc_{ij}Rc_{ik}h_{jk}^{2}  \notag \\
&&+Rc_{ij}Rc_{kl}h_{ik}h_{jl}-\frac{23}{32}R\cdot Rc_{ij}h_{ij}^{2}.  \notag
\end{eqnarray}%
In general, $P_{g,h}^{\left( 1\right) }$ is not self adjoint, instead we have

\begin{lemma}
\label{lem2.2}For every $\varphi ,\psi \in C^{\infty }$,%
\begin{equation}
\int_{M}P_{g,h}^{\left( 1\right) }\varphi \cdot \psi d\mu =\int_{M}\varphi
P_{g,h}^{\left( 1\right) }\psi d\mu -\frac{1}{2}\int_{M}\left( P\varphi
\cdot \psi -\varphi P\psi \right) trhd\mu .  \label{eq2.6}
\end{equation}
\end{lemma}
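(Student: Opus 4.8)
The plan is to obtain (\ref{eq2.6}) not by manipulating the explicit formula (\ref{eq2.3}), but from the symmetry of the energy form. The point is that for \emph{every} metric $g'$ the second expression for $E(u,v)$ in (\ref{eq1.5}) is a sum of terms each manifestly symmetric in $u$ and $v$, so $E_{g'}(\varphi,\psi)=E_{g'}(\psi,\varphi)$; applied to $g'=g+th$ this reads, for $\varphi,\psi\in C^\infty(M)$ and $t$ near $0$,
\begin{equation*}
\int_M P_{g+th}\varphi\cdot\psi\,d\mu_{g+th}=\int_M P_{g+th}\psi\cdot\varphi\,d\mu_{g+th}.
\end{equation*}
I would then expand both sides in powers of $t$ and read off the coefficient of $t$.

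The two ingredients of the expansion are as follows. For the volume element, $d\mu_{g+th}=\sqrt{\det(g+th)}\,dx=\sqrt{\det g}\,\sqrt{\det(I+tg^{-1}h)}\,dx$, and since $\det(I+tg^{-1}h)=1+t\,g^{ij}h_{ij}+O(t^2)$,
\begin{equation*}
d\mu_{g+th}=\left(1+\tfrac{t}{2}\,trh+O(t^2)\right)d\mu,\qquad trh=g^{ij}h_{ij}.
\end{equation*}
For the operator, the definition (\ref{eq2.1}) gives $P_{g+th}\varphi=P\varphi+tP_{g,h}^{(1)}\varphi+O(t^2)$. Substituting both into the left-hand side of the symmetry identity yields
\begin{equation*}
\int_M P\varphi\cdot\psi\,d\mu+t\int_M\left(P_{g,h}^{(1)}\varphi\cdot\psi+\tfrac{1}{2}\,trh\cdot P\varphi\cdot\psi\right)d\mu+O(t^2),
\end{equation*}
and the right-hand side is this same expression with $\varphi$ and $\psi$ swapped.

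Comparing coefficients of $t^0$ just recovers the self-adjointness of $P=P_g$, while comparing coefficients of $t$ gives
\begin{equation*}
\int_M\left(P_{g,h}^{(1)}\varphi\cdot\psi+\tfrac{1}{2}\,trh\cdot P\varphi\cdot\psi\right)d\mu=\int_M\left(P_{g,h}^{(1)}\psi\cdot\varphi+\tfrac{1}{2}\,trh\cdot P\psi\cdot\varphi\right)d\mu,
\end{equation*}
which rearranges at once to (\ref{eq2.6}). I do not expect any genuine obstacle here; the only step deserving a word is the justification for comparing Taylor coefficients, namely that $t\mapsto\int_M P_{g+th}\varphi\cdot\psi\,d\mu_{g+th}$ is smooth in $t$ (indeed $g+th$ is polynomial in $t$ and $M$ is compact) so that its expansion at $t=0$ is obtained by integrating the two expansions above, which are uniform on $M$. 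The comparatively painful alternative --- integrating by parts directly in (\ref{eq2.3}) --- is precisely what this argument is designed to avoid, and is the reason the conformal-covariance framework of this section is set up in the first place.
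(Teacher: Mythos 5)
Your proof is correct and is exactly the argument the paper has in mind: the paper's own justification is the single sentence that (\ref{eq2.6}) ``follows from the Taylor expansion in $t$'' of $\int_M P_{g+th}\varphi\cdot\psi\,d\mu_{g+th}=\int_M\varphi P_{g+th}\psi\,d\mu_{g+th}$. You have simply spelled out the expansion of the volume form and the operator and collected the coefficient of $t$, which is precisely what that sentence asks the reader to do.
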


Indeed this follows from the Taylor expansion in $t$ for%
\begin{equation*}
\int_{M}P_{g+th}\varphi \cdot \psi d\mu _{g+th}=\int_{M}\varphi P_{g+th}\psi
d\mu _{g+th}.
\end{equation*}

To derive a variational formula for the Green's function pole's value we
write%
\begin{equation}
G_{g+th}\left( p,q\right) =G\left( p,q\right) +tI\left( p,q,h\right)
+t^{2}II\left( p,q,h\right) +O\left( t^{3}\right) .  \label{eq2.7}
\end{equation}%
Note that%
\begin{equation}
\left. \partial _{t}\right\vert _{t=0}G_{g+th}\left( p,p\right) =I\left(
p,p,h\right)  \label{eq2.8}
\end{equation}%
and%
\begin{equation}
\left. \partial _{t}^{2}\right\vert _{t=0}G_{g+th}\left( p,p\right)
=2II\left( p,p,h\right) .  \label{eq2.9}
\end{equation}%
We can write $I$ and $II$ in terms of $P_{g,h}^{\left( 1\right) }$ and $%
P_{g,h}^{\left( 2\right) }$.

\begin{lemma}
\label{lem2.3}For any smooth symmetric $\left( 0,2\right) $ tensor $h$, let $%
I$ and $II$ be defined in (\ref{eq2.7}), then%
\begin{equation}
I\left( p,q,h\right) =-\int_{M}P_{g,h}^{\left( 1\right) }G_{q}\cdot
G_{p}d\mu -\frac{1}{2}G\left( p,q\right) trh\left( q\right) ,  \label{eq2.10}
\end{equation}%
and%
\begin{eqnarray}
&&II\left( p,p,h\right)  \label{eq2.11} \\
&=&-\int_{M}\left( P_{g,h}^{\left( 2\right) }G_{p}\cdot
G_{p}+P_{g,h}^{\left( 1\right) }G_{p}\cdot I_{p}+\frac{1}{2}P_{g,h}^{\left(
1\right) }G_{p}\cdot G_{p}trh\right) d\mu  \notag \\
&&-\frac{1}{2}I\left( p,p,h\right) trh\left( p\right) -\frac{1}{8}G\left(
p,p\right) \left( trh\left( p\right) \right) ^{2}+\frac{1}{4}G\left(
p,p\right) \left\vert h\left( p\right) \right\vert ^{2}.  \notag
\end{eqnarray}%
Here $G_{p}\left( q\right) =G\left( p,q\right) $, $I_{p}\left( q,h\right)
=I\left( p,q,h\right) $. The integration should be understood in
distribution sense.
\end{lemma}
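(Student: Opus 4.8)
\emph{Proof idea.} The plan is to differentiate, in $t$, the weak form of the defining property of the Green's function and to match Taylor coefficients. Fix $p\in M$ (we implicitly assume $\ker P_{g}=0$, so that $G=G_{g}$ exists). Since $P_{g+th}$ is self adjoint with respect to $\mu_{g+th}$, for every $\varphi\in C^{\infty}\left( M\right) $ and every small $t$,
\begin{equation*}
\varphi\left( p\right) =\int_{M}\left( P_{g+th}\varphi\right) \,G_{g+th}\left( p,\cdot\right) \,d\mu_{g+th}.
\end{equation*}
I would expand the three factors on the right: $P_{g+th}\varphi=P\varphi+tP_{g,h}^{\left( 1\right) }\varphi+t^{2}P_{g,h}^{\left( 2\right) }\varphi+O\left( t^{3}\right) $ from (\ref{eq2.1}); $G_{g+th}\left( p,\cdot\right) =G_{p}+tI_{p}+t^{2}II_{p}+O\left( t^{3}\right) $ from (\ref{eq2.7}); and the standard volume expansion $d\mu_{g+th}=\bigl( 1+\tfrac{t}{2}trh+t^{2}\bigl( \tfrac{1}{8}\left( trh\right) ^{2}-\tfrac{1}{4}\left\vert h\right\vert ^{2}\bigr) +O\left( t^{3}\right) \bigr) d\mu$. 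Because the left side is independent of $t$, each positive power of $t$ on the right must vanish; the $t^{0}$ term merely re-derives the definition of $G$.

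The coefficient of $t^{1}$ is $\int_{M}\bigl( P_{g,h}^{\left( 1\right) }\varphi\cdot G_{p}+P\varphi\cdot I_{p}+\tfrac{1}{2}trh\,P\varphi\cdot G_{p}\bigr) d\mu=0$. I would transfer $P_{g,h}^{\left( 1\right) }$ off $\varphi$ via \lemref{lem2.2}: of the two resulting $trh$-corrections, one cancels the volume term exactly and the other collapses to $\tfrac{1}{2}\varphi\left( p\right) trh\left( p\right) $ because $PG_{p}=\delta_{p}$. Combined with the formal self adjointness of $P$ (to turn $\int P\varphi\cdot I_{p}$ into $\int\varphi\cdot PI_{p}$) this reduces the identity to $PI_{p}=-P_{g,h}^{\left( 1\right) }G_{p}-\tfrac{1}{2}trh\left( p\right) \delta_{p}$ in the distributional sense. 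Inverting $P$ (legitimate since $\ker P=0$) by convolving with $G$ gives $I_{p}=-P^{-1}P_{g,h}^{\left( 1\right) }G_{p}-\tfrac{1}{2}trh\left( p\right) G_{p}$; writing this in integral notation and using $I\left( p,q,h\right) =I\left( q,p,h\right) $ together with the symmetry of $G$ yields (\ref{eq2.10}). Elliptic regularity applied to the equation for $I_{p}$ (in dimension three $\delta_{p}$ and $P_{g,h}^{\left( 1\right) }G_{p}$ both lie in $H^{-2}$) shows $I_{p}\in H^{2}$, so its pole value $I\left( p,p,h\right) $ makes sense.

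The coefficient of $t^{2}$ is $\int_{M}\bigl( P_{g,h}^{\left( 2\right) }\varphi\cdot G_{p}+P_{g,h}^{\left( 1\right) }\varphi\cdot I_{p}+P\varphi\cdot II_{p}+\tfrac{1}{2}trh\,( P_{g,h}^{\left( 1\right) }\varphi\cdot G_{p}+P\varphi\cdot I_{p}) +( \tfrac{1}{8}( trh) ^{2}-\tfrac{1}{4}\left\vert h\right\vert ^{2}) P\varphi\cdot G_{p}\bigr) d\mu=0$. Using $\int P\varphi\cdot II_{p}=\int\varphi\cdot PII_{p}$ I would solve for the distribution $PII_{p}$; since only the diagonal value $II\left( p,p,h\right) =II_{p}\left( p\right) $ is required, I would then pair the identity against $G_{p}$, which is meaningful since $II_{p}\left( p\right) =\langle\delta_{p},II_{p}\rangle=\langle PG_{p},II_{p}\rangle=\langle G_{p},PII_{p}\rangle$. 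In practice this amounts to substituting the non-smooth ``test function'' $\varphi=G_{p}$ into the $t^{2}$ identity---the step the statement flags must be read distributionally---and collapsing the two terms containing $PG_{p}=\delta_{p}$: $\tfrac{1}{2}\int trh\,( PG_{p}) I_{p}\,d\mu=\tfrac{1}{2}trh\left( p\right) I\left( p,p,h\right) $ and $\int( \tfrac{1}{8}( trh) ^{2}-\tfrac{1}{4}\left\vert h\right\vert ^{2}) ( PG_{p}) G_{p}\,d\mu=( \tfrac{1}{8}( trh\left( p\right) ) ^{2}-\tfrac{1}{4}\left\vert h\left( p\right) \right\vert ^{2}) G\left( p,p\right) $. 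What remains is exactly (\ref{eq2.11}).

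The substantive difficulty is the regularity bookkeeping, not any new idea. One must justify that $t\mapsto G_{g+th,p}$ is twice differentiable in a topology compatible with (\ref{eq2.7}); interpret the pairings $\int P_{g,h}^{\left( 2\right) }G_{p}\cdot G_{p}\,d\mu$ and $\int P_{g,h}^{\left( 1\right) }G_{p}\cdot I_{p}\,d\mu$, whose integrands are not absolutely integrable near $p$ (for instance $\nabla^{4}G_{p}\sim\left\vert x-p\right\vert ^{-3}$) but which are well-defined $H^{-2}$--$H^{2}$ dualities because $G_{p},I_{p}\in H^{2}$; and verify that the substitution $\varphi=G_{p}$ is legitimate by a density/continuity argument. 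Granting these points, the rest is Taylor expansion plus the two elementary inputs $PG_{p}=\delta_{p}$ and \lemref{lem2.2}.
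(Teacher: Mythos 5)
Your proposal is correct and follows essentially the same route as the paper: expand $\varphi\left( p\right) =\int_{M}P_{g+th}\varphi\cdot G_{g+th,p}\,d\mu_{g+th}$ in powers of $t$, then substitute $\varphi =G_{q}$ (resp.\ $\varphi =G_{p}$) into the first-order (resp.\ second-order) coefficient identity. For the first-order formula you take a slight detour through \lemref{lem2.2}, inversion of $P$, and the symmetry $I\left( p,q,h\right) =I\left( q,p,h\right) $ instead of plugging in $\varphi =G_{q}$ directly, but this is equivalent, and your regularity bookkeeping fills in what the paper dispatches with the phrase ``by approximation.''
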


\begin{proof}
For any smooth function $\varphi $ we have%
\begin{equation*}
\varphi \left( p\right) =\int_{M}P_{g+th}\varphi \cdot G_{g+th,p}d\mu
_{g+th},
\end{equation*}%
expand everything into power series of $t$, using%
\begin{equation*}
d\mu _{g+th}=\left[ 1+\frac{trh}{2}\cdot t+\left( \frac{\left( trh\right)
^{2}}{8}-\frac{\left\vert h\right\vert ^{2}}{4}\right) t^{2}+O\left(
t^{3}\right) \right] d\mu ,
\end{equation*}%
we see%
\begin{equation*}
\int_{M}\left( I_{p}\cdot P\varphi +G_{p}P_{g,h}^{\left( 1\right) }\varphi +%
\frac{1}{2}G_{p}trh\cdot P\varphi \right) d\mu =0
\end{equation*}%
and%
\begin{eqnarray*}
0 &=&\int_{M}\left[ P\varphi \cdot II_{p}+P_{g,h}^{\left( 1\right) }\varphi
\cdot I_{p}+P_{g,h}^{\left( 2\right) }\varphi \cdot G_{p}+\frac{1}{2}%
P\varphi \cdot I_{p}trh+\frac{1}{2}P_{g,h}^{\left( 1\right) }\varphi \cdot
G_{p}trh\right. \\
&&\left. +\frac{1}{8}P\varphi \cdot G_{p}\left( trh\right) ^{2}-\frac{1}{4}%
P\varphi \cdot G_{p}\left\vert h\right\vert ^{2}\right] d\mu .
\end{eqnarray*}%
By approximation we know the same formula remains true for $\varphi \in
H^{2}\left( M\right) $. Let $\varphi =G_{q}$ or $G_{p}$, we get the lemma.
\end{proof}

\section{First and second variation of Green's function pole's value\label%
{sec3}}

Let $N$ be the north pole on $S^{3}$ and $\pi _{N}:S^{3}\backslash \left\{
N\right\} \rightarrow \mathbb{R}^{3}$ be the stereographic projection. Using 
$x=\pi _{N}$ as the coordinate, we have the standard metric $g$ on $S^{3}$
can be written as%
\begin{equation}
g=\frac{4}{\left( \left\vert x\right\vert ^{2}+1\right) ^{2}}\left\vert
dx\right\vert ^{2}=\tau ^{-4}\left\vert dx\right\vert ^{2},  \label{eq3.1}
\end{equation}%
here%
\begin{equation}
\tau =\sqrt{\frac{\left\vert x\right\vert ^{2}+1}{2}}.  \label{eq3.2}
\end{equation}%
By conformal invariance property (\ref{eq1.3}), the Green's function of $P$
with pole at $N$ is given by%
\begin{equation}
G_{N}=-\frac{1}{4\pi }\frac{1}{\sqrt{\left\vert x\right\vert ^{2}+1}}.
\label{eq3.3}
\end{equation}%
More generally%
\begin{equation}
G\left( x,y\right) =-\frac{1}{4\pi }\frac{\left\vert x-y\right\vert }{\sqrt{%
\left\vert x\right\vert ^{2}+1}\sqrt{\left\vert y\right\vert ^{2}+1}}.
\label{eq3.4}
\end{equation}%
We are ready to compute the first variation of Green's function pole value.

\begin{proposition}
\label{prop3.1}For any $p\in S^{3}$ and smooth symmetric $\left( 0,2\right) $
tensor $h$, $I\left( p,p,h\right) =0$.
\end{proposition}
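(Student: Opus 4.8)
The plan is to reduce the computation to the flat metric on $\mathbb{R}^{3}$ via the conformal covariance of $P_{g,h}^{\left( 1\right) }$ (Lemma~\ref{lem2.1}), exploiting the fact that $\tau $ times the pole value of the Green's function is constant. By the homogeneity of $\left( S^{3},g\right) $ --- or, equivalently, by running the argument with the stereographic projection $\pi _{p}$ in place of $\pi _{N}$ --- it suffices to prove $I\left( N,N,h\right) =0$. By Lemma~\ref{lem2.3},
\begin{equation*}
I\left( N,N,h\right) =-\int_{S^{3}}P_{g,h}^{\left( 1\right) }G_{N}\cdot G_{N}\,d\mu -\frac{1}{2}G\left( N,N\right) trh\left( N\right) ,
\end{equation*}
and since $G_{N}\left( N\right) =0$ the last term vanishes, so the claim is equivalent to $\int_{S^{3}}P_{g,h}^{\left( 1\right) }G_{N}\cdot G_{N}\,d\mu =0$; here, as in Lemma~\ref{lem2.3}, the integral is read as $\lim_{\varepsilon \rightarrow 0}\int_{S^{3}\setminus B_{\varepsilon }\left( N\right) }$, i.e.\ as $\lim_{R\rightarrow \infty }\int_{\left\{ \left\vert x\right\vert \leq R\right\} }$ in the chart $x=\pi _{N}$.

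Next I would push this integral down to $\left( \mathbb{R}^{3},g_{0}\right) $ with $g_{0}=\left\vert dx\right\vert ^{2}$. Writing $g=\tau ^{-4}g_{0}$, $h=\tau ^{-4}\theta $, $\theta =\tau ^{4}h$, one has $d\mu =\tau ^{-6}\,dx$, and on $S^{3}\setminus \left\{ N\right\} $, where all quantities are smooth, Lemma~\ref{lem2.1} gives $P_{g,h}^{\left( 1\right) }G_{N}=\tau ^{7}P_{g_{0},\theta }^{\left( 1\right) }\left( \tau G_{N}\right) $. The decisive observation is that, by (\ref{eq3.2})--(\ref{eq3.3}), $\tau G_{N}\equiv -\frac{1}{4\pi \sqrt{2}}=:c$ is a constant; hence $P_{g,h}^{\left( 1\right) }G_{N}=c\,\tau ^{7}P_{g_{0},\theta }^{\left( 1\right) }1$, and recombining one factor $\tau $ with the remaining $G_{N}$,
\begin{equation*}
\int_{S^{3}}P_{g,h}^{\left( 1\right) }G_{N}\cdot G_{N}\,d\mu =c^{2}\int_{\mathbb{R}^{3}}P_{g_{0},\theta }^{\left( 1\right) }1\,dx .
\end{equation*}
Since $P_{g+th}1=-\frac{1}{2}Q_{g+th}$, and every term in (\ref{eq2.4}) carrying $Rc$, $R$, or a factor $t^{2}$ drops out for the flat metric, I would read off $P_{g_{0},\theta }^{\left( 1\right) }1=\frac{1}{8}\Delta \left( \theta _{ijij}-\Delta tr\theta \right) $, a pure Laplacian.

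It then remains to show $\int_{\mathbb{R}^{3}}\Delta \left( \theta _{ijij}-\Delta tr\theta \right) dx=0$. By the divergence theorem this equals $\lim_{R\rightarrow \infty }\int_{\left\{ \left\vert x\right\vert =R\right\} }\partial _{r}\left( \theta _{ijij}-\Delta tr\theta \right) dS$, so one needs decay of the derivatives of $\theta =\tau ^{4}h$. Since $h$ is a smooth tensor on $S^{3}$, working in the inverted chart near the pole gives $\partial ^{m}h_{ij}\left( x\right) =O\left( \left\vert x\right\vert ^{-4-m}\right) $, and $\tau ^{4}$ is a degree-four polynomial; so $\theta _{ij}=O\left( 1\right) $ while $\partial ^{m}\theta _{ij}=O\left( \left\vert x\right\vert ^{-m}\right) $ for $m\geq 1$. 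Hence $\theta _{ijij}-\Delta tr\theta =O\left( \left\vert x\right\vert ^{-2}\right) $, its radial derivative is $O\left( \left\vert x\right\vert ^{-3}\right) $, and the surface integral is $O\left( R^{-1}\right) \rightarrow 0$. Combined with the previous reductions this gives $I\left( N,N,h\right) =0$.

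The step I expect to be the main obstacle is this last one, together with the bookkeeping that makes it rigorous: one has to (i) pin down the decay of $\theta $ and its derivatives --- i.e.\ control how $\tau ^{4}h$ extends across the pole --- so that the Euclidean integral is genuinely convergent, and (ii) confirm that the distributional prescription of Lemma~\ref{lem2.3} is precisely the exhaustion $\left\{ \left\vert x\right\vert \leq R\right\} $ --- in particular, that any delta mass of $P_{g,h}^{\left( 1\right) }G_{N}$ concentrated at $N$ contributes nothing, being annihilated by $G_{N}\left( N\right) =0$. The remaining steps are short conformal-covariance bookkeeping using (\ref{eq2.4}).
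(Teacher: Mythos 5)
Your argument follows the paper's proof step for step: reduce to $p=N$ by symmetry, invoke Lemma~\ref{lem2.3} (the extra term vanishing since $G_N(N)=0$), push to the flat chart via Lemma~\ref{lem2.1} and the constancy of $\tau G_N$, read off $P_{\left\vert dx\right\vert^2,\theta}^{(1)}1=\tfrac18\Delta\left(\theta_{ijij}-\Delta tr\theta\right)$ from (\ref{eq2.4}), and kill the boundary term using the decay $\theta_{ij}=O^{(\infty)}(1)$. The one step you flag as a possible obstacle --- equating the distributional pairing with the exhaustion limit --- is handled in the paper not via the (somewhat loose) heuristic that a delta mass is annihilated by $G_N(N)=0$, but by approximating $G_N$ in $H^2$ by $(1-\eta_\varepsilon)G_N$ using \cite[Lemma 2.2]{HY1} and then passing to the limit by dominated convergence with the uniform pointwise bound $\bigl\vert P_{g,h}^{(1)}\bigl((1-\eta_\varepsilon)G_N\bigr)\cdot G_N\bigr\vert\leq c\left\vert y\right\vert^{-2}$ near $N$.
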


Proposition \ref{prop1.1} follows from Proposition \ref{prop3.1} and (\ref%
{eq2.8}).

\begin{proof}
By symmetry we can assume $p=N$. For convenience we write%
\begin{equation*}
I\left( h\right) =I\left( N,N,h\right) .
\end{equation*}%
Because we need to discuss various function's behavior near $N$, we denote $%
S $ as the south pole of $S^{3}$, $\pi _{S}:S^{3}\backslash \left\{
S\right\} \rightarrow \mathbb{R}^{3}$ as the stereographic projection with
respect to $S$. We can use $y=\pi _{S}$ as the coordinate. By Lemma \ref%
{lem2.3} and the fact $G_{N}\left( N\right) =0$,%
\begin{equation*}
I\left( h\right) =-\int_{S^{3}}P_{g,h}^{\left( 1\right) }G_{N}\cdot
G_{N}d\mu \text{ (in distribution sense).}
\end{equation*}%
Let $\eta \in C^{\infty }\left( \mathbb{R}^{3}\right) $ such that $\left.
\eta \right\vert _{B_{1}}=1$, $\left. \eta \right\vert _{\mathbb{R}%
^{3}\backslash B_{2}}=0$ and $0\leq \eta \leq 1$. For $\varepsilon >0$, we
write $\eta _{\varepsilon }=\eta \left( \frac{y}{\varepsilon }\right) $. By 
\cite[Lemma 2.2]{HY1} $\eta _{\varepsilon }G_{N}\rightarrow 0$ in $%
H^{2}\left( S^{3}\right) $, hence%
\begin{eqnarray*}
I\left( h\right) &=&-\lim_{\varepsilon \rightarrow
0^{+}}\int_{S^{3}}P_{g,h}^{\left( 1\right) }\left( \left( 1-\eta
_{\varepsilon }\right) G_{N}\right) \cdot G_{N}d\mu \\
&=&-\int_{S^{3}}P_{g,h}^{\left( 1\right) }G_{N}\cdot G_{N}d\mu \text{ (in
pointwise product sense).}
\end{eqnarray*}%
Note here we have used the dominated convergence theorem and the fact near $%
N $,%
\begin{equation*}
\left\vert P_{g,h}^{\left( 1\right) }G_{N}\cdot G_{N}\right\vert \leq
c\left\vert y\right\vert ^{-2},\quad \left\vert P_{g,h}^{\left( 1\right)
}\left( \left( 1-\eta _{\varepsilon }\right) G_{N}\right) \cdot
G_{N}\right\vert \leq c\left\vert y\right\vert ^{-2}
\end{equation*}%
here $c$ is independent of $\varepsilon $. For convenience we denote $\theta
=\tau ^{4}h$. By Lemma \ref{lem2.1} we have%
\begin{eqnarray*}
&&I\left( h\right) \\
&=&-\lim_{\varepsilon \rightarrow 0^{+}}\int_{S^{3}\backslash B_{\varepsilon
}\left( N\right) }P_{g,h}^{\left( 1\right) }G_{N}\cdot G_{N}d\mu \\
&=&-\frac{1}{32\pi ^{2}}\lim_{R\rightarrow \infty }\int_{\left\vert
x\right\vert \leq R}P_{\left\vert dx\right\vert ^{2},\theta }^{\left(
1\right) }1dx \\
&=&-\frac{1}{256\pi ^{2}}\lim_{R\rightarrow \infty }\int_{\left\vert
x\right\vert \leq R}\Delta \left( \theta _{ijij}-\Delta tr\theta \right) dx
\\
&=&-\frac{1}{256\pi ^{2}}\lim_{R\rightarrow \infty }\int_{\left\vert
x\right\vert =R}\left( \theta _{ijij}-\Delta tr\theta \right) _{k}\frac{x_{k}%
}{R}dS
\end{eqnarray*}%
To understand the boundary term we use the following notation: let $f$ be a
smooth function defined outside a ball, we say $f=O^{\left( \infty \right)
}\left( \left\vert x\right\vert ^{a}\right) $ as $\left\vert x\right\vert
\rightarrow \infty $ if for any $m$, $\partial _{i_{1}\cdots i_{m}}f\left(
x\right) =O\left( \left\vert x\right\vert ^{a-m}\right) $. In particular $%
h_{ij}=O^{\left( \infty \right) }\left( \left\vert x\right\vert ^{-4}\right) 
$, $\tau =O^{\left( \infty \right) }\left( \left\vert x\right\vert \right) $%
, hence $\theta _{ij}=O^{\left( \infty \right) }\left( 1\right) $ and%
\begin{equation*}
\left( \theta _{ijij}-\Delta tr\theta \right) _{k}=O^{\left( \infty \right)
}\left( \left\vert x\right\vert ^{-3}\right) ,
\end{equation*}%
this implies $I\left( h\right) =0$.
\end{proof}

It is worth pointing out that there are other ways to calculate $I\left(
N,N,h\right) $. For example one may do this by using the formula of $%
P_{g,h}^{\left( 1\right) }$ on $S^{3}$ (see (\ref{eqa.16})). However the
method in the above proof will be crucial for the calculation of second
variation formula.

To continue we need the expression of $I\left( N,q,h\right) $.

\begin{lemma}
\label{lem3.1}Let $\theta =\tau ^{4}h$, under the stereographic projection
with respect to $N$, we denote the coordinate of $q$ as $y$, then%
\begin{eqnarray}
&&I\left( N,q,h\right)  \label{eq3.5} \\
&=&-\frac{1}{256\pi ^{2}}\int_{\mathbb{R}^{3}}\left( \theta _{ijij}-\Delta
tr\theta \right) \cdot \frac{1}{\sqrt{\left\vert y\right\vert ^{2}+1}}\left( 
\frac{2}{\left\vert x-y\right\vert }-\frac{2}{\sqrt{\left\vert x\right\vert
^{2}+1}}-\frac{1}{\left( \sqrt{\left\vert x\right\vert ^{2}+1}\right) ^{3}}%
\right) dx  \notag \\
&&-\frac{G\left( N,q\right) }{8}\int_{S^{3}}\left( \Delta G_{N}-\frac{5}{2}%
G_{N}\right) h_{ijij}d\mu -\frac{G\left( N,q\right) }{8}\int_{S^{3}}\left(
\Delta G_{N}+\frac{5}{16}G_{N}\right) trhd\mu  \notag \\
&&+\frac{1}{8}G\left( N,q\right) trh\left( N\right) .  \notag
\end{eqnarray}
\end{lemma}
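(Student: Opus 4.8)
The plan is to start from Lemma~\ref{lem2.3} with $p=N$, which gives
\[
I\left( N,q,h\right) =-\int_{S^{3}}P_{g,h}^{\left( 1\right) }G_{q}\cdot G_{N}\,d\mu -\tfrac{1}{2}G\left( N,q\right) trh\left( q\right) \qquad \text{(in the distribution sense),}
\]
and the first step is to transfer the operator onto $G_{N}$ -- the useful direction, because $P_{g,h}^{\left( 1\right) }G_{N}$ can be written down explicitly while $P_{g,h}^{\left( 1\right) }G_{q}$ cannot. Applying Lemma~\ref{lem2.2} with $\varphi =G_{q}$, $\psi =G_{N}$ (legitimate after approximating $G_{q}$ and $G_{N}$ in $H^{2}$ by smooth functions, exactly as in the proof of Lemma~\ref{lem2.2}), and using $PG_{q}=\delta _{q}$, $PG_{N}=\delta _{N}$ together with $G_{N}\left( q\right) =G\left( N,q\right) =G_{q}\left( N\right) $, one gets the equivalent identity
\[
I\left( N,q,h\right) =-\int_{S^{3}}G_{q}\cdot P_{g,h}^{\left( 1\right) }G_{N}\,d\mu -\tfrac{1}{2}G\left( N,q\right) trh\left( N\right) .
\]

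Next I would make $P_{g,h}^{\left( 1\right) }G_{N}$ explicit by conformal covariance. With $g=\tau ^{-4}\left\vert dx\right\vert ^{2}$, $h=\tau ^{-4}\theta $ and $\theta =\tau ^{4}h$, Lemma~\ref{lem2.1} gives $P_{g,h}^{\left( 1\right) }G_{N}=\tau ^{7}P_{\left\vert dx\right\vert ^{2},\theta }^{\left( 1\right) }\left( \tau G_{N}\right) $; the key point is that $\tau G_{N}\equiv -\frac{1}{4\sqrt{2}\pi }$ is constant, while the computation already carried out in the proof of Proposition~\ref{prop3.1} gives $P_{\left\vert dx\right\vert ^{2},\theta }^{\left( 1\right) }1=\frac{1}{8}\Delta \left( \theta _{ijij}-\Delta tr\theta \right) $, all derivatives now being flat. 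Hence $P_{g,h}^{\left( 1\right) }G_{N}=-\frac{\tau ^{7}}{32\sqrt{2}\pi }\Delta \left( \theta _{ijij}-\Delta tr\theta \right) $, and since $d\mu =\tau ^{-6}dx$,
\[
-\int_{S^{3}}G_{q}\cdot P_{g,h}^{\left( 1\right) }G_{N}\,d\mu =\frac{1}{32\sqrt{2}\pi }\int_{\mathbb{R}^{3}}\tau G_{q}\cdot \Delta \left( \theta _{ijij}-\Delta tr\theta \right) dx .
\]
Because $\tau G_{q}$ grows linearly in $\left\vert x\right\vert $ this last integral is only conditionally convergent (correspondingly, $P_{g,h}^{\left( 1\right) }G_{N}$ is only an $H^{-2}$ distribution, with a $\left\vert x\right\vert ^{3}$-type singularity at $N$ in the stereographic coordinate), so the identity must be read through the cut-off $1-\eta _{\varepsilon }$ of the proof of Proposition~\ref{prop3.1}.

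To separate out the absolutely convergent part I would write $G_{q}=G\left( N,q\right) +\left( G_{q}-G\left( N,q\right) \right) $. As $G_{q}$ is smooth at $N$ with $G_{q}\left( N\right) =G\left( N,q\right) $, the difference $G_{q}-G\left( N,q\right) $ vanishes at $N$, so $\left( G_{q}-G\left( N,q\right) \right) P_{g,h}^{\left( 1\right) }G_{N}$ is integrable on $S^{3}$; in the flat picture $\tau \left( G_{q}-G\left( N,q\right) \right) =\widetilde{w}:=-\frac{\left\vert x-y\right\vert -\sqrt{\left\vert x\right\vert ^{2}+1}}{4\sqrt{2}\pi \sqrt{\left\vert y\right\vert ^{2}+1}}$, which is bounded (here $y$ is the coordinate of $q$). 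Since $\theta _{ijij}-\Delta tr\theta =O^{\left( \infty \right) }\left( \left\vert x\right\vert ^{-2}\right) $, two integrations by parts -- the boundary terms at $\left\vert x\right\vert =\infty $ vanishing -- together with $\Delta \left\vert x-y\right\vert =2\left\vert x-y\right\vert ^{-1}$ and $\Delta \sqrt{\left\vert x\right\vert ^{2}+1}=2\left( \sqrt{\left\vert x\right\vert ^{2}+1}\right) ^{-1}+\left( \sqrt{\left\vert x\right\vert ^{2}+1}\right) ^{-3}$, turn $-\int_{S^{3}}\left( G_{q}-G\left( N,q\right) \right) P_{g,h}^{\left( 1\right) }G_{N}\,d\mu $ into exactly the first ($\mathbb{R}^{3}$) term of (\ref{eq3.5}). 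There remains $-G\left( N,q\right) \int_{S^{3}}P_{g,h}^{\left( 1\right) }G_{N}\,d\mu -\frac{1}{2}G\left( N,q\right) trh\left( N\right) $, and $\int_{S^{3}}P_{g,h}^{\left( 1\right) }G_{N}\,d\mu $ I would compute on $S^{3}$ itself: apply Lemma~\ref{lem2.2} with $\psi =1$, use $P_{g,h}^{\left( 1\right) }1=-\frac{1}{2}Q_{g,h}^{\left( 1\right) }$ where $Q_{g,h}^{\left( 1\right) }$ is the coefficient of $t$ in (\ref{eq2.4}) restricted to $S^{3}$ (so $Rc=2g$, $R=6$, $R_{i}=R_{ij}=0$), then $PG_{N}=\delta _{N}$, $\int_{S^{3}}G_{N}P\left( trh\right) d\mu =trh\left( N\right) $, $\Delta ^{2}G_{N}=PG_{N}-\frac{1}{2}\Delta G_{N}+\frac{15}{16}G_{N}$, and a few integrations by parts; this yields $\int_{S^{3}}P_{g,h}^{\left( 1\right) }G_{N}\,d\mu =\frac{1}{8}\int_{S^{3}}\left( \Delta G_{N}-\frac{5}{2}G_{N}\right) h_{ijij}d\mu +\frac{1}{8}\int_{S^{3}}\left( \Delta G_{N}+\frac{5}{16}G_{N}\right) trh\,d\mu -\frac{5}{8}trh\left( N\right) $, which together with the $-\frac{1}{2}G\left( N,q\right) trh\left( N\right) $ term reproduces the last three terms of (\ref{eq3.5}). (One could instead feed in the explicit formula (\ref{eqa.16}) for $P_{g,h}^{\left( 1\right) }$ on $S^{3}$.)

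The hard part is the bookkeeping around the non-integrable singularity of $P_{g,h}^{\left( 1\right) }G_{N}$ at $N$: the passage from $\int P_{g,h}^{\left( 1\right) }G_{q}\cdot G_{N}$ to $\int G_{q}\cdot P_{g,h}^{\left( 1\right) }G_{N}$, and the conditionally convergent flat integral, must be justified through the cut-offs with the limits $\varepsilon \rightarrow 0$, $\left\vert x\right\vert \rightarrow \infty $ taken consistently, and the splitting $G_{q}=G\left( N,q\right) +\left( G_{q}-G\left( N,q\right) \right) $ is the device that isolates the absolutely convergent contribution from the part carried by the value $G\left( N,q\right) $. Everything else is a long but mechanical computation, the delicate points being the numerical constants in the $S^{3}$-expansion of $Q_{g,h}^{\left( 1\right) }$ and the verification that the boundary terms at infinity really do vanish.
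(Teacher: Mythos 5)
Your proof is correct and follows essentially the same route as the paper: apply Lemma~\ref{lem2.3} (the paper transfers $P_{g,h}^{(1)}$ onto $G_N$ via the symmetry $I(N,q,h)=I(q,N,h)$, whereas you do it by one application of Lemma~\ref{lem2.2}, but both land on the identical intermediate identity $I(N,q,h)=-\int_{S^3}G_q\,P_{g,h}^{(1)}G_N\,d\mu-\tfrac12 G(N,q)\,trh(N)$), split $G_q=G(N,q)+(G_q-G(N,q))$, compute the absolutely convergent piece in the flat picture via $P_{g,h}^{(1)}G_N=\tau^7 P_{|dx|^2,\theta}^{(1)}(\tau G_N)$ with $\tau G_N$ constant, and evaluate $\int_{S^3}P_{g,h}^{(1)}G_N\,d\mu$ on the sphere using Lemma~\ref{lem2.2} with $\psi=1$ together with $PG_N=\delta_N$ and $P1=-\tfrac{15}{16}$. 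The numerical constants you report ($\tau G_N=-\tfrac{1}{4\sqrt2\pi}$, $P_{|dx|^2,\theta}^{(1)}1=\tfrac18\Delta(\theta_{ijij}-\Delta tr\theta)$, $\Delta\sqrt{|x|^2+1}=2(\sqrt{|x|^2+1})^{-1}+(\sqrt{|x|^2+1})^{-3}$, and the stated value of $\int_{S^3}P_{g,h}^{(1)}G_N\,d\mu$) all check out and reproduce~(\ref{eq3.5}).
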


\begin{proof}
Indeed it follows from Lemma \ref{lem2.3} that%
\begin{eqnarray*}
&&I\left( N,q,h\right) \\
&=&I\left( q,N,h\right) \\
&=&-\int_{S^{3}}P_{g,h}^{\left( 1\right) }G_{N}\cdot G_{q}d\mu -\frac{1}{2}%
G\left( N,q\right) trh\left( N\right) \\
&=&-\int_{S^{3}}P_{g,h}^{\left( 1\right) }G_{N}\cdot \left(
G_{q}-G_{q}\left( N\right) \right) d\mu -G\left( N,q\right)
\int_{S^{3}}P_{g,h}^{\left( 1\right) }G_{N}d\mu -\frac{1}{2}G\left(
N,q\right) trh\left( N\right) .
\end{eqnarray*}%
By Lemma \ref{lem2.2} we have%
\begin{eqnarray*}
&&\int_{S^{3}}P_{g,h}^{\left( 1\right) }G_{N}d\mu \\
&=&\int_{S^{3}}G_{N}P_{g,h}^{\left( 1\right) }1d\mu -\frac{1}{2}%
\int_{S^{3}}\left( PG_{N}-G_{N}\cdot P1\right) trhd\mu \\
&=&\int_{S^{3}}G_{N}\left( \frac{1}{8}\Delta \left( h_{ijij}\right) -\frac{5%
}{16}h_{ijij}\right) d\mu +\frac{1}{8}\int_{S^{3}}\left( \Delta G_{N}+\frac{5%
}{16}G_{N}\right) trhd\mu -\frac{5}{8}trh\left( N\right) .
\end{eqnarray*}%
Using the fact $G_{q}-G_{q}\left( N\right) $ vanishes at $N$, by the same
method in the proof of Proposition \ref{prop3.1},%
\begin{eqnarray*}
&&\int_{S^{3}}P_{g,h}^{\left( 1\right) }G_{N}\cdot \left( G_{q}-G_{q}\left(
N\right) \right) d\mu \\
&=&\lim_{\varepsilon \rightarrow 0^{+}}\int_{S^{3}\backslash B_{\varepsilon
}\left( N\right) }P_{g,h}^{\left( 1\right) }G_{N}\cdot \left(
G_{q}-G_{q}\left( N\right) \right) d\mu \\
&=&\frac{1}{32\pi ^{2}}\lim_{R\rightarrow \infty }\int_{\left\vert
x\right\vert \leq R}P_{\left\vert dx\right\vert ^{2},\theta }1\cdot \frac{%
\left\vert x-y\right\vert -\sqrt{\left\vert x\right\vert ^{2}+1}}{\sqrt{%
\left\vert y\right\vert ^{2}+1}}dx \\
&=&\frac{1}{256\pi ^{2}}\int_{\mathbb{R}^{3}}\Delta \left( \theta
_{ijij}-\Delta tr\theta \right) \cdot \frac{\left\vert x-y\right\vert -\sqrt{%
\left\vert x\right\vert ^{2}+1}}{\sqrt{\left\vert y\right\vert ^{2}+1}}dx \\
&=&\frac{1}{256\pi ^{2}}\int_{\mathbb{R}^{3}}\left( \theta _{ijij}-\Delta
tr\theta \right) \cdot \frac{1}{\sqrt{\left\vert y\right\vert ^{2}+1}}\left( 
\frac{2}{\left\vert x-y\right\vert }-\frac{2}{\sqrt{\left\vert x\right\vert
^{2}+1}}-\frac{1}{\left( \sqrt{\left\vert x\right\vert ^{2}+1}\right) ^{3}}%
\right) dx.
\end{eqnarray*}%
Equation (\ref{eq3.5}) follows.
\end{proof}

\begin{theorem}
\label{thm3.1}For any smooth symmetric $\left( 0,2\right) $ tensor $h$,
denote $\theta =\tau ^{4}h$, then%
\begin{eqnarray}
&&II\left( N,N,h\right)  \label{eq3.6} \\
&=&-\frac{1}{128\pi ^{2}}\int_{\mathbb{R}^{3}}\left( \sum_{ij}\left( \theta
_{ikjk}+\theta _{jkik}-\left( tr\theta \right) _{ij}-\Delta \theta
_{ij}\right) ^{2}-\frac{3}{2}\left( \theta _{ijij}-\Delta tr\theta \right)
^{2}\right) dx.  \notag
\end{eqnarray}
\end{theorem}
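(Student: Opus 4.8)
The starting point is the formula of Lemma~\ref{lem2.3} for $II\left( N,N,h\right) $. Since $G\left( N,N\right) =0$ and, by Proposition~\ref{prop3.1}, $I\left( N,N,h\right) =0$, that formula collapses to
\[
II\left( N,N,h\right) =-\int_{S^{3}}\left( P_{g,h}^{\left( 2\right) }G_{N}\cdot G_{N}+P_{g,h}^{\left( 1\right) }G_{N}\cdot I_{N}+\frac{1}{2}P_{g,h}^{\left( 1\right) }G_{N}\cdot G_{N}\,trh\right) d\mu .
\]
The plan is to evaluate the three terms by pushing everything to $\mathbb{R}^{3}$ via conformal covariance. The key algebraic fact is that $\tau G_{N}$ is a constant, so Lemma~\ref{lem2.1} with $\rho =\tau $ and base metric $\left\vert dx\right\vert ^{2}$ gives $P_{g,h}^{\left( k\right) }G_{N}=\left( \text{const}\right) \tau ^{7}P_{\left\vert dx\right\vert ^{2},\theta }^{\left( k\right) }1$; combined with $d\mu =\tau ^{-6}dx$, $G_{N}=\left( \text{const}\right) \tau ^{-1}$ and $trh=tr\theta $, all powers of $\tau $ cancel in the first and third integrals. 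On the flat metric $Rc=R=Q=0$, so (\ref{eq2.3}) with $\varphi =1$ reduces to $P_{\left\vert dx\right\vert ^{2},\theta }^{\left( 1\right) }1=\frac{1}{8}\Delta \left( \theta _{ijij}-\Delta tr\theta \right) $ (already used in the proof of Proposition~\ref{prop3.1}), while (\ref{eq2.5}) reduces to a sum of pure divergences $\Delta \left( \cdots \right) $ together with the four terms $\frac{1}{4}\sum_{ij}\left( \theta _{ikjk}+\theta _{jkik}-\left( tr\theta \right) _{ij}-\Delta \theta _{ij}\right) ^{2}$, $-\frac{1}{8}\theta _{ij}\left( \theta _{klkl}-\Delta tr\theta \right) _{ij}$, $-\frac{1}{16}\left( 2\theta _{ijj}-\left( tr\theta \right) _{i}\right) \left( \theta _{klkl}-\Delta tr\theta \right) _{i}$ and $-\frac{23}{64}\left( \theta _{ijij}-\Delta tr\theta \right) ^{2}$.

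For the cross term $-\int_{S^{3}}P_{g,h}^{\left( 1\right) }G_{N}\cdot I_{N}\,d\mu $ I would use Lemma~\ref{lem3.1}: after multiplying by $\tau $, the singular factors $\left( \left\vert y\right\vert ^{2}+1\right) ^{-1/2}$ and $G\left( N,q\right) $ appearing there become constants, so $\tau I_{N}=W+c$ with $c$ a constant and
\[
W\left( x\right) =-\frac{1}{256\sqrt{2}\pi ^{2}}\int_{\mathbb{R}^{3}}\left( \theta _{ijij}-\Delta tr\theta \right) \left( \frac{2}{\left\vert z-x\right\vert }-\frac{2}{\sqrt{\left\vert z\right\vert ^{2}+1}}-\frac{1}{\left( \sqrt{\left\vert z\right\vert ^{2}+1}\right) ^{3}}\right) dz .
\]
Since $\Delta _{x}\left\vert z-x\right\vert ^{-1}=-4\pi \delta \left( z-x\right) $ we get $\Delta W=\frac{1}{32\sqrt{2}\pi }\left( \theta _{ijij}-\Delta tr\theta \right) $; as $P_{g,h}^{\left( 1\right) }G_{N}=-\frac{1}{32\sqrt{2}\pi }\tau ^{7}\Delta \left( \theta _{ijij}-\Delta tr\theta \right) $ and $I_{N}=\tau ^{-1}\left( W+c\right) $, the powers of $\tau $ cancel once more and two integrations by parts give $-\int_{S^{3}}P_{g,h}^{\left( 1\right) }G_{N}\cdot I_{N}\,d\mu =\frac{1}{2048\pi ^{2}}\int_{\mathbb{R}^{3}}\left( \theta _{ijij}-\Delta tr\theta \right) ^{2}dx$. (Equivalently, expanding $\int_{M}P_{g+th}\varphi \cdot G_{g+th,p}\,d\mu _{g+th}=\varphi \left( p\right) $ to order $t$ and applying Lemma~\ref{lem2.2} shows $PI_{N}=-P_{g,h}^{\left( 1\right) }G_{N}-\frac{1}{2}trh\left( N\right) \delta _{N}$ in the distributional sense, so this term equals $E\left( I_{N}\right) $, which is computed the same way.)

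As in the proof of Proposition~\ref{prop3.1}, all the improper and distributional integrals above are justified by inserting the cutoff $\eta _{\varepsilon }$ near $N$, passing to integrals over $\left\{ \left\vert x\right\vert \leq R\right\} $, and using $\theta _{ij}=O^{\left( \infty \right) }\left( 1\right) $ to kill the boundary integrals over $\left\{ \left\vert x\right\vert =R\right\} $ as $R\rightarrow \infty $; in particular every pure-divergence term $\Delta \left( \cdots \right) $ from (\ref{eq2.5}) integrates to zero. It then remains to add up the surviving contributions. The term $\frac{1}{4}\sum_{ij}\left( \cdots \right) ^{2}$ from (\ref{eq2.5}) passes through untouched and produces exactly the $-\frac{1}{128\pi ^{2}}\sum_{ij}\left( \theta _{ikjk}+\theta _{jkik}-\left( tr\theta \right) _{ij}-\Delta \theta _{ij}\right) ^{2}$ part of (\ref{eq3.6}). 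Writing $A=\theta _{ijij}-\Delta tr\theta $ and $D=\Delta tr\theta $, a short sequence of integrations by parts on $\mathbb{R}^{3}$ --- using $\int \theta _{ij}A_{ij}=\int \left( A+D\right) A$, $\int \left( 2\theta _{ijj}-\left( tr\theta \right) _{i}\right) A_{i}=-\int \left( 2A+D\right) A$ and $\int \Delta A\cdot tr\theta =\int AD$ --- turns the remaining three terms of (\ref{eq2.5}), together with $-\frac{1}{2}\int_{S^{3}}P_{g,h}^{\left( 1\right) }G_{N}\cdot G_{N}\,trh\,d\mu $ and the cross term, into linear combinations of $\int_{\mathbb{R}^{3}}A^{2}$ and $\int_{\mathbb{R}^{3}}DA$; the $DA$-coefficients cancel and the $A^{2}$-coefficients sum to $\frac{3}{256\pi ^{2}}=-\frac{1}{128\pi ^{2}}\cdot \left( -\frac{3}{2}\right) $, which is the remaining part of (\ref{eq3.6}).

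The main obstacle is twofold. First, one must carry out the flat-metric reduction of the long formula (\ref{eq2.5}) reliably --- all the $Rc$- and $R$-dependent terms have to disappear and the remainder must be reorganized into pure divergences plus the four terms above, a long though mechanical computation. Second, the cross term is the only place where the first-variation data $I_{N}$ re-enters, and both extracting the structure $\tau I_{N}=W+c$ from Lemma~\ref{lem3.1} (or establishing the distributional identity for $PI_{N}$) and keeping careful track of the constants and of the vanishing of the boundary terms throughout demand genuine care; this bookkeeping, rather than any single hard estimate, is where the work lies.
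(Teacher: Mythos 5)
Your proposal is correct and follows essentially the same route as the paper's proof: decompose $II(N,N,h)$ via Lemma~\ref{lem2.3} and $G_N(N)=I(N,N,h)=0$ into three integrals, transport each to $\mathbb{R}^{3}$ by the conformal covariance of $P^{(k)}_{g,h}$ (using that $\tau G_N$ is constant), use $P^{(1)}_{|dx|^2,\theta}1=\tfrac18\Delta A$ and the flat-metric form of~(\ref{eq2.5}), handle the cross term with Lemma~\ref{lem3.1} (your $\tau I_N=W+c$ decomposition and $\Delta W = \tfrac{1}{32\sqrt 2\pi}A$ compute the same integral as the paper's double integral), and kill boundary terms using $\theta_{ij}=O^{(\infty)}(1)$. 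Your bookkeeping with $A=\theta_{ijij}-\Delta tr\theta$ and $D=\Delta tr\theta$ reproduces the paper's final coefficients correctly (the $DA$ contributions cancel and the $A^{2}$ contributions sum to $\tfrac{3}{256\pi^{2}}$), and the parenthetical alternative via the distributional identity $PI_N=-P^{(1)}_{g,h}G_N-\tfrac12 trh(N)\delta_N$ is a valid variant consistent with Lemmas~\ref{lem2.2} and~\ref{lem2.3}.
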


Theorem \ref{thm1.1} follows from Theorem \ref{thm3.1} and (\ref{eq2.9}).

\begin{proof}
By Lemma \ref{lem2.3},%
\begin{eqnarray}
&&II\left( N,N,h\right)  \label{eq3.7} \\
&=&-\int_{S^{3}}\left( P_{g,h}^{\left( 2\right) }G_{N}\cdot
G_{N}+P_{g,h}^{\left( 1\right) }G_{N}\cdot I_{N}+\frac{1}{2}P_{g,h}^{\left(
1\right) }G_{N}\cdot G_{N}trh\right) d\mu .  \notag
\end{eqnarray}%
First we note that because $G_{N}\left( N\right) =0$, the same argument as
in the proof of Proposition \ref{prop3.1} shows%
\begin{eqnarray*}
&&\int_{S^{3}}P_{g,h}^{\left( 2\right) }G_{N}\cdot G_{N}d\mu \\
&=&\lim_{\varepsilon \rightarrow 0^{+}}\int_{S^{3}\backslash B_{\varepsilon
}\left( N\right) }P_{g,h}^{\left( 2\right) }G_{N}\cdot G_{N}d\mu \\
&=&\frac{1}{32\pi ^{2}}\lim_{R\rightarrow \infty }\int_{\left\vert
x\right\vert \leq R}P_{\left\vert dx\right\vert ^{2},\theta }^{\left(
2\right) }1dx \\
&=&\frac{1}{128\pi ^{2}}\int_{\mathbb{R}^{3}}\left[ \sum_{ij}\left( \theta
_{ikjk}+\theta _{jkik}-\left( tr\theta \right) _{ij}-\Delta \theta
_{ij}\right) ^{2}\right. \\
&&\left. -\frac{1}{16}\left( 23\theta _{ijij}-19\Delta tr\theta \right)
\left( \theta _{klkl}-\Delta tr\theta \right) \right] dx,
\end{eqnarray*}%
Here we have used (\ref{eq2.5}). Next using $I_{N}\left( N\right) =0$ we have%
\begin{eqnarray*}
\int_{S^{3}}P_{g,h}^{\left( 1\right) }G_{N}\cdot I_{N}d\mu
&=&\lim_{\varepsilon \rightarrow 0^{+}}\int_{S^{3}\backslash B_{\varepsilon
}\left( N\right) }P_{g,h}^{\left( 1\right) }G_{N}\cdot I_{N}d\mu \\
&=&-\frac{1}{4\pi \sqrt{2}}\lim_{R\rightarrow \infty }\int_{\left\vert
x\right\vert \leq R}P_{\left\vert dx\right\vert ^{2},\theta }^{\left(
1\right) }1\cdot \tau I_{N}dx.
\end{eqnarray*}%
Since%
\begin{equation*}
\lim_{R\rightarrow \infty }\int_{\left\vert x\right\vert \leq
R}P_{\left\vert dx\right\vert ^{2},\theta }^{\left( 1\right) }1\cdot \tau
G_{N}dx=0,
\end{equation*}%
(see the proof of Proposition \ref{prop3.1}), by Lemma \ref{lem3.1} we have%
\begin{eqnarray*}
&&\int_{S^{3}}P_{g,h}^{\left( 1\right) }G_{N}\cdot I_{N}d\mu \\
&=&\frac{1}{2048\pi ^{3}}\int_{\mathbb{R}^{3}}P_{\left\vert dx\right\vert
^{2},\theta }^{\left( 1\right) }1\left( \int_{\mathbb{R}^{3}}\left( \theta
_{ijij}-\Delta tr\theta \right) \left( y\right) \left( \frac{2}{\left\vert
x-y\right\vert }-\frac{2}{\sqrt{\left\vert y\right\vert ^{2}+1}}-\frac{1}{%
\left( \sqrt{\left\vert y\right\vert ^{2}+1}\right) ^{3}}\right) dy\right) dx
\\
&=&\frac{1}{16384\pi ^{3}}\int_{\mathbb{R}^{3}}dy\left( \theta
_{ijij}-\Delta tr\theta \right) \left( y\right) \int_{\mathbb{R}^{3}}\Delta
\left( \theta _{klkl}-\Delta tr\theta \right) \left( x\right) \left( \frac{2%
}{\left\vert x-y\right\vert }-\frac{2}{\sqrt{\left\vert y\right\vert ^{2}+1}}%
-\frac{1}{\left( \sqrt{\left\vert y\right\vert ^{2}+1}\right) ^{3}}\right) dx
\\
&=&-\frac{1}{2048\pi ^{2}}\int_{\mathbb{R}^{3}}\left( \theta _{ijij}-\Delta
tr\theta \right) ^{2}dx.
\end{eqnarray*}%
Similarly for the third term in (\ref{eq3.7}) we have%
\begin{eqnarray*}
\int_{S^{3}}\frac{1}{2}P_{g,h}^{\left( 1\right) }G_{N}\cdot G_{N}trhd\mu &=&%
\frac{1}{2}\lim_{\varepsilon \rightarrow 0^{+}}\int_{S^{3}\backslash
B_{\varepsilon }\left( N\right) }P_{g,h}^{\left( 1\right) }G_{N}\cdot
G_{N}trhd\mu \\
&=&\frac{1}{64\pi ^{2}}\lim_{R\rightarrow \infty }\int_{\left\vert
x\right\vert \leq R}P_{\left\vert dx\right\vert ^{2},\theta }^{\left(
1\right) }1\cdot tr\theta dx \\
&=&\frac{1}{512\pi ^{2}}\int_{\mathbb{R}^{3}}\Delta \left( \theta
_{ijij}-\Delta tr\theta \right) \cdot tr\theta dx \\
&=&\frac{1}{512\pi ^{2}}\int_{\mathbb{R}^{3}}\Delta tr\theta \cdot \left(
\theta _{ijij}-\Delta tr\theta \right) dx.
\end{eqnarray*}%
Sum up we get (\ref{eq3.6}).
\end{proof}

Next we will study sign of the second variation. For convenience we write%
\begin{equation}
II\left( h\right) =II\left( N,N,h\right) .  \label{eq3.8}
\end{equation}%
First we observe that by conformal covariant property, for any smooth vector
field $X$ and function $f$,%
\begin{equation}
II\left( L_{X}g+fg\right) =0.  \label{eq3.9}
\end{equation}%
Indeed let $\phi _{t}$ be the flow generated by $X$, then for $t$ near $0$,%
\begin{eqnarray*}
G_{\left( 1+tf\right) \phi _{t}^{\ast }g}\left( N,N\right) &=&\left(
1+tf\left( N\right) \right) ^{\frac{1}{2}}G_{\phi _{t}^{\ast }g}\left(
N,N\right) \\
&=&\left( 1+tf\left( N\right) \right) ^{\frac{1}{2}}G_{g}\left( \phi
_{t}\left( N\right) ,\phi _{t}\left( N\right) \right) =0.
\end{eqnarray*}%
Since $\left. \partial _{t}\right\vert _{t=0}\left( \left( 1+tf\right) \phi
_{t}^{\ast }g\right) =L_{X}g+fg$, (\ref{eq3.9}) follows. In fact we can say
a little more: let $II\left( h,k\right) $ be the symmetric form associated
with $II\left( h\right) $, if%
\begin{equation}
\theta =\tau ^{4}h,\quad \kappa =\tau ^{4}k,  \label{eq3.10}
\end{equation}%
then it follows from Theorem \ref{thm3.1} that%
\begin{eqnarray}
&&II\left( h,k\right)  \label{eq3.11} \\
&=&-\frac{1}{128\pi ^{2}}\int_{\mathbb{R}^{3}}\left[ \sum_{ij}\left( \theta
_{ikjk}+\theta _{jkik}-\left( tr\theta \right) _{ij}-\Delta \theta
_{ij}\right) \left( \kappa _{iljl}+\kappa _{jlil}-\left( tr\kappa \right)
_{ij}-\Delta \kappa _{ij}\right) \right.  \notag \\
&&\left. -\frac{3}{2}\left( \theta _{ijij}-\Delta tr\theta \right) \left(
\kappa _{klkl}-\Delta tr\kappa \right) \right] dx.  \notag
\end{eqnarray}

\begin{lemma}
\label{lem3.2}Given smooth symmetric $\left( 0,2\right) $ tensor $h$, vector
field $X$ and function $f$, we have%
\begin{equation}
II\left( h,L_{X}g+fg\right) =0.  \label{eq3.12}
\end{equation}
\end{lemma}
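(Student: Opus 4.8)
The plan is to derive Lemma~\ref{lem3.2} from the conformal and diffeomorphism covariance of the Green's function, together with the vanishing of the \emph{first} variation at \emph{every} point of $S^3$ (Proposition~\ref{prop3.1}). Although $II(h,k)$ is bilinear and one could in principle treat $k=L_Xg$ and $k=fg$ separately, it is cleaner to handle both at once with a single two-parameter family. Let $\phi_t$ be the flow generated by $X$ and set
\begin{equation*}
g_{s,t}=\left(1+tf\right)\phi_t^{\ast}\left(g+sh\right)=A(t)+sB(t),
\end{equation*}
where $A(t)=(1+tf)\phi_t^{\ast}g$ and $B(t)=(1+tf)\phi_t^{\ast}h$; this is a smooth metric for $s,t$ near $0$ (the condition $\ker P=0$ is open, so every Green's function below is defined) and is affine in $s$. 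Write $\Gamma(s,t)=G_{g_{s,t}}(N,N)$.

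First I would evaluate $\Gamma$ by covariance. By the transformation law (\ref{eq1.4}) and the diffeomorphism invariance $G_{\psi^{\ast}\bar g}(p,q)=G_{\bar g}(\psi(p),\psi(q))$,
\begin{equation*}
\Gamma(s,t)=\left(1+tf(N)\right)^{1/2}G_{g+sh}\left(\phi_t(N),\phi_t(N)\right).
\end{equation*}
On the standard sphere $G_g$ vanishes on the diagonal, so $\Gamma(0,t)\equiv 0$; moreover $\partial_s\big|_{s=0}\Gamma(s,t)=(1+tf(N))^{1/2}\,I\!\left(\phi_t(N),\phi_t(N),h\right)$, which vanishes for \emph{every} $t$ by Proposition~\ref{prop3.1}. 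Hence $\partial_t\big|_{t=0}\partial_s\big|_{s=0}\Gamma(s,t)=0$.

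Second I would compute the same mixed derivative on the metric side. Since $g_{s,t}=A(t)+sB(t)$ with $A(0)=g$, $B(0)=h$, $A'(0)=L_Xg+fg$ and $B'(0)=L_Xh+fh$, the chain rule (using that the pole value $G_\cdot(N,N)$ depends smoothly on the metric near $g$, together with (\ref{eq2.7})--(\ref{eq2.9}) and the fact that $II(\cdot,\cdot)$ is the polarization of $II(\cdot)$) gives
\begin{equation*}
\partial_t\big|_{t=0}\partial_s\big|_{s=0}\Gamma(s,t)=2\,II\!\left(h,L_Xg+fg\right)+I\!\left(N,N,L_Xh+fh\right).
\end{equation*}
By Proposition~\ref{prop3.1} the last term vanishes, so comparing with the previous paragraph gives $II(h,L_Xg+fg)=0$, which is the lemma.

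The step that needs the most care is this last identification of the mixed second derivative: one must know that the pole value of the Green's function is a $C^2$ function of the metric near $g$ so that the chain rule is legitimate and the cross term yields exactly $2\,II(h,L_Xg+fg)$ — the polarized quadratic form, with the correct constant — and one must invoke Proposition~\ref{prop3.1} in its pointwise form at the moving point $\phi_t(N)$, not merely at $N$. One could instead try to prove the lemma directly from formula (\ref{eq3.11}) by integrating by parts against the operator $\theta\mapsto\theta_{ikjk}+\theta_{jkik}-(\mathrm{tr}\theta)_{ij}-\Delta\theta_{ij}$ (which annihilates flat Hessians and symmetrized gradients of vector fields on $\mathbb{R}^3$), but since $\tau^4(L_Xg+fg)$ also carries a conformal part that must be tracked together with the $-\tfrac{3}{2}$ term, this route is computationally heavier; I would present the covariance argument.
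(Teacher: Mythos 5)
Your proof is correct, and it takes a genuinely different route from the paper's. The paper proves Lemma~\ref{lem3.2} by direct computation: it first handles $II(h,fg)=0$ by integrating the explicit formula (\ref{eq3.11}) by parts against $\kappa_{ij}=f\delta_{ij}$, and then handles $II(h,L_Xg)=0$ by using Lemma~\ref{lem3.3} and Lemma~\ref{lem3.4} to reduce to the case $h(N)=0$, $Dh(N)=0$ (so that $\theta_{ij}=O^{(\infty)}(|x|^{-2})$), after which it writes out $\kappa=\tau^4 L_Xg$ explicitly in terms of $\tau^{-1}X\tau$ and integrates by parts again. Your argument instead exploits covariance of the Green's function applied to the two-parameter family $g_{s,t}=(1+tf)\phi_t^*(g+sh)$; the mixed partial $\partial_t\partial_s\Gamma(0,0)$ is computed on the one hand by the chain rule as $2\,II(h,L_Xg+fg)+I(N,N,L_Xh+fh)$ and on the other hand vanishes because $\partial_s\Gamma(0,t)=(1+tf(N))^{1/2}I(\phi_t(N),\phi_t(N),h)\equiv 0$, which is exactly Proposition~\ref{prop3.1} in its pointwise form. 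Both terms in the chain-rule expansion then kill themselves via Proposition~\ref{prop3.1} and the lemma follows.

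The two approaches buy different things. Your covariance argument is cleaner and more conceptual: it requires no integration by parts, no Lemma~\ref{lem3.3}/\ref{lem3.4}, and not even the explicit formula (\ref{eq3.11}); it also makes transparent that the result is a general consequence of two facts --- conformal/diffeomorphism covariance of $G(p,p)$, and the vanishing of the first variation $I(p,p,\cdot)$ at every point of the reference manifold. The paper's computational proof has the merit of exercising the explicit expression (\ref{eq3.11}), which is also needed for the positivity analysis in Proposition~\ref{prop3.2}, and of isolating Lemma~\ref{lem3.3} which is reused there. The one point you flag --- that the pole value $G_{g'}(N,N)$ must depend $C^2$ on $g'$ near the round metric so that the joint Taylor expansion and chain rule are legitimate, with the correct factor of $2$ on the polarized form --- is a real hypothesis, but it is already implicit in the paper's use of the expansion (\ref{eq2.7}) (and holds since $\ker P_{g'}=0$ is an open condition and the Green's function depends smoothly on the coefficients of $P_{g'}$). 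Your identification of the polarization constant, $D^2\Phi(g)[k_1,k_2]=2\,II(k_1,k_2)$, is consistent with (\ref{eq2.9}), so the bookkeeping checks out.
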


To achieve this we need the following technical fact:

\begin{lemma}
\label{lem3.3}If $h$ is a smooth symmetric $\left( 0,2\right) $ tensor on $%
S^{3}$, then there exists a smooth vector field $X$ such that%
\begin{equation*}
\left( h-L_{X}g\right) \left( N\right) =0,\quad D\left( h-L_{X}g\right)
\left( N\right) =0
\end{equation*}
\end{lemma}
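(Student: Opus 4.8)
The plan is to reduce the statement to prescribing the $2$-jet of $X$ at $N$. Indeed, the conclusion only constrains the value and the first derivative of $h-L_{X}g$ at the single point $N$, so once we have a polynomial vector field with the right $2$-jet in a chart around $N$ we may multiply it by a cutoff function equal to $1$ near $N$ and obtain a global smooth vector field on $S^{3}$ with the same $2$-jet at $N$; altering $X$ away from $N$ affects neither $(L_{X}g)(N)$ nor its first derivative there. So I would fix Riemannian normal coordinates $x=(x^{1},x^{2},x^{3})$ centered at $N$ (a suitably rescaled stereographic chart centered at the antipode of $N$ serves equally well), so that $g_{ij}(0)=\delta_{ij}$, $\partial_{k}g_{ij}(0)=0$, and at the origin covariant derivatives of tensors agree with coordinate derivatives; and I would look for $X=X^{k}\partial_{k}$ with $X(0)=0$.

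The first step is to record that for such $X$ the formula $(L_{X}g)_{ij}=X^{k}\partial_{k}g_{ij}+g_{kj}\partial_{i}X^{k}+g_{ik}\partial_{j}X^{k}$, together with the normal-coordinate conditions and $X(0)=0$, collapses at the origin to
\[
(L_{X}g)_{ij}(0)=\partial_{i}X^{j}(0)+\partial_{j}X^{i}(0),\qquad \partial_{l}(L_{X}g)_{ij}(0)=\partial_{l}\partial_{i}X^{j}(0)+\partial_{l}\partial_{j}X^{i}(0),
\]
every other term (those carrying $X^{k}(0)$, a first derivative of $g$, or the curvature-type term $X^{k}\partial_{l}\partial_{k}g_{ij}$) having dropped out. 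Thus matching $h$ to first order at $N$ becomes the purely algebraic problem of choosing the linear and quadratic Taylor coefficients of the $X^{k}$; note the two displayed equations decouple, the first involving only the linear part and the second only the quadratic part, and a parameter count ($9$ coefficients against $6$ equations, then $18$ against $18$) is already consistent with solvability.

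For the value I would take the linear part of $X^{j}$ to be $\tfrac12 h_{jl}(0)\,x^{l}$, so that $\partial_{i}X^{j}(0)=\tfrac12 h_{ij}(0)$ and the symmetrized expression above equals $h_{ij}(0)$; adding an infinitesimal rotation to the linear part remains admissible. For the first derivative, put $T_{lij}=\partial_{l}h_{ij}(0)$, symmetric in $i$ and $j$; I must find numbers $S_{lij}$, which as second partials $\partial_{l}\partial_{i}X^{j}(0)$ are automatically symmetric in $l$ and $i$, with $S_{lij}+S_{lji}=T_{lij}$, and then take the quadratic part of $X^{j}$ to be $\tfrac12 S_{lij}\,x^{l}x^{i}$. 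The choice that works is the Koszul-type expression $S_{lij}=\tfrac12(T_{lij}+T_{ijl}-T_{jli})$: using only that $T$ is symmetric in its last two slots, one checks both $S_{ilj}=S_{lij}$ and $S_{lij}+S_{lji}=T_{lij}$. Together with $X(0)=0$ this fixes the required $2$-jet, and the cutoff extension yields the global smooth $X$.

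The only step with genuine content is this last algebraic identity, which is precisely the analogue of the Koszul formula recovering the Christoffel symbols from the first derivatives of the metric. The point to verify with care is the reduction in the second paragraph — that imposing $X(0)=0$ in normal coordinates really annihilates every term involving a derivative of $g$ in $\partial_{l}(L_{X}g)_{ij}(0)$, in particular the curvature-carrying term $X^{k}(0)\,\partial_{l}\partial_{k}g_{ij}(0)$ — after which the argument is elementary linear algebra and involves no analysis on $S^{3}$.
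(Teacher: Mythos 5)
Your proof is correct, and the reduction you flag as the ``step to verify with care'' is indeed sound: in normal coordinates with $X(0)=0$, every term of $\partial_{l}(L_{X}g)_{ij}$ carrying either $X^{k}(0)$ or a first derivative of $g$ vanishes at the origin, and since the Christoffel symbols also vanish there, the remaining coordinate derivative equals the covariant derivative $D(L_{X}g)(N)$, which is what the lemma actually asks for. The Koszul-type formula $S_{lij}=\tfrac12(T_{lij}+T_{ijl}-T_{jli})$ does satisfy both $S_{lij}=S_{ilj}$ and $S_{lij}+S_{lji}=T_{lij}$ given only the symmetry of $T$ in its last two slots, so the quadratic part of $X$ works.

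Your route is genuinely different from the paper's. The paper does not pass to normal coordinates: it works in an arbitrary chart, replaces $X$ by its dual $1$-form $\alpha_{i}=g_{ij}X^{j}$, and writes the condition as $\partial_{i}\alpha_{j}+\partial_{j}\alpha_{i}-2\Gamma^{k}_{ij}\alpha_{k}=h_{ij}+O(|y|^{2})$, which it then solves order by order: the constant term by the same explicit linear choice $\alpha^{(1)}_{i}=\tfrac12 h_{ik}(0)y_{k}$, and the linear term via a separate Lemma~\ref{lem3.4} asserting that $(A_{1},A_{2},A_{3})\mapsto(\partial_{i}A_{j}+\partial_{j}A_{i})$ is an isomorphism from triples of degree-$2$ homogeneous polynomials onto symmetric matrices of degree-$1$ entries, proved by a dimension count ($18=18$) plus a kernel computation using Euler's identity. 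Your version trades that abstract existence lemma for an explicit Koszul formula and uses normal coordinates to kill the Christoffel term outright, which makes the argument shorter and constructive; the paper's version keeps the Christoffel term but gains a standalone algebraic lemma (uniqueness included) that is slightly more general than what the statement strictly requires. Both are elementary and both prove the lemma.
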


To derive this lemma, we start with the following linear algebra fact.

\begin{lemma}
\label{lem3.4}Denote%
\begin{equation*}
\mathcal{P}_{m}=\left\{ \text{homogeneous degree }m\text{ polynomials on }%
\mathbb{R}^{3}\right\} .
\end{equation*}%
If for $1\leq i,j\leq 3$, $H_{ij}\in \mathcal{P}_{1}$, $H_{ij}=H_{ji}$, then
there exists unique $A_{i}\in \mathcal{P}_{2}$ such that%
\begin{equation*}
\partial _{i}A_{j}+\partial _{j}A_{i}=H_{ij}.
\end{equation*}
\end{lemma}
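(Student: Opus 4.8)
The plan is to pin $A_i$ down by a standard ``Killing operator'' manipulation — the same computation that recovers the Levi-Civita connection from a metric — which simultaneously yields uniqueness and an explicit formula, and then to verify that formula directly.

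First I would suppose $A_i\in\mathcal{P}_2$ solves $\partial_iA_j+\partial_jA_i=H_{ij}$. Differentiating in $x_k$, permuting the indices $i,j,k$ cyclically, and forming the usual combination gives
\[
2\,\partial_i\partial_jA_k=\partial_iH_{jk}+\partial_jH_{ik}-\partial_kH_{ij}.
\]
Since each $H_{ij}$ is homogeneous of degree $1$, the right-hand side is a constant, so the Hessian of $A_k$ is the constant symmetric matrix with $(i,j)$-entry $\tfrac12\bigl(\partial_iH_{jk}+\partial_jH_{ik}-\partial_kH_{ij}\bigr)$; as $A_k\in\mathcal{P}_2$ has no linear or constant part, Taylor's formula forces
\[
A_k(x)=\frac14\sum_{i,j}\bigl(\partial_iH_{jk}+\partial_jH_{ik}-\partial_kH_{ij}\bigr)x_ix_j .
\]
This already establishes uniqueness.

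For existence I would take the last display as the \emph{definition} of $A_k$: the bracket is symmetric in $i$ and $j$ because $H_{ij}=H_{ji}$, so each $A_k$ is a well-defined element of $\mathcal{P}_2$, and a short computation gives $\partial_a\partial_bA_c=\tfrac12(\partial_aH_{bc}+\partial_bH_{ac}-\partial_cH_{ab})$. It remains to check $\partial_iA_j+\partial_jA_i=H_{ij}$. Differentiating the left side in $x_l$ and substituting this Hessian identity, the six terms collapse and one obtains $\partial_l(\partial_iA_j+\partial_jA_i)=\partial_lH_{ij}$ for every $l$. Since $A_i\in\mathcal{P}_2$ makes $\partial_iA_j+\partial_jA_i$ homogeneous of degree $1$, as is $H_{ij}$, agreement of all first derivatives upgrades to equality of the polynomials, which is the claim.

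The whole argument is elementary; the only place requiring care is the index bookkeeping when verifying that the six terms cancel, together with the observation that homogeneity is precisely what turns ``same gradient'' into ``same polynomial''. (Alternatively one may argue by dimension: both $\{A_i\}_i$ and $\{H_{ij}\}_{i\le j}$ range over an $18$-dimensional space, the map $A\mapsto(\partial_iA_j+\partial_jA_i)$ is linear, and its kernel consists of homogeneous degree $2$ Euclidean Killing fields; but every Euclidean Killing field is affine, so the kernel is $\{0\}$ and the map is an isomorphism.)
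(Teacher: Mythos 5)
Your argument is correct and takes a genuinely different route from the paper's. You use the classical ``Christoffel trick'': cyclically permuting indices in $\partial_k(\partial_iA_j+\partial_jA_i)$ to obtain $2\,\partial_i\partial_jA_k=\partial_iH_{jk}+\partial_jH_{ik}-\partial_kH_{ij}$, and hence the explicit formula $A_k=\tfrac14\sum_{i,j}(\partial_iH_{jk}+\partial_jH_{ik}-\partial_kH_{ij})x_ix_j$. This gives uniqueness for free, and you then verify directly that this formula solves the system, using that two homogeneous polynomials of degree $1$ with equal gradients coincide. The paper's proof is instead a pure dimension count: it notes that the space $\mathcal{X}$ of triples $(A_1,A_2,A_3)$ with $A_i\in\mathcal{P}_2$ and the space $\mathcal{Y}$ of symmetric arrays $(H_{ij})$ with $H_{ij}\in\mathcal{P}_1$ both have dimension $18$, so it suffices to show the linear map $\phi:(A_i)\mapsto(\partial_iA_j+\partial_jA_i)$ is injective; for that it contracts $\partial_iA_j+\partial_jA_i=0$ against $x_ix_j$, uses Euler's identity $x_i\partial_iA_j=2A_j$ to deduce $x_iA_i=0$, and differentiates once more to get $A_j=0$. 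Your parenthetical remark is essentially this second argument, phrased as ``degree-$2$ homogeneous Euclidean Killing vector fields vanish because Killing fields are affine.'' Your main route has the merit of producing a closed-form inverse; the paper's route is shorter and avoids having to verify the formula, at the cost of being non-constructive. Both are complete and correct.
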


\begin{proof}
Let%
\begin{equation*}
\mathcal{X}=\left\{ \left[ 
\begin{array}{c}
A_{1} \\ 
A_{2} \\ 
A_{3}%
\end{array}%
\right] :A_{i}\in \mathcal{P}_{2}\right\}
\end{equation*}%
and%
\begin{equation*}
\mathcal{Y}=\left\{ \left[ H_{ij}\right] _{1\leq i,j\leq 3}:H_{ij}\in 
\mathcal{P}_{1},H_{ij}=H_{ji}\right\} .
\end{equation*}%
Note $\dim \mathcal{X}=\dim \mathcal{Y}=18$. Let%
\begin{equation*}
\phi :\mathcal{X}\rightarrow \mathcal{Y}:\left[ 
\begin{array}{c}
A_{1} \\ 
A_{2} \\ 
A_{3}%
\end{array}%
\right] \mapsto \left[ H_{ij}\right] _{1\leq i,j\leq 3}
\end{equation*}%
be given by $H_{ij}=\partial _{i}A_{j}+\partial _{j}A_{i}$. We need to show $%
\phi $ is a linear isomorphism. We only need to prove $\ker \phi =0$. Indeed
if $\left[ 
\begin{array}{c}
A_{1} \\ 
A_{2} \\ 
A_{3}%
\end{array}%
\right] \in \ker \phi $, then%
\begin{equation*}
\partial _{i}A_{j}+\partial _{j}A_{i}=0.
\end{equation*}%
This implies%
\begin{equation*}
x_{i}x_{j}\partial _{i}A_{j}+x_{i}x_{j}\partial _{j}A_{i}=0.
\end{equation*}%
Since $x_{i}\partial _{i}A_{j}=2A_{j}$, we see $x_{i}A_{i}=0$. Hence%
\begin{equation*}
0=\partial _{j}\left( x_{i}A_{i}\right) =A_{j}+x_{i}\partial
_{j}A_{i}=A_{j}-x_{i}\partial _{i}A_{j}=-A_{j}.
\end{equation*}%
The lemma follows.
\end{proof}

Now we will use Taylor expansion to prove Lemma \ref{lem3.3}.

\begin{proof}[Proof of Lemma \protect\ref{lem3.3}]
By standard cut-off argument we see the conclusion is in fact a local
statement. We choose a local coordinate near $N$, say $y_{1},y_{2},y_{3}$
such that $y_{i}\left( N\right) =0$. Assume $X=X^{i}\frac{\partial }{%
\partial y_{i}}$, let $\alpha $ be the associated $1$-form i.e. $\alpha
_{i}=g_{ij}X^{j}$, then we only need to find $\alpha $ with%
\begin{equation*}
\alpha _{ij}+\alpha _{ji}=h_{ij}+O\left( \left\vert y\right\vert ^{2}\right)
\end{equation*}%
as $y\rightarrow 0$. In another way the equation is%
\begin{equation*}
\partial _{i}\alpha _{j}+\partial _{j}\alpha _{i}-2\Gamma _{ij}^{k}\alpha
_{k}=h_{ij}+O\left( \left\vert y\right\vert ^{2}\right) .
\end{equation*}%
We will look for $\alpha _{i}=\alpha _{i}^{\left( 1\right) }+\alpha
_{i}^{\left( 2\right) }$, here $\alpha _{i}^{\left( l\right) }\in \mathcal{P}%
_{l}$. We have the Taylor expansion of $h_{ij}$ as $h_{ij}=h_{ij}\left(
0\right) +h_{ij}^{\left( 1\right) }+O\left( \left\vert y\right\vert
^{2}\right) $. So the equation becomes%
\begin{equation}
\partial _{i}\alpha _{j}^{\left( 1\right) }+\partial _{j}\alpha _{i}^{\left(
1\right) }=h_{ij}\left( 0\right)  \label{eq3.13}
\end{equation}%
and%
\begin{equation}
\partial _{i}\alpha _{j}^{\left( 2\right) }+\partial _{j}\alpha _{i}^{\left(
2\right) }-2\Gamma _{ij}^{k}\left( 0\right) \alpha _{k}^{\left( 1\right)
}=h_{ij}^{\left( 1\right) }.  \label{eq3.14}
\end{equation}%
For (\ref{eq3.13}), we can simply choose $\alpha _{i}^{\left( 1\right) }=%
\frac{1}{2}h_{ik}\left( 0\right) y_{k}$. Using Lemma \ref{lem3.4} we see (%
\ref{eq3.14}) also has a solution. Lemma \ref{lem3.3} follows.
\end{proof}

With Lemma \ref{lem3.3} at hand, we can proceed to prove Lemma \ref{lem3.2}.

\begin{proof}[Proof of Lemma \protect\ref{lem3.2}]
Note that%
\begin{equation*}
f=O^{\left( \infty \right) }\left( 1\right) ,\quad \theta _{ij}=O^{\left(
\infty \right) }\left( 1\right) ,
\end{equation*}%
as $\left\vert x\right\vert \rightarrow \infty $. By (\ref{eq3.10}),%
\begin{eqnarray*}
&&II\left( h,fg\right) \\
&=&-\frac{1}{128\pi ^{2}}\int_{\mathbb{R}^{3}}\left[ -\left( \theta
_{ikjk}+\theta _{jkik}-\left( tr\theta \right) _{ij}-\Delta \theta
_{ij}\right) \left( f_{ij}+\Delta f\cdot \delta _{ij}\right) \right. \\
&&\left. +3\left( \theta _{ijij}-\Delta tr\theta \right) \Delta f\right] dx.
\end{eqnarray*}%
By integration by parts, we have%
\begin{equation*}
\int_{\mathbb{R}^{3}}-\left( \theta _{ikjk}+\theta _{jkik}-\left( tr\theta
\right) _{ij}-\Delta \theta _{ij}\right) f_{ij}dx=\int_{\mathbb{R}%
^{3}}\left( -\theta _{ijij}+\Delta tr\theta \right) \Delta fdx
\end{equation*}%
and%
\begin{equation*}
\int_{\mathbb{R}^{3}}-\left( \theta _{ikjk}+\theta _{jkik}-\left( tr\theta
\right) _{ij}-\Delta \theta _{ij}\right) \Delta f\cdot \delta _{ij}dx=\int_{%
\mathbb{R}^{3}}\left( -2\theta _{ijij}+2\Delta tr\theta \right) \Delta fdx.
\end{equation*}%
Sum up we get $II\left( h,fg\right) =0$.

Next we will show $II\left( h,L_{X}g\right) =0$. First we note that it
follows from (\ref{eq3.9}) that for any smooth vector fields $X^{\prime },X$
and smooth functions $f^{\prime },f$, we have%
\begin{equation*}
II\left( L_{X^{\prime }}g+f^{\prime }g,L_{X}g+fg\right) =0.
\end{equation*}

To continue we can assume $h$ satisfies%
\begin{equation*}
h\left( N\right) =0,\quad Dh\left( N\right) =0.
\end{equation*}%
Indeed given any smooth $h$, by Lemma \ref{lem3.3}, we can find a smooth
vector field $X^{\prime }$ such that%
\begin{equation*}
\left( h-L_{X^{\prime }}g\right) \left( N\right) =0,\quad D\left(
h-L_{X^{\prime }}g\right) \left( N\right) =0.
\end{equation*}%
It follows that%
\begin{equation*}
II\left( h,L_{X}g\right) =II\left( h-L_{X^{\prime }}g,L_{X}g\right) .
\end{equation*}

Under the additional assumption on $h$, we have%
\begin{equation*}
\theta _{ij}=O^{\left( \infty \right) }\left( \left\vert x\right\vert
^{-2}\right) ,\quad X_{i}=O^{\left( \infty \right) }\left( \left\vert
x\right\vert ^{2}\right)
\end{equation*}%
as $x\rightarrow \infty $. Here $X=X_{i}\frac{\partial }{\partial x_{i}}$.
Let $\kappa =\tau ^{4}L_{X}g$, then%
\begin{equation*}
\kappa _{ij}=-4\tau ^{-1}X\tau \cdot \delta _{ij}+X_{ij}+X_{ji}.
\end{equation*}%
Hence%
\begin{equation*}
\kappa _{ikjk}+\kappa _{jkik}-\left( tr\kappa \right) _{ij}-\Delta \kappa
_{ij}=-4\left( \tau ^{-1}X\tau \right) _{ij}-4\Delta \left( \tau ^{-1}X\tau
\right) \cdot \delta _{ij}
\end{equation*}%
and%
\begin{equation*}
\kappa _{ijij}-\Delta tr\kappa =8\Delta \left( \tau ^{-1}X\tau \right) .
\end{equation*}%
It implies%
\begin{eqnarray*}
&&II\left( h,L_{X}g\right) \\
&=&\frac{1}{32\pi ^{2}}\int_{\mathbb{R}^{3}}\left[ -\left( \theta
_{ikjk}+\theta _{jkik}-\left( tr\theta \right) _{ij}-\Delta \theta
_{ij}\right) \left( \left( \tau ^{-1}X\tau \right) _{ij}+\Delta \left( \tau
^{-1}X\tau \right) \cdot \delta _{ij}\right) \right. \\
&&\left. +3\left( \theta _{ijij}-\Delta tr\theta \right) \Delta \left( \tau
^{-1}X\tau \right) \right] dx.
\end{eqnarray*}%
Note that $\tau ^{-1}X\tau =O^{\left( \infty \right) }\left( \left\vert
x\right\vert \right) $ and $\theta _{ij}=O^{\left( \infty \right) }\left(
\left\vert x\right\vert ^{-2}\right) $, the same integration by parts
argument as the beginning shows $II\left( h,L_{X}g\right) =0$.
\end{proof}

\begin{proposition}
\label{prop3.2}For any smooth symmetric $\left( 0,2\right) $ tensor $h$, $%
II\left( h\right) \leq 0$. Moreover, $II\left( h\right) =0$ if and only if $%
h=L_{X}g+f\cdot g$ for some smooth vector fields $X$ and smooth functions $f$
on $S^{3}$.
\end{proposition}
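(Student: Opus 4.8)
The plan is to start from the explicit formula in Theorem~\ref{thm3.1} (equivalently Theorem~\ref{thm1.1}). By the isometry group of $S^{3}$ it is enough to treat $p=N$, so write $II(h)=II(N,N,h)$, $\theta=\tau^{4}h$, and set
\[
T_{ij}=\theta_{ikjk}+\theta_{jkik}-(tr\theta)_{ij}-\Delta\theta_{ij},\qquad S=\theta_{ijij}-\Delta tr\theta,
\]
so that $II(h)=-\frac{1}{128\pi^{2}}\int_{\mathbb{R}^{3}}\bigl(|T|^{2}-\tfrac{3}{2}S^{2}\bigr)\,dx$. Two elementary differential identities are the crux: tracing gives $tr\,T=2S$, and a one-line computation gives $\partial_{i}T_{ij}=\partial_{j}S$, i.e. $\operatorname{div}T=\nabla S$.

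For the sign, note that since $\theta_{ij}=O^{(\infty)}(1)$ the second derivatives of $\theta$ decay like $|x|^{-2}$, so $T_{ij},S\in L^{2}(\mathbb{R}^{3})$ and Plancherel applies. Fix $\xi\neq0$, put $\hat e=\xi/|\xi|$. The two identities become $tr\,\widehat T(\xi)=2\widehat S(\xi)$ and $\widehat T(\xi)\hat e=\widehat S(\xi)\hat e$ for a.e. $\xi$. Decomposing the complex symmetric matrix $\widehat T(\xi)$ relative to $\hat e$, the second relation forces $\widehat T(\xi)=\widehat S(\xi)\,\hat e\otimes\hat e+C(\xi)$ with $C(\xi)$ symmetric and supported on $\hat e^{\perp}$, and the trace relation then gives $tr\,C(\xi)=\widehat S(\xi)$. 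As $C(\xi)$ acts on a $2$-dimensional space, Cauchy--Schwarz gives $|C(\xi)|^{2}\geq\tfrac12|tr\,C(\xi)|^{2}=\tfrac12|\widehat S(\xi)|^{2}$, whence $|\widehat T(\xi)|^{2}=|\widehat S(\xi)|^{2}+|C(\xi)|^{2}\geq\tfrac32|\widehat S(\xi)|^{2}$ pointwise. Integrating gives $\int(|T|^{2}-\tfrac32 S^{2})\geq0$, i.e. $II(h)\leq0$.

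The ``if'' half of the equality statement is exactly Lemma~\ref{lem3.2} (or (\ref{eq3.9})). For the ``only if'' half I would first invoke Lemma~\ref{lem3.3}: replacing $h$ by $h-L_{X'}g$ for a suitable $X'$ changes neither $II(h)$ (by Lemma~\ref{lem3.2} and (\ref{eq3.9})) nor the truth of the conclusion, so we may assume $h(N)=0$, $Dh(N)=0$; then $\theta=\tau^{4}h=O^{(\infty)}(|x|^{-2})\in L^{2}(\mathbb{R}^{3})$, so $\widehat\theta$ is an honest $L^{2}$ function. Now $II(h)=0$ forces equality in the pointwise bound for a.e. $\xi$, i.e. $C(\xi)=\tfrac12\widehat S(\xi)P_{\xi}$ with $P_{\xi}$ the projection onto $\hat e^{\perp}$; unwinding, this says exactly that the $\hat e^{\perp}\times\hat e^{\perp}$ block of $\widehat\theta(\xi)$ is a scalar multiple of $P_{\xi}$, which is equivalent to $\widehat\theta(\xi)=\xi\otimes a(\xi)+a(\xi)\otimes\xi+c(\xi)\,\delta$, i.e. to $\theta=L_{X}\delta+\phi\,\delta$ (flat symmetrized gradient plus a pure trace) for a vector field $X$ and a function $\phi$ on $\mathbb{R}^{3}$. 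Since the symbol of $(X,\phi)\mapsto L_{X}\delta+\phi\,\delta$ is injective, elliptic (overdetermined) regularity upgrades $X,\phi$ to smooth objects, and the decay of $\theta$ makes them vanish at $N$, hence smooth on $S^{3}$. Finally, using the identity $\tau^{4}L_{X}g=-4\tau^{-1}(X\tau)\delta+L_{X}\delta$ from the proof of Lemma~\ref{lem3.2}, we get $\theta=\tau^{4}(L_{X}g+fg)$ with $f=\phi+4\tau^{-1}X\tau$, i.e. $h=L_{X}g+fg$.

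I expect the genuine work to be in the equality case: making the Fourier reconstruction of $X$ (which involves dividing by $|\xi|$ near the origin) legitimate as a tempered distribution, and carrying out the elliptic regularity and decay bookkeeping that turns the distributional identity $\theta=L_{X}\delta+\phi\,\delta$ into honest smooth $X$ and $f$ on all of $S^{3}$, including across the pole. The reduction via Lemma~\ref{lem3.3} (so that $\theta$ actually lies in $L^{2}$) is what makes this tractable; by contrast the inequality $II(h)\leq0$ itself is soft once the two identities $tr\,T=2S$ and $\operatorname{div}T=\nabla S$ and the $2$-dimensional Cauchy--Schwarz step are in place.
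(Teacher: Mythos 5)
Your treatment of the inequality is correct and is, at bottom, the same as the paper's: the two identities $\operatorname{tr}T=2S$ and $\operatorname{div}T=\nabla S$ together with the rank-$2$ Cauchy--Schwarz on $\hat e^\perp$ repackage exactly the paper's pointwise computation (the rotated expression $\bigl(\tfrac12|b_{22}-b_{33}|^{2}+2|b_{23}|^{2}\bigr)|\xi|^{4}$ in (\ref{eq3.16}) is precisely $|T|^{2}-\tfrac32 S^{2}$ in Fourier), and your observation that $T,S\in L^{2}$ already for $\theta_{ij}=O^{(\infty)}(1)$ is a small simplification of the paper's presentation, since the paper performs the Lemma~\ref{lem3.3} reduction before doing Fourier analysis. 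Where you diverge is the equality case, and there you have left a real gap. The paper does \emph{not} reconstruct $X$ and $f$ from the Fourier representation $a_{ij}=\alpha\delta_{ij}+\beta_{i}\xi_{j}+\beta_{j}\xi_{i}$. Instead it invokes the (pre-existing) orthogonal decomposition $\mathcal{S}^{2}S^{3}=\mathcal{A}\oplus\mathcal{B}$ from Besse, where $\mathcal{B}$ consists of transverse-traceless tensors, and then only needs to pair $\theta$ against $\tau^{-6}\kappa$ for $\kappa=\tau^{4}k$, $k\in\mathcal{B}$; the two constraints $\widehat{\tau^{-6}\kappa_{ii}}=0$ and $\widehat{\tau^{-6}\kappa_{ij}}\xi_{j}=0$ kill every term, so $h\perp\mathcal{B}$ and hence $h\in\mathcal{A}$. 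This avoids exactly the issues you flag as "the genuine work": no division by $|\xi|$ near $\xi=0$, no elliptic regularity, no decay bookkeeping, no argument across the pole. Your sketch of reconstructing $X,\phi$ could in principle be made rigorous, but as written it is an outline of work not yet done; you should either carry it through (which is nontrivial precisely for the reasons you list), or simply replace it with the duality argument against $\mathcal{B}$, which is both shorter and entirely self-contained once the Besse decomposition is cited.
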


\begin{proof}
In view of Lemma \ref{lem3.2} and \ref{lem3.3} we can assume $h\left(
N\right) =0$ and $Dh\left( N\right) =0$. Under such assumption we have%
\begin{equation*}
\theta _{ij}=O^{\left( \infty \right) }\left( \left\vert x\right\vert
^{-2}\right)
\end{equation*}%
as $x\rightarrow \infty $. In particular $\theta _{ij}\in L^{2}\left( 
\mathbb{R}^{3}\right) $. Let%
\begin{equation*}
a_{ij}\left( \xi \right) =\widehat{\theta _{ij}}\left( \xi \right) ,
\end{equation*}%
then $a_{ij}\in L^{2}\left( \mathbb{R}^{3}\right) $. By Parseval relation we
have%
\begin{eqnarray*}
&&\int_{\mathbb{R}^{3}}\left( \sum_{ij}\left( \theta _{ikjk}+\theta
_{jkik}-\left( tr\theta \right) _{ij}-\Delta \theta _{ij}\right) ^{2}-\frac{3%
}{2}\left( \theta _{ijij}-\Delta tr\theta \right) ^{2}\right) dx \\
&=&\int_{\mathbb{R}^{3}}\left( -2a_{ij}\overline{a}_{ik}\xi _{j}\xi
_{k}\left\vert \xi \right\vert ^{2}+\frac{1}{2}a_{ij}\overline{a}_{kl}\xi
_{i}\xi _{j}\xi _{k}\xi _{l}+\frac{1}{2}a_{ij}\xi _{i}\xi _{j}\overline{a}%
_{kk}\left\vert \xi \right\vert ^{2}+\frac{1}{2}\overline{a}_{ij}\xi _{i}\xi
_{j}a_{kk}\left\vert \xi \right\vert ^{2}\right. \\
&&\left. +\sum_{ij}\left\vert a_{ij}\right\vert ^{2}\left\vert \xi
\right\vert ^{4}-\frac{1}{2}\left\vert \sum_{i}a_{ii}\right\vert
^{2}\left\vert \xi \right\vert ^{4}\right) d\xi .
\end{eqnarray*}%
We will show the integrand in nonnegative. Indeed, if we write%
\begin{equation*}
A=\left[ a_{ij}\right] _{1\leq i,j\leq 3},
\end{equation*}%
then $A$ is symmetric and the integrand is equal to%
\begin{eqnarray}
&&-2\left\vert A\xi \right\vert ^{2}\left\vert \xi \right\vert ^{2}+\frac{1}{%
2}\left\vert \xi ^{T}A\xi \right\vert ^{2}+\frac{1}{2}\xi ^{T}A\xi \cdot 
\overline{trA}\left\vert \xi \right\vert ^{2}+\frac{1}{2}\xi ^{T}\overline{A}%
\xi \cdot trA\left\vert \xi \right\vert ^{2}  \label{eq3.15} \\
&&-\frac{1}{2}\left\vert trA\right\vert ^{2}\left\vert \xi \right\vert
^{4}+\left\vert A\right\vert ^{2}\left\vert \xi \right\vert ^{4}.  \notag
\end{eqnarray}%
Assume $\xi \neq 0$, then we may find an orthogonal matrix $O$ such that%
\begin{equation*}
\xi =O\left[ 
\begin{array}{c}
\left\vert \xi \right\vert \\ 
0 \\ 
0%
\end{array}%
\right] .
\end{equation*}%
Denote%
\begin{equation*}
B=O^{T}AO,
\end{equation*}%
then $B$ is symmetric and the integrand (\ref{eq3.15}) is equal to%
\begin{equation}
\left( \frac{1}{2}\left\vert b_{22}-b_{33}\right\vert ^{2}+2\left\vert
b_{23}\right\vert ^{2}\right) \left\vert \xi \right\vert ^{4}  \label{eq3.16}
\end{equation}%
and hence nonnegative. This implies $II\left( h\right) \leq 0$.

If $II\left( h\right) =0$, then we have $b_{22}=b_{33}$ and $b_{23}=0$. this
implies%
\begin{equation}
a_{ij}=\alpha \delta _{ij}+\beta _{i}\xi _{j}+\beta _{j}\xi _{i},
\label{eq3.17}
\end{equation}%
here $\alpha $ and $\beta _{i}$ depend on $\xi $. To continue we recall the
orthogonal decomposition \cite[p130, lemma 4.57]{B},%
\begin{equation}
\mathcal{S}^{2}S^{3}=\mathcal{A}\oplus \mathcal{B}.  \label{eq3.18}
\end{equation}%
Here%
\begin{eqnarray*}
\mathcal{S}^{2}S^{3} &=&\left\{ C^{\infty }\text{ symmetric }\left(
0,2\right) \text{ tensors on }S^{3}\right\} , \\
\mathcal{A} &=&\left\{ L_{X}g+fg:X\text{ is a }C^{\infty }\text{ vector
field, }f\text{ is a }C^{\infty }\text{ function}\right\} , \\
\mathcal{B} &=&\left\{ k\in \mathcal{S}^{2}S^{3}:trk=0,k_{ijj}=0\right\} .
\end{eqnarray*}%
To show $h\in \mathcal{A}$, we only need to prove $h\perp \mathcal{B}$.
Indeed if $k\in \mathcal{B}$, let $\kappa =\tau ^{4}k=\kappa
_{ij}dx_{i}dx_{j}$, then%
\begin{equation}
\kappa _{ii}=0,\quad \left( \tau ^{-6}\kappa _{ij}\right) _{j}=0.
\label{eq3.19}
\end{equation}%
On the other hand we have%
\begin{equation}
\theta _{ij}=O^{\left( \infty \right) }\left( \left\vert x\right\vert
^{-2}\right) ,\quad \tau ^{-6}\kappa _{ij}=O^{\left( \infty \right) }\left(
\left\vert x\right\vert ^{-6}\right) .  \label{eq3.20}
\end{equation}%
By Fourier transform we have%
\begin{equation}
\widehat{\tau ^{-6}\kappa _{ii}}=0,\quad \widehat{\tau ^{-6}\kappa _{ij}}\xi
_{j}=0.  \label{eq3.21}
\end{equation}%
Hence%
\begin{eqnarray*}
\int_{S^{3}}\left\langle h,k\right\rangle d\mu &=&\int_{\mathbb{R}%
^{3}}\theta _{ij}\kappa _{ij}\tau ^{-6}dx \\
&=&\int_{\mathbb{R}^{3}}\widehat{\theta _{ij}}\overline{\widehat{\tau
^{-6}\kappa _{ij}}}d\xi \\
&=&\int_{\mathbb{R}^{3}}\left( \alpha \delta _{ij}+\beta _{i}\xi _{j}+\beta
_{j}\xi _{i}\right) \overline{\widehat{\tau ^{-6}\kappa _{ij}}}d\xi \\
&=&0,
\end{eqnarray*}%
here we have used (\ref{eq3.21}) in the last step. Hence $h=L_{X}g+fg$ for
some smooth vector field $X$ and smooth function $f$.
\end{proof}

\section{A new invariant for Paneitz operator\label{sec4}}

Let $\left( M,g\right) $ be a smooth compact three dimensional Riemannian
manifold. For any $p\in M$, we set%
\begin{equation}
\nu _{p}=\inf \left\{ \frac{E\left( u\right) }{\int_{M}u^{2}d\mu }:u\in
H^{2}\left( M\right) \backslash \left\{ 0\right\} ,u\left( p\right)
=0\right\} .  \label{eq4.1}
\end{equation}%
$\nu _{p}$ is always finite and achieved. Indeed we let $u_{i}\in
H^{2}\left( M\right) $ such that $u_{i}\left( p\right) =0$, $\left\Vert
u_{i}\right\Vert _{L^{2}}=1$ and $E\left( u_{i}\right) \rightarrow \nu _{p}$%
. In view of the fact%
\begin{equation*}
E\left( u_{i}\right) \geq c_{1}\left\Vert u_{i}\right\Vert _{H^{2}\left(
M\right) }^{2}-c_{2}\left\Vert u_{i}\right\Vert _{L^{2}}^{2}
\end{equation*}%
for some positive constants $c_{1}$ and $c_{2}$, we see $\left\Vert
u_{i}\right\Vert _{H^{2}\left( M\right) }^{2}\leq c$, independent of $i$.
After passing to a subsequence we can assume $u_{i}\rightharpoonup u$ weakly
in $H^{2}\left( M\right) $. It follows that $u_{i}\rightarrow u$ uniformly
and hence $u\left( p\right) =0$ and $\left\Vert u\right\Vert _{L^{2}}=1$. By
lower semicontinuity we have%
\begin{equation*}
E\left( u\right) \leq \lim \inf_{i\rightarrow \infty }E\left( u_{i}\right)
=\nu _{p}.
\end{equation*}%
Hence $E\left( u\right) =\nu _{p}$ and $u$ is a minimizer.

Note $u$ satisfies%
\begin{equation*}
E\left( u,\varphi \right) =\nu _{p}\int_{M}u\varphi d\mu
\end{equation*}%
for any $\varphi \in H^{2}\left( M\right) $ with $\varphi \left( p\right) =0$%
. For any $\psi \in H^{2}\left( M\right) $, let $\varphi =\psi -\psi \left(
p\right) $ we see%
\begin{equation*}
E\left( u,\psi \right) =\nu _{p}\int_{M}u\psi d\mu +\alpha \psi \left(
p\right) .
\end{equation*}%
Here $\alpha $ is a constant. In another word, we have%
\begin{equation}
Pu=\nu _{p}u+\alpha \delta _{p}  \label{eq4.2}
\end{equation}%
in distribution sense and%
\begin{equation}
u\in H^{2}\left( M\right) ,\left\Vert u\right\Vert _{L^{2}}=1,u\left(
p\right) =0.  \label{eq4.3}
\end{equation}%
Sometime to avoid confusion we write $u=u_{p}$ and $\alpha =\alpha _{p}$.

\begin{example}
\label{ex4.1}Using \cite[Lemma 7.1 and Corollary 7.1]{HY1}, we see on
standard $S^{3}$, $\nu _{N}=0$ and it is achieved on constant multiple of
the Green's function $G_{N}$. Calculation shows $\left\Vert G_{N}\right\Vert
_{L^{2}}=\frac{1}{4}$, hence $u_{N}=4G_{N}$, $\alpha _{N}=4$.
\end{example}

Let%
\begin{eqnarray}
\nu \left( M,g\right) &=&\inf_{p\in M}\nu \left( M,g,p\right)  \label{eq4.4}
\\
&=&\inf \left\{ \frac{E\left( u\right) }{\int_{M}u^{2}d\mu }:u\in
H^{2}\left( M\right) \backslash \left\{ 0\right\} ,u\text{ vanishes somewhere%
}\right\} .  \notag
\end{eqnarray}%
We will write $\nu \left( g\right) $ when no confusion could happen. Same
argument as before shows $\nu \left( g\right) $ is finite and achieved. It
is clear that condition P is satisfied if and only if $\nu \left( g\right)
>0 $, condition NN is satisfied if and only if $\nu \left( g\right) \geq 0$.
By Example \ref{ex4.1} and symmetry, we see $\nu \left(
S^{3},g_{S^{3}}\right) =0$.

Here we make some general discussion about $\nu _{p}$ and $\nu \left(
g\right) $. For convenience we write $\nu =\nu \left( g\right) $. Let%
\begin{equation*}
\lambda _{1}\leq \lambda _{2}\leq \lambda _{3}\leq \cdots
\end{equation*}%
be eigenvalues and $\varphi _{i}$ be the associated orthonormal
eigenfunctions of Paneitz operator $P$, then%
\begin{equation}
\lambda _{1}\leq \nu _{p}\leq \lambda _{2},\quad \lambda _{1}\leq \nu \leq
\lambda _{2}.  \label{eq4.5}
\end{equation}

Indeed, for given $p\in M$, there exists $c_{1},c_{2}$ not all zeroes such
that $c_{1}\varphi _{1}\left( p\right) +c_{2}\varphi _{2}\left( p\right) =0$%
, then%
\begin{equation*}
\nu _{p}\leq \frac{E\left( c_{1}\varphi _{1}+c_{2}\varphi _{2}\right) }{%
\left\Vert c_{1}\varphi _{1}+c_{2}\varphi _{2}\right\Vert _{L^{2}}^{2}}=%
\frac{\lambda _{1}c_{1}^{2}+\lambda _{2}c_{2}^{2}}{c_{1}^{2}+c_{2}^{2}}\leq
\lambda _{2}.
\end{equation*}%
The inequality of $\nu $ follows.

Assume $u\in H^{2}\left( M\right) $, $\left\Vert u\right\Vert _{L^{2}}=1$
and $u\left( p\right) =0$ for some $p$ with $E\left( u\right) =\nu $ i.e. $u$
is a minimizer for the $\nu $ problem, then%
\begin{equation}
Pu=\nu u+\alpha \delta _{p}.  \label{eq4.6}
\end{equation}%
If $\sharp u^{-1}\left( 0\right) >1$ i.e. $u$ vanishes at two or more
points, then%
\begin{equation}
Pu=\nu u,\quad \lambda _{1}=\nu .  \label{eq4.7}
\end{equation}%
Indeed assume $u\left( p_{1}\right) =0$ and $u\left( p_{2}\right) =0$ for $%
p_{1}\neq p_{2}$, then for $\varphi \in H^{2}$ with either $\varphi \left(
p_{1}\right) =0$ or $\varphi \left( p_{2}\right) =0$,%
\begin{equation}
E\left( u,\varphi \right) =\nu \int_{M}u\varphi d\mu .  \label{eq4.8}
\end{equation}%
Hence (\ref{eq4.8}) is valid for any $\varphi \in H^{2}$. In another word $%
Pu=\nu u$. If $\lambda _{1}<\nu $, then $\lambda _{1}<\lambda _{2}$ and $%
\varphi _{1}$ does not vanish anywhere. Using%
\begin{equation*}
\int_{M}\varphi _{1}\varphi _{2}d\mu =0,
\end{equation*}%
we see $\varphi _{2}$ must change sign. Hence for $\varepsilon >0$ small%
\begin{equation*}
\lambda _{1}<\nu \leq \frac{E\left( \varepsilon \varphi _{1}+\varphi
_{2}\right) }{\left\Vert \varepsilon \varphi _{1}+\varphi _{2}\right\Vert
_{L^{2}}^{2}}=\frac{\varepsilon ^{2}\lambda _{1}+\lambda _{2}}{\varepsilon
^{2}+1}<\lambda _{2}
\end{equation*}%
A contradiction with the fact $\nu $ is an eigenvalue. Hence $\nu $ must be
the first eigenvalue.

Now we can state the following interesting relation between condition NN and
the sign of $\lambda _{2}$.

\begin{proposition}
\label{prop4.1}Assume the Yamabe invariant $Y\left( g\right) >0$ and there
exists a $\widetilde{g}\in \left[ g\right] $ such that $\widetilde{Q}\geq 0$
and not identically zero, then the following statements are equivalent

\begin{enumerate}
\item $Y_{4}\left( g\right) >-\infty $.

\item $\lambda _{2}\left( P\right) >0$.

\item $\nu \left( g\right) \geq 0$ i.e. $P$ satisfies condition NN.
\end{enumerate}
\end{proposition}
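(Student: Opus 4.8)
The plan is to prove the equivalences through $(3)\Rightarrow (2)$, $(2)\Rightarrow (1)$, and $(1)\Leftrightarrow (3)$, using throughout the conformal covariance of the form $E$. For $\widetilde{g}=\rho ^{-4}g$, combining $(\ref{eq1.3})$ with $d\mu _{\widetilde{g}}=\rho ^{-6}d\mu $ gives $E_{\widetilde{g}}\left( u,v\right) =E_{g}\left( \rho u,\rho v\right) $, so multiplication by $\rho $ is a linear isomorphism of $H^{2}\left( M\right) $ intertwining $E_{\widetilde{g}}$ and $E_{g}$. Hence the Morse index and the nullity of $E$, i.e. the numbers of negative and of zero eigenvalues of $P$, are conformal invariants; in particular the conditions $\lambda _{2}\left( P\right) >0$ and $\ker P=0$ depend only on $\left[ g\right] $. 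Since some $\widetilde{g}\in \left[ g\right] $ has $\widetilde{Q}\geq 0$, $\widetilde{Q}\not\equiv 0$, and $P_{\widetilde{g}}1=-\frac{1}{2}\widetilde{Q}$, we have $E_{\widetilde{g}}\left( 1\right) =-\frac{1}{2}\int_{M}\widetilde{Q}\,d\mu _{\widetilde{g}}<0$, so $E$ has a negative direction, i.e. $\lambda _{1}\left( P\right) <0$. With $(\ref{eq4.5})$ this already shows $(2)$ is equivalent to ``$E$ has index exactly one and $\ker P=0$''.

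For $(3)\Rightarrow (2)$: assume $\nu \left( g\right) \geq 0$. By $(\ref{eq4.5})$, $\lambda _{2}\left( P\right) \geq \nu \left( g\right) \geq 0$, so it suffices to show $\ker P=0$ (then $\lambda _{2}\left( P\right) \neq 0$, hence $>0$). If the first eigenspace of $P$ contains a function $\varphi $ vanishing at some point $q$ — which happens in particular if $\lambda _{1}$ is not simple — then $\varphi $ is admissible in $(\ref{eq4.1})$ for $q$, so $\nu _{q}\leq E\left( \varphi \right) /\int_{M}\varphi ^{2}d\mu =\lambda _{1}<0$, contradicting $(3)$. Otherwise $\lambda _{1}$ is simple with a nowhere vanishing eigenfunction $\varphi _{1}>0$. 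Suppose $\psi \neq 0$ lies in $\ker P$; since $\lambda _{1}\neq 0$ we have $\int_{M}\varphi _{1}\psi \,d\mu =0$, so $\psi $ changes sign and we may choose $q_{0}$ with $\psi \left( q_{0}\right) <0$. With $t=\varphi _{1}\left( q_{0}\right) /\left\vert \psi \left( q_{0}\right) \right\vert >0$ and $w=\varphi _{1}+t\psi $ we get $w\neq 0$, $w\left( q_{0}\right) =0$, and, since $E\left( \varphi _{1},\psi \right) =\lambda _{1}\int_{M}\varphi _{1}\psi \,d\mu =0$ and $E\left( \psi \right) =0$,
\begin{equation*}
E\left( w\right) =\lambda _{1}\int_{M}\varphi _{1}^{2}\,d\mu +2tE\left( \varphi _{1},\psi \right) +t^{2}E\left( \psi \right) =\lambda _{1}\int_{M}\varphi _{1}^{2}\,d\mu <0,
\end{equation*}
so $\nu _{q_{0}}<0$, again contradicting $(3)$. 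Hence $\ker P=0$ and $\lambda _{2}\left( P\right) >0$. (Only the hypothesis on $\widetilde{Q}$ is used here.)

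The equivalence $(1)\Leftrightarrow (3)$ I would take from \cite{HY1}: condition NN, i.e. $\nu \left( g\right) \geq 0$, is tied there to the finiteness of $Y_{4}\left( g\right) $ and, when $\ker P=0$, to the Green's function satisfying $G_{p}\left( p\right) \leq 0$ for all $p$. For $(1)\Rightarrow (3)$ one uses $E\left( u\right) \geq Y_{4}\left( g\right) /\left\Vert u^{-1}\right\Vert _{L^{6}}^{2}$ for positive $u\in H^{2}\left( M\right) $ and lets a family of positive functions degenerate with infimum tending to $0$ at a prescribed point, obtaining $E\geq 0$ on the nonnegative limit. For $(2)\Rightarrow (1)$: $\lambda _{2}\left( P\right) >0$ gives $\ker P=0$ by the first paragraph, so $P$ has a Green's function, and it remains to establish $G_{p}\left( p\right) \leq 0$ for every $p$, after which $(1)$ follows from \cite{HY1}.

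The main obstacle is exactly this last point — deducing $G_{p}\left( p\right) \leq 0$ (equivalently $\nu \left( g\right) \geq 0$, or $Y_{4}\left( g\right) >-\infty $) from $\lambda _{2}\left( P\right) >0$ together with $Y\left( g\right) >0$ and the existence of a conformal metric with $\widetilde{Q}\geq 0\not\equiv 0$ — where positivity of the scalar curvature must be genuinely combined with the sign of $Q$, in the spirit of \cite{GM,HR} and \cite{HY2}. I would attack it by passing to the Yamabe representative, so that $R$ is a positive constant, and analyzing the minimizer $u_{p}$ of $(\ref{eq4.1})$, which solves $Pu_{p}=\nu _{p}u_{p}+\alpha _{p}\delta _{p}$ with $u_{p}\left( p\right) =0$ and hence has $E\left( u_{p}\right) =\nu _{p}\int_{M}u_{p}^{2}d\mu $; integrating $(\ref{eq1.5})$ by parts one hopes to write $E\left( u_{p}\right) $ as a favourable interior integrand built from the Hessian of $u_{p}$, $R\left\vert \nabla u_{p}\right\vert ^{2}$ and $Qu_{p}^{2}$ plus a distributional contribution at $p$, forcing $\nu _{p}\geq 0$ unless $\left( M,g\right) $ is conformal to the round $S^{3}$ — in which case $\lambda _{2}\left( P\right) >0$ holds directly from the spectrum of $(\ref{eq1.8})$, so $(1)$, $(2)$, $(3)$ all hold.
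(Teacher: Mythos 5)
Your $(3)\Rightarrow(2)$ argument is correct and is a genuinely different and more self-contained route than the paper's: the paper first establishes $\ker P=0$ and the negativity of the off-diagonal Green's function from an external result (\cite[Proposition 1.2]{HY3}) before touching any of the three implications, whereas you deduce $\ker P=0$ directly from $\nu (g)\geq 0$ via the clean variational trick with $w=\varphi _{1}+t\psi $. This step is fine.

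The proposal as a whole, however, is incomplete, as you acknowledge: the cycle cannot be closed because $(2)\Rightarrow (1)$ (equivalently $(2)\Rightarrow (3)$) is missing, and your sketched attack — passing to the Yamabe representative and integrating by parts in $E$ — does not by itself produce a sign. The paper closes the loop via $(2)\Rightarrow (3)$ using a precise identity from \cite[Proposition 2.1]{HY3}: for $u\geq 0$ in $H^{2}$ with $u(p)=0$,
\begin{equation*}
\int_{M}Pu\cdot G_{L,p}^{-1}\,d\mu =\int_{M}u\,G_{L,p}^{-1}\,\bigl\vert Rc_{G_{L,p}^{4}g}\bigr\vert _{g}^{2}\,d\mu ,
\end{equation*}
where $G_{L}$ is the Green's function of the conformal Laplacian (positive since $Y(g)>0$). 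With $\lambda _{2}>0$ and $\ker P=0$, the Krein--Rutman argument forces the minimizer of the $\nu $-problem to vanish at exactly one point, hence be single-signed; substituting $Pu=\nu u+\alpha \delta _{p}$ and $u(p)=0$ into the identity gives $\nu \int_{M}u\,G_{L,p}^{-1}d\mu =\int_{M}u\,G_{L,p}^{-1}\vert Rc\vert ^{2}d\mu \geq 0$, hence $\nu \geq 0$. This is exactly where ``$Y(g)>0$'' combines with the fourth-order structure, and it is not something generic integration by parts will find. Two further points: your $(1)\Rightarrow (3)$ sketch only yields $E(u)\geq 0$ for \emph{nonnegative} $u$ vanishing somewhere, while $\nu (g)\geq 0$ requires it for all such $u$; upgrading from the nonnegative class again requires knowing the minimizer is single-signed, which in turn relies on $(2)$. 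And the claim that $(1)\Leftrightarrow (3)$ can simply be read off from \cite{HY1} is too optimistic — the direction $(3)\Rightarrow (1)$ also needs \cite[Proposition 1.2]{HY3} for the borderline case $\nu =0$, and $(1)\Rightarrow (3)$ is not proved there.
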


\begin{proof}
It follows from the assumption that $\lambda _{1}<0$. By \cite[Proposition
1.2]{HY3} and (\ref{eq1.4}) we have $\ker P=0$ and $G_{P}\left( p,q\right)
<0 $ for $p\neq q$. Here $G_{P}$ is the Green's function of the Paneitz
operator. Let $m\geq 1$ be the natural number such that $\lambda _{m}<0$ and 
$\lambda _{m+1}>0$ i.e. $\lambda _{m}$ is the largest negative eigenvalue.
By applying the classical Krein-Rutman theorem to the operator%
\begin{equation*}
Tf\left( p\right) =-\int_{M}G_{P}\left( p,q\right) f\left( q\right) d\mu
\left( q\right)
\end{equation*}%
we know $\lambda _{m}$ must be simple and $\varphi _{m}$ can not touch zero
(see \cite[section 4]{HY3}). Without losing of generality, we assume $%
\varphi _{m}>0$.

\begin{itemize}
\item[(1)$\Rightarrow $(2):] If $\lambda _{2}<0$, then $m\geq 2$ and the
first eigenfunction $\varphi _{1}$ must change sign. Let%
\begin{equation*}
\kappa =-\min_{p\in M}\frac{\varphi _{1}\left( p\right) }{\varphi _{m}\left(
p\right) }>0,
\end{equation*}%
then $\varphi _{1}+\kappa \varphi _{m}\geq 0$ and it touches zero somewhere.
On the other hand $E\left( \varphi _{1}+\kappa \varphi _{m}\right) =\lambda
_{1}+\kappa ^{2}\lambda _{m}<0$, hence%
\begin{equation*}
Y_{4}\left( g\right) \leq \left\Vert \left( \varphi _{1}+\kappa \varphi
_{m}+\varepsilon \right) ^{-1}\right\Vert _{L^{6}}^{2}E\left( \varphi
_{1}+\kappa \varphi _{m}+\varepsilon \right) \rightarrow -\infty
\end{equation*}%
as $\varepsilon \downarrow 0$, a contradiction.

\item[(2)$\Rightarrow $(3):] Since $\lambda _{2}>0$, we get $m=1$. Let $u\in
H^{2}\left( M\right) $ such that $u$ touches zero somewhere, $\left\Vert
u\right\Vert _{L^{2}}=1$ and $E\left( u\right) =\nu $. We claim $\sharp
u^{-1}\left( 0\right) =1$. Indeed if $\sharp u^{-1}\left( 0\right) >1$, then
by the discussion before Proposition \ref{prop4.1} we know $P\left( u\right)
=\nu u$ and $\nu =\lambda _{1}$. Its eigenfunction $u$ can not touch zero, a
contradiction. The claim follows i.e. $u$ touches $0$ exactly once. Assume $%
u\left( p\right) =0$ and $u>0$ on $M\backslash \left\{ p\right\} $, then%
\begin{equation*}
P\left( u\right) =\nu u+\alpha \delta _{p}.
\end{equation*}%
Hence%
\begin{equation*}
\int_{M}P\left( u\right) G_{L,p}^{-1}d\mu =\nu \int_{M}uG_{L,p}^{-1}d\mu .
\end{equation*}%
Here $G_{L}$ is the Green's function of the conformal Laplacian operator $%
L=-8\Delta +R$. On the other hand it follows from \cite[Proposition 2.1]{HY3}
that%
\begin{equation*}
\int_{M}P\left( u\right) G_{L,p}^{-1}d\mu =\int_{M}uG_{L,p}^{-1}\left\vert
Rc_{G_{L,p}^{4}g}\right\vert _{g}^{2}d\mu .
\end{equation*}%
Combine the two equalities above we get%
\begin{equation*}
\nu \int_{M}uG_{L,p}^{-1}d\mu =\int_{M}uG_{L,p}^{-1}\left\vert
Rc_{G_{L,p}^{4}g}\right\vert _{g}^{2}d\mu .
\end{equation*}%
Hence $\nu \geq 0$.

\item[(3)$\Rightarrow $(1):] If $E\left( u\right) =0$, $u$ is not
identically zero but $u\left( p\right) =0$, then $u=cG_{p}$ (see \cite[%
section 5]{HY1}). Hence $G_{p}\left( p\right) =0$. It follows from \cite[%
Proposition 1.2]{HY3} that $\left( M,g\right) $ is conformal diffeomorphic
to standard $S^{3}$. In this case we know $Y_{4}\left( g\right) >-\infty $
(see \cite{HY1,YZ}). On the other hand if $E\left( u\right) >0$ for any $%
u\in H^{2}\backslash \left\{ 0\right\} $ and $u$ touches zero somewhere,
then the Paneitz operator satisfies condition P and $Y_{4}\left( g\right)
>-\infty $ (see \cite{HY1}).
\end{itemize}
\end{proof}

Indeed the above proof gives us the following

\begin{corollary}
\label{cor4.1}Assume the Yamabe invariant $Y\left( g\right) >0$, $\left(
M,g\right) $ is not conformal diffeomorphic to the standard $S^{3}$ and
there exists a $\widetilde{g}\in \left[ g\right] $ such that $\widetilde{Q}%
\geq 0$ and not identically zero, then the following statements are
equivalent

\begin{enumerate}
\item $Y_{4}\left( g\right) >-\infty $.

\item $\lambda _{2}\left( P\right) >0$.

\item $\nu \left( g\right) >0$ i.e. $P$ satisfies condition P.
\end{enumerate}
\end{corollary}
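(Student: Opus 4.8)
The plan is to obtain this from Proposition \ref{prop4.1} by eliminating the borderline case $\nu(g)=0$. Under the present hypotheses Proposition \ref{prop4.1} already yields the equivalences $(1)\Leftrightarrow(2)\Leftrightarrow[\nu(g)\geq 0]$. Since statement (3) of the corollary, $\nu(g)>0$, trivially implies $\nu(g)\geq 0$, it is enough to prove that, with the extra assumption that $(M,g)$ is not conformally diffeomorphic to the standard $S^{3}$, the inequality $\nu(g)\geq 0$ in fact forces $\nu(g)>0$. Granting this, all three statements are equivalent.

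To rule out $\nu(g)=0$, I would suppose it holds and seek a contradiction. As noted right after (\ref{eq4.4}), $\nu(g)$ is finite and attained, say by some $u\in H^{2}(M)\setminus\{0\}$ with $\|u\|_{L^{2}}=1$ and $u(p)=0$ for some $p\in M$; then $E(u)=\nu(g)\|u\|_{L^{2}}^{2}=0$. This is precisely the configuration treated in the implication $(3)\Rightarrow(1)$ of the proof of Proposition \ref{prop4.1}: from $E(u)=0$, $u\not\equiv 0$ and $u(p)=0$ one gets $u=c\,G_{p}$ for some $c\neq 0$ (see \cite[section 5]{HY1}; equivalently, the Euler--Lagrange equation (\ref{eq4.6}) reads $Pu=\alpha\delta_{p}$ with $\alpha\neq 0$, since $\ker P=0$ under the hypotheses), hence $G_{p}(p)=u(p)/c=0$. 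By \cite[Proposition 1.2]{HY3} the vanishing of the Green's function at its pole forces $(M,g)$ to be conformally diffeomorphic to the standard $S^{3}$, contrary to assumption. Therefore $\nu(g)>0$.

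Putting things together: by Proposition \ref{prop4.1} we have $(1)\Leftrightarrow(2)\Leftrightarrow[\nu(g)\geq 0]$, and under the additional hypothesis $[\nu(g)\geq 0]\Leftrightarrow[\nu(g)>0]$, so (1), (2) and (3) are all equivalent. The argument is mostly bookkeeping on top of Proposition \ref{prop4.1}; the one point needing care is that the vanishing of $\nu(g)$ genuinely produces --- via the attainment of $\nu(g)$ --- a nonzero $u$ with $E(u)=0$ vanishing at a point, so that it is forced to be a multiple of the Green's function with vanishing pole value and the rigidity of \cite[Proposition 1.2]{HY3} applies. I do not expect any further obstacle.
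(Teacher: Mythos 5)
Your argument is correct and coincides with the paper's: the paper explicitly remarks that Corollary~\ref{cor4.1} follows from the proof of Proposition~\ref{prop4.1}, and the step you isolate (the implication $(3)\Rightarrow(1)$ in that proof, where $E(u)=0$ with $u\not\equiv 0$, $u(p)=0$ forces $u=cG_p$, hence $G_p(p)=0$ and, by~\cite[Proposition 1.2]{HY3}, conformal equivalence to $S^3$) is precisely the mechanism that excludes the borderline case $\nu(g)=0$ under the added non-sphere hypothesis.
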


We remark that Proposition \ref{prop4.1} provides another argument for the
third conclusion of \cite[Theorem 1.1]{HY2}. This approach does not need the
connecting path to Berger's sphere and \cite[Theorem 1.3]{HY1}.

Let $\widetilde{g}=g+th$, for quantities in (\ref{eq4.2}) and (\ref{eq4.3})
we write%
\begin{eqnarray}
\nu \left( g+th,p\right) &=&\nu \left( p\right) +\nu ^{\left( 1\right)
}\left( p,h\right) t+\nu ^{\left( 2\right) }\left( p,h\right) t^{2}+O\left(
t^{3}\right) ,  \label{eq4.9} \\
\alpha \left( g+th,p\right) &=&\alpha \left( p\right) +\alpha ^{\left(
1\right) }\left( p,h\right) t+\alpha ^{\left( 2\right) }\left( p,h\right)
t^{2}+O\left( t^{3}\right) ,  \label{eq4.10} \\
u_{p}\left( g+th,q\right) &=&u_{p}\left( q\right) +u_{p}^{\left( 1\right)
}\left( q,h\right) t+u_{p}^{\left( 2\right) }\left( q,h\right) t^{2}+O\left(
t^{3}\right) .  \label{eq4.11}
\end{eqnarray}%
Hence $u_{p}^{\left( i\right) }\left( p,h\right) =0$ for $i=1,2$. Note
because%
\begin{equation*}
\int_{M}\widetilde{u}_{p}^{2}d\widetilde{\mu }=1,
\end{equation*}%
we have%
\begin{equation}
\int_{M}\left( 2u_{p}\cdot u_{p}^{\left( 1\right) }+\frac{1}{2}%
u_{p}^{2}trh\right) d\mu =0,  \label{eq4.12}
\end{equation}%
and%
\begin{equation}
\int_{M}\left[ 2u_{p}\cdot u_{p}^{\left( 2\right) }+\left( u_{p}^{\left(
1\right) }\right) ^{2}+u_{p}\cdot u_{p}^{\left( 1\right) }trh+\frac{1}{8}%
u_{p}^{2}\left( trh\right) ^{2}-\frac{1}{4}u_{p}^{2}\left\vert h\right\vert
^{2}\right] d\mu =0.  \label{eq4.13}
\end{equation}%
For any smooth function $\varphi $, it follows from (\ref{eq4.2}) that%
\begin{equation*}
\int_{M}\widetilde{u}_{p}\left( \widetilde{P}-\widetilde{\nu }_{p}\right)
\varphi d\widetilde{\mu }=\widetilde{\alpha }_{p}\varphi \left( p\right) .
\end{equation*}%
Hence%
\begin{eqnarray}
&&\alpha _{p}^{\left( 1\right) }\varphi \left( p\right)  \label{eq4.14} \\
&=&\int_{M}\left( u_{p}\left( P_{g,h}^{\left( 1\right) }-\nu _{p}^{\left(
1\right) }\right) \varphi +u_{p}^{\left( 1\right) }\left( P-\nu _{p}\right)
\varphi +\frac{1}{2}u_{p}\left( P-\nu _{p}\right) \varphi \cdot trh\right)
d\mu  \notag
\end{eqnarray}%
and%
\begin{eqnarray}
&&\int_{M}\left[ u_{p}^{\left( 2\right) }\left( P-\nu _{p}\right) \varphi
+u_{p}^{\left( 1\right) }\left( P_{g,h}^{\left( 1\right) }-\nu _{p}^{\left(
1\right) }\right) \varphi +u_{p}\left( P_{g,h}^{\left( 2\right) }-\nu
_{p}^{\left( 2\right) }\right) \varphi \right.  \label{eq4.15} \\
&&+\frac{1}{2}u_{p}^{\left( 1\right) }\left( P-\nu _{p}\right) \varphi \cdot
trh+\frac{1}{2}u_{p}\left( P_{g,h}^{\left( 1\right) }-\nu _{p}^{\left(
1\right) }\right) \varphi \cdot trh  \notag \\
&&\left. +\frac{1}{8}u_{p}\left( P-\nu _{p}\right) \varphi \cdot \left(
trh\right) ^{2}-\frac{1}{4}u_{p}\left( P-\nu _{p}\right) \varphi \cdot
\left\vert h\right\vert ^{2}\right] d\mu  \notag \\
&=&\alpha _{p}^{\left( 2\right) }\varphi \left( p\right) .  \notag
\end{eqnarray}%
By approximation it is also true for $\varphi \in H^{2}\left( M\right) $
too. Hence taking $\varphi =u_{p}$, we get%
\begin{equation}
\nu _{p}^{\left( 1\right) }=\int_{M}u_{p}P_{g,h}^{\left( 1\right) }u_{p}d\mu
.  \label{eq4.16}
\end{equation}%
Similar arguments show%
\begin{eqnarray}
&&\nu _{p}^{\left( 2\right) }  \label{eq4.17} \\
&=&\int_{M}u_{p}P_{g,h}^{\left( 2\right) }u_{p}d\mu +\int_{M}u_{p}^{\left(
1\right) }P_{g,h}^{\left( 1\right) }u_{p}d\mu +\frac{1}{2}%
\int_{M}u_{p}P_{g,h}^{\left( 1\right) }u_{p}\cdot trhd\mu -\frac{1}{4}\nu
_{p}^{\left( 1\right) }\int_{M}u_{p}^{2}trhd\mu .  \notag
\end{eqnarray}

\begin{proposition}
\label{prop4.2}Let $\left( S^{3},g\right) $ be the standard sphere, then for
any $p\in S^{3}$ and smooth symmetric $\left( 0,2\right) $ tensor $h$, we
have%
\begin{equation*}
\nu ^{\left( 1\right) }\left( p,h\right) =0,
\end{equation*}%
and%
\begin{equation*}
\nu ^{\left( 2\right) }\left( p,h\right) =-16II\left( p,p,h\right) .
\end{equation*}
\end{proposition}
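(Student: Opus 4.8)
The plan is to reduce to $p=N$ by the symmetry of the round sphere and then work with the integral identities (\ref{eq4.16}), (\ref{eq4.17}) and (\ref{eq4.14}), together with the facts about the standard $S^{3}$ already recorded: $\nu_{N}=0$, $u_{N}=4G_{N}$, $\alpha_{N}=4$ (Example \ref{ex4.1}), and $G_{N}(N)=0$. For the first variation, substituting $u_{N}=4G_{N}$ into (\ref{eq4.16}) gives $\nu^{(1)}(N,h)=16\int_{S^{3}}G_{N}P_{g,h}^{(1)}G_{N}\,d\mu$. Applying Lemma \ref{lem2.2} with $\varphi=\psi=G_{N}$ the correction term in (\ref{eq2.6}) drops, so this equals $16\int_{S^{3}}P_{g,h}^{(1)}G_{N}\cdot G_{N}\,d\mu$; by Lemma \ref{lem2.3} and $G(N,N)=0$ that integral equals $-I(N,N,h)$, which vanishes by Proposition \ref{prop3.1}. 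Hence $\nu^{(1)}(N,h)=0$, and $\nu^{(1)}(p,h)=0$ for all $p$ follows by symmetry.

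The key step for the second variation is to identify $u_{N}^{(1)}$. With $\nu_{N}=\nu_{N}^{(1)}=0$, identity (\ref{eq4.14}) reads $\alpha_{N}^{(1)}\varphi(N)=\int_{S^{3}}\big(u_{N}P_{g,h}^{(1)}\varphi+u_{N}^{(1)}P\varphi+\frac{1}{2}u_{N}P\varphi\cdot trh\big)\,d\mu$. The order-$t$ identity obtained in the proof of Lemma \ref{lem2.3}, multiplied by $4$ and using $u_{N}=4G_{N}$, reads $\int_{S^{3}}\big(4I_{N}P\varphi+u_{N}P_{g,h}^{(1)}\varphi+\frac{1}{2}u_{N}P\varphi\cdot trh\big)\,d\mu=0$. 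Subtracting, $\int_{S^{3}}(u_{N}^{(1)}-4I_{N})P\varphi\,d\mu=\alpha_{N}^{(1)}\varphi(N)$ for every $\varphi$, i.e. $P\big(u_{N}^{(1)}-4I_{N}-\alpha_{N}^{(1)}G_{N}\big)=0$ in the distribution sense. Since the Paneitz operator on the standard $S^{3}$ has trivial kernel (by (\ref{eq1.8}) and the spectrum of $\Delta$), this forces
\begin{equation*}
u_{N}^{(1)}=4I_{N}+\alpha_{N}^{(1)}G_{N}.
\end{equation*}

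Now, since $\nu_{N}^{(1)}=0$, (\ref{eq4.17}) reduces to $\nu^{(2)}(N,h)=\int u_{N}P_{g,h}^{(2)}u_{N}\,d\mu+\int u_{N}^{(1)}P_{g,h}^{(1)}u_{N}\,d\mu+\frac{1}{2}\int u_{N}P_{g,h}^{(1)}u_{N}\cdot trh\,d\mu$. To match the first term to the shape appearing in Lemma \ref{lem2.3}, take the order-$t^{2}$ term of $\int P_{g+th}\varphi\cdot\psi\,d\mu_{g+th}=\int\varphi P_{g+th}\psi\,d\mu_{g+th}$ with $\varphi=\psi=G_{N}$; the $|h|^{2}$- and $(trh)^{2}$-weighted pieces cancel by self-adjointness of $P$, giving $\int G_{N}P_{g,h}^{(2)}G_{N}\,d\mu=\int P_{g,h}^{(2)}G_{N}\cdot G_{N}\,d\mu+\frac{1}{2}\int(P_{g,h}^{(1)}G_{N}\cdot G_{N}-G_{N}P_{g,h}^{(1)}G_{N})\,trh\,d\mu$. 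Substituting $u_{N}=4G_{N}$ and the formula for $u_{N}^{(1)}$, and using $\int_{S^{3}}G_{N}P_{g,h}^{(1)}G_{N}\,d\mu=\int_{S^{3}}P_{g,h}^{(1)}G_{N}\cdot G_{N}\,d\mu=0$ from the first paragraph to kill the $\alpha_{N}^{(1)}$ contribution, the $trh$-weighted corrections cancel and one is left with $\nu^{(2)}(N,h)=-16\int_{S^{3}}\big(P_{g,h}^{(2)}G_{N}\cdot G_{N}+P_{g,h}^{(1)}G_{N}\cdot I_{N}+\frac{1}{2}P_{g,h}^{(1)}G_{N}\cdot G_{N}\,trh\big)\,d\mu$, which by Lemma \ref{lem2.3} (again using $G(N,N)=I(N,N,h)=0$) is exactly $-16\,II(N,N,h)$; the general $p$ follows by symmetry. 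I expect the identification $u_{N}^{(1)}=4I_{N}+\alpha_{N}^{(1)}G_{N}$ to be the main obstacle — it needs the distributional first-order identities matched precisely and the fact $\ker P_{g}=0$ on $S^{3}$ — while the remaining work is the bookkeeping that makes the $trh$-weighted terms cancel.
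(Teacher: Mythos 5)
Your argument follows the paper's proof quite closely: reduce to $p=N$, derive $\nu^{(1)}(N,h)=16\int_{S^{3}}G_{N}P_{g,h}^{(1)}G_{N}\,d\mu=0$ from (\ref{eq4.16}) and the computation in Proposition~\ref{prop3.1}, and then expand $\nu^{(2)}(N,h)$ via (\ref{eq4.17}), the identification of $u_{N}^{(1)}$, and (\ref{eq3.7}). The one genuine variation is your derivation of $u_{N}^{(1)}=4I_{N}+\alpha_{N}^{(1)}G_{N}$: you subtract the order-$t$ identity from (\ref{eq4.14}) to get $\int(u_{N}^{(1)}-4I_{N})P\varphi\,d\mu=\alpha_{N}^{(1)}\varphi(N)$ for all $\varphi$, and then appeal to $\ker P=0$ on $S^{3}$. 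The paper instead substitutes $\varphi=u_{p}=4G_{p}$ directly into (\ref{eq4.14}) and reads off $u_{N}^{(1)}(p)$ using $u_{p}(N)=u_{N}(p)$ and Lemma~\ref{lem2.3}; this is more concrete and avoids both invoking $\ker P=0$ and worrying about which function space $u_{N}^{(1)}-4I_{N}$ lives in so that the distributional argument is licensed. Both routes work.

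Two small corrections. First, the ``self-adjointness cancellation'' you perform with $\varphi=\psi=G_{N}$ is vacuous: with $\varphi=\psi$ both sides of $\int P_{g+th}\varphi\cdot\psi\,d\mu_{g+th}=\int\varphi\,P_{g+th}\psi\,d\mu_{g+th}$ are the same expression, so no relation is extracted. You do not need it --- $\int G_{N}P_{g,h}^{(2)}G_{N}\,d\mu$ and $\int P_{g,h}^{(2)}G_{N}\cdot G_{N}\,d\mu$ are literally the same integral, as are the two $trh$-weighted terms; there is nothing to ``cancel.'' Second, there is a sign slip in your penultimate display: assembling the three terms with $u_{N}=4G_{N}$ and $u_{N}^{(1)}=4I_{N}+\alpha_{N}^{(1)}G_{N}$ (and killing the $\alpha_{N}^{(1)}$ piece with $\int G_{N}P_{g,h}^{(1)}G_{N}\,d\mu=0$) yields
\begin{equation*}
\nu^{(2)}(N,h)=+16\int_{S^{3}}\left(P_{g,h}^{(2)}G_{N}\cdot G_{N}+P_{g,h}^{(1)}G_{N}\cdot I_{N}+\tfrac{1}{2}P_{g,h}^{(1)}G_{N}\cdot G_{N}\,trh\right)d\mu,
\end{equation*}
not $-16$ times that integral; it is then (\ref{eq3.7}) that supplies the minus sign to give $-16\,II(N,N,h)$. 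As written, your displayed formula would evaluate to $+16\,II(N,N,h)$, contradicting the stated (and correct) conclusion.
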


Theorem \ref{thm1.2} follows from Proposition \ref{prop3.2} and \ref{prop4.2}%
. By symmetry we can assume $p=N$, then it follows from (\ref{eq4.16}) that%
\begin{eqnarray*}
\nu ^{\left( 1\right) }\left( N,h\right) &=&\int_{S^{3}}u_{N}P_{g,h}^{\left(
1\right) }u_{N}d\mu \\
&=&16\int_{S^{3}}G_{N}P_{g,h}^{\left( 1\right) }G_{N}d\mu \\
&=&0
\end{eqnarray*}%
by the proof of Proposition \ref{prop3.1}.

Next we note that (\ref{eq4.17}) implies%
\begin{eqnarray}
&&\nu ^{\left( 2\right) }\left( N,h\right)  \label{eq4.18} \\
&=&\int_{S^{3}}u_{N}P_{g,h}^{\left( 2\right) }u_{N}d\mu
+\int_{S^{3}}u_{N}^{\left( 1\right) }P_{g,h}^{\left( 1\right) }u_{N}d\mu +%
\frac{1}{2}\int_{S^{3}}u_{N}P_{g,h}^{\left( 1\right) }u_{N}\cdot trhd\mu . 
\notag
\end{eqnarray}%
To compute $u_{N}^{\left( 1\right) }$, we observe that (\ref{eq4.14}) implies%
\begin{equation*}
\int_{S^{3}}\left( u_{N}P_{g,h}^{\left( 1\right) }\varphi +u_{N}^{\left(
1\right) }P\varphi +\frac{1}{2}u_{N}P\varphi \cdot trh\right) d\mu =\alpha
_{N}^{\left( 1\right) }\varphi \left( N\right)
\end{equation*}%
for any $\varphi \in H^{2}\left( S^{3}\right) $. Take $\varphi =u_{p}=4G_{p}$%
, we see%
\begin{equation}
\int_{S^{3}}u_{N}P_{g,h}^{\left( 1\right) }u_{p}d\mu +4u_{N}^{\left(
1\right) }\left( p\right) +2u_{N}\left( p\right) trh\left( p\right) =\alpha
_{N}^{\left( 1\right) }u_{p}\left( N\right) .  \label{eq4.19}
\end{equation}%
Since%
\begin{equation*}
u_{p}\left( N\right) =4G_{p}\left( N\right) =4G_{N}\left( p\right)
=u_{N}\left( p\right) ,
\end{equation*}%
it follows from Lemma \ref{lem2.3} and (\ref{eq4.19}) that%
\begin{equation}
u_{N}^{\left( 1\right) }\left( p\right) =4I_{N}\left( p\right) +\frac{1}{4}%
\alpha _{N}^{\left( 1\right) }u_{N}\left( p\right) .  \label{eq4.20}
\end{equation}%
Hence%
\begin{eqnarray*}
&&\nu ^{\left( 2\right) }\left( N,h\right) \\
&=&16\int_{S^{3}}G_{N}P_{g,h}^{\left( 2\right) }G_{N}d\mu
+16\int_{S^{3}}P_{g,h}^{\left( 1\right) }G_{N}\cdot I_{N}d\mu
+8\int_{S^{3}}P_{g,h}^{\left( 1\right) }G_{N}\cdot G_{N}trhd\mu \\
&=&-16II\left( N,N,h\right) .
\end{eqnarray*}%
Here we have used $\int_{S^{3}}G_{N}P_{g,h}^{\left( 1\right) }G_{N}d\mu =0$
(which follows from the proof of Proposition \ref{prop3.1}) and (\ref{eq3.7}%
). Proposition \ref{prop4.2} follows.

\section{Appendix: Taylor expansion formula for $Q$ curvature of metrics
depending on a parameter}

Assume on a smooth three dimensional Riemannian manifold we have $\widetilde{%
g}=g+th$, then under a local orthonormal frame with respect to $g$, we have

\begin{equation}
\widetilde{g}_{ij}=g_{ij}+th_{ij},  \label{eqa.1}
\end{equation}%
and%
\begin{equation}
\widetilde{g}^{ij}=\delta _{ij}-th_{ij}+t^{2}h_{ij}^{2}+O\left( t^{3}\right)
.  \label{eqa.2}
\end{equation}%
Here $h^{2}$ is the tensor given by%
\begin{equation*}
h_{ij}^{2}=h_{ik}h_{jk}.
\end{equation*}%
The measure associated with $\widetilde{g}$ is given by%
\begin{equation}
d\widetilde{\mu }=\left[ 1+\frac{t}{2}trh+t^{2}\left( \frac{\left(
trh\right) ^{2}}{8}-\frac{\left\vert h\right\vert ^{2}}{4}\right) +O\left(
t^{3}\right) \right] d\mu .  \label{eqa.3}
\end{equation}%
Here%
\begin{equation*}
\left\vert h\right\vert ^{2}=\sum_{ij}\left( h_{ij}\right) ^{2}.
\end{equation*}%
The Christofel symbol satisfies%
\begin{eqnarray}
&&\widetilde{\Gamma }_{ij}^{k}-\Gamma _{ij}^{k}  \label{eqa.4} \\
&=&\frac{t}{2}\left( h_{ikj}+h_{jki}-h_{ijk}\right) -\frac{t^{2}}{2}\left(
h_{i\alpha j}+h_{j\alpha i}-h_{ij\alpha }\right) h_{k\alpha }+O\left(
t^{3}\right) .  \notag
\end{eqnarray}%
Note that the left hand side is a tensor. The Riemann curvature tensor
satisfies%
\begin{eqnarray}
\widetilde{R}_{ijk}^{l} &=&R_{ijk}^{l}+\frac{t}{2}\left(
h_{ilkj}+h_{jkli}+h_{klij}-h_{jlki}-h_{iklj}-h_{klji}\right)  \label{eqa.5}
\\
&&-\frac{t^{2}}{2}\left( h_{i\alpha kj}+h_{jk\alpha i}+h_{k\alpha
ij}-h_{j\alpha ki}-h_{ik\alpha j}-h_{k\alpha ji}\right) h_{l\alpha }  \notag
\\
&&+\frac{t^{2}}{4}\left( h_{i\alpha l}+h_{l\alpha i}-h_{il\alpha }\right)
\left( h_{j\alpha k}+h_{k\alpha j}-h_{jk\alpha }\right)  \notag \\
&&-\frac{t^{2}}{4}\left( h_{i\alpha k}+h_{k\alpha i}-h_{ik\alpha }\right)
\left( h_{j\alpha l}+h_{l\alpha j}-h_{jl\alpha }\right) +O\left(
t^{3}\right) .  \notag
\end{eqnarray}%
In particular, the Ricci tensor is given by%
\begin{eqnarray}
&&\widetilde{Rc}_{ij}  \label{eqa.6} \\
&=&Rc_{ij}+\frac{t}{2}\left( h_{ikjk}+h_{jkik}-\left( trh\right)
_{ij}-\Delta h_{ij}\right)  \notag \\
&&-\frac{t^{2}}{2}\left( h_{ikjl}+h_{jkil}-h_{klji}-h_{ijkl}\right) h_{kl} 
\notag \\
&&+\frac{t^{2}}{4}\left( h_{ikl}+h_{kli}-h_{ilk}\right) \left(
h_{jkl}+h_{klj}-h_{jlk}\right)  \notag \\
&&-\frac{t^{2}}{4}\left( h_{ikj}+h_{jki}-h_{ijk}\right) \left(
2h_{kll}-\left( trh\right) _{k}\right) +O\left( t^{3}\right)  \notag
\end{eqnarray}%
here the tensor $\Delta h$ is given by%
\begin{equation*}
\Delta h_{ij}=h_{ijkk}.
\end{equation*}%
The scalar curvature is given by%
\begin{eqnarray}
&&\widetilde{R}  \label{eqa.7} \\
&=&R+t\left( h_{ijij}-\Delta trh-Rc_{ij}h_{ij}\right)  \notag \\
&&+t^{2}\left( \Delta h_{ij}+\left( trh\right)
_{ij}-h_{ikjk}-h_{ikkj}\right) h_{ij}+\frac{t^{2}}{4}\sum_{ijk}\left(
h_{ikj}+h_{jki}-h_{ijk}\right) ^{2}  \notag \\
&&-\frac{t^{2}}{4}\sum_{i}\left( 2h_{ijj}-\left( trh\right) _{i}\right)
^{2}+t^{2}Rc_{ij}h_{ij}^{2}+O\left( t^{3}\right) .  \notag
\end{eqnarray}%
The Laplacian satisfies%
\begin{eqnarray}
&&\widetilde{\Delta }\varphi  \label{eqa.8} \\
&=&\Delta \varphi -\frac{t}{2}\left( 2h_{ijj}-\left( trh\right) _{i}\right)
\varphi _{i}-th_{ij}\varphi _{ij}  \notag \\
&&+\frac{t^{2}}{2}\left( 2h_{ikk}-\left( trh\right) _{i}\right)
h_{ij}\varphi _{j}+\frac{t^{2}}{2}\left( 2h_{ikj}-h_{ijk}\right)
h_{ij}\varphi _{k}+t^{2}h_{ij}^{2}\varphi _{ij}+O\left( t^{3}\right) . 
\notag
\end{eqnarray}

As a consequence we have%
\begin{eqnarray}
&&\widetilde{R}^{2}  \label{eqa.9} \\
&=&R^{2}+2tR\left( h_{ijij}-\Delta trh-Rc_{ij}h_{ij}\right)  \notag \\
&&+t^{2}\left( h_{ijij}-\Delta trh-Rc_{ij}h_{ij}\right) ^{2}+2t^{2}R\left(
\Delta h_{ij}+\left( trh\right) _{ij}-h_{ikjk}-h_{ikkj}\right) h_{ij}  \notag
\\
&&+\frac{t^{2}R}{2}\sum_{ijk}\left( h_{ikj}+h_{jki}-h_{ijk}\right) ^{2}-%
\frac{t^{2}R}{2}\sum_{i}\left( 2h_{ijj}-\left( trh\right) _{i}\right)
^{2}+2t^{2}R\cdot Rc_{ij}h_{ij}^{2}  \notag \\
&&+O\left( t^{3}\right)  \notag
\end{eqnarray}%
and%
\begin{eqnarray}
&&\left\vert \widetilde{Rc}\right\vert ^{2}  \label{eqa.10} \\
&=&\left\vert Rc\right\vert ^{2}+tRc_{ij}\left( 2h_{ikjk}-\left( trh\right)
_{ij}-\Delta h_{ij}\right) -2tRc_{ij}Rc_{ik}h_{jk}  \notag \\
&&+\frac{t^{2}}{4}\sum_{ij}\left( h_{ikjk}+h_{jkik}-\left( trh\right)
_{ij}-\Delta h_{ij}\right) ^{2}-t^{2}Rc_{ij}h_{kl}\left(
2h_{ikjl}-h_{klij}-h_{ijkl}\right)  \notag \\
&&-2t^{2}Rc_{ij}h_{ik}\left( h_{jlkl}+h_{kljl}-\left( trh\right)
_{jk}-\Delta h_{jk}\right)  \notag \\
&&+\frac{t^{2}}{2}Rc_{ij}\left( h_{ikl}+h_{kli}-h_{ilk}\right) \left(
h_{jkl}+h_{klj}-h_{jlk}\right)  \notag \\
&&-\frac{t^{2}}{2}Rc_{ij}\left( 2h_{ikj}-h_{ijk}\right) \left(
2h_{kll}-\left( trh\right) _{k}\right)
+2t^{2}Rc_{ij}Rc_{ik}h_{jk}^{2}+t^{2}Rc_{ij}Rc_{kl}h_{ik}h_{jl}  \notag \\
&&+O\left( t^{3}\right)  \notag
\end{eqnarray}%
and%
\begin{eqnarray}
&&\widetilde{\Delta }\widetilde{R}  \label{eqa.11} \\
&=&\Delta R+t\Delta \left( h_{ijij}-\Delta trh-Rc_{ij}h_{ij}\right) -\frac{t%
}{2}\left( 2h_{ijj}-\left( trh\right) _{i}\right) R_{i}-th_{ij}R_{ij}  \notag
\\
&&+t^{2}\Delta \left[ \left( \Delta h_{ij}+\left( trh\right)
_{ij}-h_{ikjk}-h_{ikkj}\right) h_{ij}\right] -t^{2}h_{ij}\left(
h_{klkl}-\Delta trh-Rc_{kl}h_{kl}\right) _{ij}  \notag \\
&&+\frac{t^{2}}{4}\Delta \sum_{ijk}\left( h_{ikj}+h_{jki}-h_{ijk}\right)
^{2}-\frac{t^{2}}{4}\Delta \sum_{i}\left( 2h_{ijj}-\left( trh\right)
_{i}\right) ^{2}  \notag \\
&&-\frac{t^{2}}{2}\left( 2h_{ijj}-\left( trh\right) _{i}\right) \left(
h_{klkl}-\Delta trh-Rc_{kl}h_{kl}\right) _{i}+t^{2}\Delta \left(
Rc_{ij}h_{ij}^{2}\right)  \notag \\
&&+\frac{t^{2}}{2}\left( 2h_{ikk}-\left( trh\right) _{i}\right) h_{ij}R_{j}+%
\frac{t^{2}}{2}\left( 2h_{ikj}-h_{ijk}\right)
h_{ij}R_{k}+t^{2}h_{ij}^{2}R_{ij}+O\left( t^{3}\right) .  \notag
\end{eqnarray}%
Recall%
\begin{equation}
Q=-\frac{1}{4}\Delta R-2\left\vert Rc\right\vert ^{2}+\frac{23}{32}R^{2}.
\label{eqa.12}
\end{equation}%
Plug in the formulas above we get%
\begin{eqnarray}
&&\widetilde{Q}  \label{eqa.13} \\
&=&Q-\frac{t}{4}\Delta \left( h_{ijij}-\Delta trh-Rc_{ij}h_{ij}\right)
-2tRc_{ij}\left( 2h_{ikjk}-\left( trh\right) _{ij}-\Delta h_{ij}\right) 
\notag \\
&&+\frac{23t}{16}R\left( h_{ijij}-\Delta trh-Rc_{ij}h_{ij}\right) +\frac{t}{8%
}\left( 2h_{ijj}-\left( trh\right) _{i}\right) R_{i}+\frac{t}{4}%
h_{ij}R_{ij}+4tRc_{ij}Rc_{ik}h_{jk}  \notag \\
&&-\frac{t^{2}}{4}\Delta \left[ \left( \Delta h_{ij}+\left( trh\right)
_{ij}-h_{ikjk}-h_{ikkj}\right) h_{ij}\right] +\frac{t^{2}}{4}h_{ij}\left(
h_{klkl}-\Delta trh-Rc_{kl}h_{kl}\right) _{ij}  \notag \\
&&-\frac{t^{2}}{16}\Delta \sum_{ijk}\left( h_{ikj}+h_{jki}-h_{ijk}\right)
^{2}+\frac{t^{2}}{16}\Delta \sum_{i}\left( 2h_{ijj}-\left( trh\right)
_{i}\right) ^{2}  \notag \\
&&+\frac{t^{2}}{8}\left( 2h_{ijj}-\left( trh\right) _{i}\right) \left(
h_{klkl}-\Delta trh-Rc_{kl}h_{kl}\right) _{i}-\frac{t^{2}}{4}\Delta \left(
Rc_{ij}h_{ij}^{2}\right)  \notag \\
&&-\frac{t^{2}}{2}\sum_{ij}\left( h_{ikjk}+h_{jkik}-\left( trh\right)
_{ij}-\Delta h_{ij}\right) ^{2}+2t^{2}Rc_{ij}h_{kl}\left(
2h_{ikjl}-h_{klij}-h_{ijkl}\right)  \notag \\
&&+4t^{2}Rc_{ij}h_{ik}\left( h_{jlkl}+h_{kljl}-\left( trh\right)
_{jk}-\Delta h_{jk}\right) +\frac{23t^{2}}{32}\left( h_{ijij}-\Delta
trh-Rc_{ij}h_{ij}\right) ^{2}  \notag \\
&&+\frac{23t^{2}}{16}R\left( \Delta h_{ij}+\left( trh\right)
_{ij}-h_{ikjk}-h_{ikkj}\right) h_{ij}-t^{2}Rc_{ij}\left(
h_{ikl}+h_{kli}-h_{ilk}\right) \left( h_{jkl}+h_{klj}-h_{jlk}\right)  \notag
\\
&&+t^{2}Rc_{ij}\left( 2h_{ikj}-h_{ijk}\right) \left( 2h_{kll}-\left(
trh\right) _{k}\right) +\frac{23t^{2}}{64}R\sum_{ijk}\left(
h_{ikj}+h_{jki}-h_{ijk}\right) ^{2}  \notag \\
&&-\frac{23t^{2}}{64}R\sum_{i}\left( 2h_{ijj}-\left( trh\right) _{i}\right)
^{2}-\frac{t^{2}}{8}\left( 2h_{ikk}-\left( trh\right) _{i}\right)
h_{ij}R_{j}-\frac{t^{2}}{8}\left( 2h_{ikj}-h_{ijk}\right) h_{ij}R_{k}  \notag
\\
&&-\frac{t^{2}}{4}%
h_{ij}^{2}R_{ij}-4t^{2}Rc_{ij}Rc_{ik}h_{jk}^{2}-2t^{2}Rc_{ij}Rc_{kl}h_{ik}h_{jl}+%
\frac{23t^{2}}{16}R\cdot Rc_{ij}h_{ij}^{2}+O\left( t^{3}\right) .  \notag
\end{eqnarray}%
Since the Paneitz operator can be written as%
\begin{equation}
P\varphi =\Delta ^{2}\varphi +4Rc_{ij}\varphi _{ij}-\frac{5}{4}R\Delta
\varphi +\frac{3}{4}\nabla R\cdot \nabla \varphi -\frac{1}{2}Q\varphi ,
\label{eqa.14}
\end{equation}%
calculation shows%
\begin{eqnarray}
&&P_{g,h}^{\left( 1\right) }\varphi  \label{eqa.15} \\
&=&-h_{ij}\left( \Delta \varphi \right) _{ij}-\Delta \left( h_{ij}\varphi
_{ij}\right) -\frac{1}{2}\left( 2h_{ijj}-\left( trh\right) _{i}\right)
\left( \Delta \varphi \right) _{i}-\frac{1}{2}\Delta \left[ \left(
2h_{ijj}-\left( trh\right) _{i}\right) \varphi _{i}\right]  \notag \\
&&+2\left( 2h_{ikjk}-\Delta h_{ij}-\left( trh\right) _{ij}\right) \varphi
_{ij}-8Rc_{ij}h_{ik}\varphi _{jk}+\frac{5}{4}Rh_{ij}\varphi _{ij}  \notag \\
&&-\frac{5}{4}\left( h_{ijij}-\Delta trh-Rc_{ij}h_{ij}\right) \Delta \varphi
-2Rc_{ij}\left( 2h_{ikj}-h_{ijk}\right) \varphi _{k}+\frac{5}{8}R\left(
2h_{ijj}-\left( trh\right) _{i}\right) \varphi _{i}  \notag \\
&&+\frac{3}{4}\left( h_{jkjk}-\Delta trh-Rc_{jk}h_{jk}\right) _{i}\varphi
_{i}-\frac{3}{4}h_{ij}R_{i}\varphi _{j}+\frac{1}{8}\Delta \left(
h_{ijij}-\Delta trh-Rc_{ij}h_{ij}\right) \cdot \varphi  \notag \\
&&-\frac{1}{8}h_{ij}R_{ij}\varphi -\frac{1}{16}\left( 2h_{ijj}-\left(
trh\right) _{i}\right) R_{i}\varphi +Rc_{ij}\left( 2h_{ikjk}-\Delta
h_{ij}-\left( trh\right) _{ij}\right) \varphi  \notag \\
&&-2Rc_{ij}Rc_{ik}h_{jk}\varphi -\frac{23}{32}R\left( h_{ijij}-\Delta
trh-Rc_{ij}h_{ij}\right) \varphi .  \notag
\end{eqnarray}%
In particular on standard $S^{3}$ we have%
\begin{eqnarray}
&&P_{g,h}^{\left( 1\right) }\varphi  \label{eqa.16} \\
&=&-h_{ij}\left( \Delta \varphi \right) _{ij}-\Delta \left( h_{ij}\varphi
_{ij}\right) -\frac{1}{2}\left( 2h_{ijj}-\left( trh\right) _{i}\right)
\left( \Delta \varphi \right) _{i}-\frac{1}{2}\Delta \left[ \left(
2h_{ijj}-\left( trh\right) _{i}\right) \varphi _{i}\right]  \notag \\
&&+2\left( 2h_{ikjk}-\Delta h_{ij}-\left( trh\right) _{ij}\right) \varphi
_{ij}-\frac{17}{2}h_{ij}\varphi _{ij}-\frac{5}{4}\left( h_{ijij}-\Delta
trh-2trh\right) \Delta \varphi  \notag \\
&&-\frac{1}{4}\left( 2h_{ijj}-\left( trh\right) _{i}\right) \varphi _{i}+%
\frac{3}{4}\left( h_{jkjk}-\Delta trh-2trh\right) _{i}\varphi _{i}+\frac{1}{8%
}\Delta \left( h_{ijij}-\Delta trh-2trh\right) \varphi  \notag \\
&&-\frac{5}{16}\left( h_{ijij}-\Delta trh-2trh\right) \varphi .  \notag
\end{eqnarray}

\subsection*{Acknowledgement}

The research of Yang is supported by NSF grant 1104536.

\end{document}